\providecommand{\tabularnewline}{\\}
\numberwithin{equation}{section}
\theoremstyle{plain}
\newtheorem{thm}{\protect\theoremname}[section]
  \theoremstyle{definition}
  \newtheorem{defn}[thm]{\protect\definitionname}
  \theoremstyle{definition}
  \newtheorem{example}[thm]{\protect\examplename}
  \theoremstyle{plain}
  \newtheorem{lem}[thm]{\protect\lemmaname}
  \theoremstyle{plain}
  \newtheorem{prop}[thm]{\protect\propositionname}
  \theoremstyle{remark}
  \newtheorem{rem}[thm]{\protect\remarkname}
  \theoremstyle{plain}
  \newtheorem{cor}[thm]{\protect\corollaryname}
\let\originalleft\left
\let\originalright\right
\renewcommand{\left}{\mathopen{}\mathclose\bgroup\originalleft}
\renewcommand{\right}{\aftergroup\egroup\originalright}
\subjclass[2020]{Primary 20-02, 20M32, 20N99; Secondary 20-03.} 
  \providecommand{\corollaryname}{Corollary}
  \providecommand{\definitionname}{Definition}
  \providecommand{\examplename}{Example}
  \providecommand{\lemmaname}{Lemma}
  \providecommand{\propositionname}{Proposition}
  \providecommand{\remarkname}{Remark}
\providecommand{\theoremname}{Theorem}
\begin{document}

\title[magnitudes, scalable monoids and quantity spaces]{magnitudes, scalable monoids \\
and quantity spaces}

\address{Dan Jonsson, University of Gothenburg, Gothenburg, Sweden.}

\email{dan.jonsson@gu.se}

\author{Dan Jonsson}
\begin{abstract}
In ancient Greek mathematics, magnitudes such as lengths were strictly
distinguished from numbers. In modern quantity calculus, a distinction
is made between quantities and scalars that serve as measures of quantities.
It can be argued that quantities should play a more prominent, independent
role in modern mathematics, as magnitudes earlier.

The introduction includes a sketch of the development and structure
of the pre-modern theory of magnitudes and numbers. Then, a scalable
monoid over a ring is defined and its basic properties are described.
Congruence relations on scalable monoids, direct and tensor products
of scalable monoids, subalgebras and homomorphic images of scalable
monoids, and unit elements of scalable monoids are also defined and
analyzed.

A quantity space is defined as a commutative scalable monoid over
a field, admitting a finite basis similar to a basis for a free abelian
group. The mathematical theory of quantity spaces forms the basis
of a rigorous quantity calculus and is developed with a view to applications
in metrology and foundations of physics.
\end{abstract}

\maketitle

\section{Introduction and historical background}

Equations such as $E=\tfrac{mv^{2}}{2}$ or $\frac{\partial T}{\partial t}=\kappa\frac{\partial^{2}T}{\partial x^{2}}$,
used to express physical laws, describe relationships between scalars,
commonly real numbers. An alternative interpretation is possible,
however. Since the scalars assigned to the variables in these equations
are numerical measures of certain quantities, the equations express
relationships between these quantities as well. For example, $E=\tfrac{mv^{2}}{2}$
can also be interpreted as describing a relation between an energy
$E$, a mass $m$ and a velocity $v$ \textendash{} three underlying
physical quantities, whose existence and properties do not depend
on scalars used to represent them. With this interpretation, though,
$\tfrac{mv^{2}}{2}$ and similar expressions will have meaning only
if operations on quantities, corresponding to operations on numbers,
are defined. In other words, an appropriate way of calculating with
quantities, a \emph{quantity calculus}, needs to be available.

In a useful survey \cite{BOER}, de Boer described the development
of quantity calculus until the late 20th century, starting with Maxwell's
\cite{MAXW} concept of a physical quantity $q$ comprised of a unit
quantity $\left[q\right]$ of the same kind as $q$ and a scalar $\left\{ q\right\} $
which is the measure of $q$ relative to $\left[q\right]$, so that
we can write $q=\left\{ q\right\} \left[q\right]$. Like Lodge \cite{LOD},
in a seminal article in \emph{Nature} 1888, and Wallot \cite{WAL},
in an influential article in \emph{Handbuch der Physik} 1926, de Boer
argued, however, that the physical quantity should be seen as a primitive
notion: \textcolor{black}{ontologically, the quantity precedes the
measure used to describe it, and the assignment $q=\left\{ q\right\} \left[q\right]$
can be used to specify a particular quantity but not to define the
notion of a quantity}%
\textcolor{black}{{} \cite[pp. 1--2]{JON1}.}

Actually, the roots of quantity calculus go far deeper in the history
of mathematics than to Wallot, Lodge, Maxwell or even other scientists
of the modern era, such as Fourier \cite{FOUR}; its origins can be
traced back to ancient Greek geometry and arithmetic, as codified
in Euclid's \emph{Elements \cite{EUCL}}. 

Of fundamental importance in the \emph{Elements} is the distinction
between \emph{numbers (multitudes)} and \emph{magnitudes}. The notion
of a number (\emph{arithmos}) is based on that of a ''unit'' or
''monad'' (\emph{monas}); a number is ''a multitude composed of
units''. Thus, a number is essentially a positive integer. (A collection
of units containing just one unit was not, in principle, considered
to be a multitude of units in Greek arithmetic, so strictly speaking
$1$ was not a number.) Numbers can be compared, added and multiplied,
and a smaller number can be subtracted from a larger one, but the
ratio of two numbers $m,n$ is not itself a number but just a pair
$m:n$ expressing relative size. (The ratio of integers is not necessarily
an integer.) Ratios can, however, be compared; $m:n=m':n'$ means
that $mn'=nm'$. A bigger number $m$ is said to be ''measured''
by a smaller number $k$ if $m=rk$ for some number $r$; a prime
number is a number that is not measured by any other number (or measured
only by $1$), and $m,n$ are relatively prime when there is no number
(except 1) measuring both. 

Magnitudes (\emph{megethos}), on the other hand, are phenomena such
as lengths, areas, volumes or times. Unlike numbers, magnitudes are
of different kinds\emph{, }and while the magnitudes of a particular
kind correspond loosely to numbers, making measurement of magnitudes
possible, the magnitudes form a continuum, and there are no distinguished
''unit magnitudes''. In Greek mathematics, magnitudes of the same
kind can be compared and added, and a smaller magnitude can be subtracted
from a larger one of the same kind, but magnitudes cannot, in general,
be multiplied or divided. One can form the ratio of two magnitudes
of the same kind, $p$ and $q$, but this is not a magnitude but just
a pair $p:q$ expressing relative size. A greater magnitude $q$ is
said to be measured by a smaller magnitude $u$ if there is a number
$n$ such that $q$ is equal to $u$ taken $n$ times; we may write
this as $q=n\times u$ here.

Remarkably, the first three propositions about magnitudes proved by
Euclid in the \emph{Elements} are, in the notation used here, 
\begin{gather*}
n\times(u_{1}\dotplus\cdots\dotplus u_{k})=n\times u_{1}\dotplus\cdots\dotplus n\times u_{k},\\
(n_{1}+\cdots+n_{k})\times u=n_{1}\times u\dotplus\cdots\dotplus n_{k}\times u,\qquad m\times(n\times u)=(mn)\times u,
\end{gather*}
where $m,n,n_{1},\ldots,n_{k}$ are numbers (\emph{arithmoi}), $u$
is a magnitude, $u_{1},\ldots,u_{k}$ are magnitudes of the same kind,
$\dotplus$ denotes the sum of magnitudes of the same kind, and $\times$
denotes the product of a number and a magnitude. As shown in Section
\ref{subsec:Commensurability-classes-with}, these identities are
fundamental in modern quantity calculus as well.

If $p$ and $q$ are magnitudes of the same kind, and there is some
magnitude $u$ of this kind and some numbers $m,n$ such that $p=m\times u$
and $q=n\times u$, then $p$ and $q$ are said to be ''commensurable''.
The ratio of magnitudes $p:q$ can then be represented by the ratio
of numbers $m:n$, assumed to be unique (unlike the two numbers specifying
the ratio). %
However, magnitudes may also be ''relatively prime''; it may happen
that $p:q$ cannot be expressed as $m:n$ for any numbers $m,n$ because
there are no $m,n,u$ such that $p=m\times u$ and $q=n\times u$.
In view of the Pythagorean philosophical conviction of the primacy
of numbers, the discovery of examples of such ''incommensurable''
magnitudes created a deep crisis in early Greek mathematics \cite{HASS},
a crisis that also affected the foundations of geometry. 

If ratios of \emph{arithmoi} do not always suffice to represent ratios
of magnitudes, it seems that it would not always be possible to express
in terms of \emph{arithmoi} the fact that two ratios of magnitudes
are equal, as are the ratios of the lengths of corresponding sides
of similar triangles. This difficulty was resolved by Eudoxos, who
realized that a ''proportion'', that is, a relation among magnitudes
of the form \emph{''}$\,p$~is to $q$ as $p'$ is to $q'$\emph{''},
conveniently denoted $p\vcentcolon q\dblcolon p'\vcentcolon q'$,
can be defined numerically even if there is no pair of ratios of \emph{arithmoi}
$m:n$ and $m':n'$ corresponding to $p:q$ and $p':q'$, respectively,
so that $p:q\dblcolon p':q'$ cannot be inferred from $m:n=m':n'$.
Specifically, as described in Book V of the \emph{Elements}, Eudoxos
invented an ingenious indirect way of determining if $p\vcentcolon q\dblcolon p'\vcentcolon q'$
in terms of nothing but \emph{arithmoi} by means of a construction
similar to the Dedekind cut \cite{HASS}. Using modern terminology,
one can say that Eudoxos defined an equivalence relation $\!\dblcolon$
between pairs of magnitudes of the same kind in terms of positive
integers, and as a consequence it became possible to conceptualize
in terms of \emph{arithmoi} not only ratios of magnitudes corresponding
to rational numbers but also ratios of magnitudes corresponding to
irrational numbers. Eudoxos thus reconciled the continuum of magnitudes
with the discrete \emph{arithmoi}, but in retrospect this feat reduced
the incentive to rethink the Greek notion of number, to generalize
the \emph{arithmoi}.

To summarize, Greek mathematicians used two notions of muchness, and
built a theoretical system around each notion. These systems were
connected by relationships of the form $q=n\times u$, where $q$
is a magnitude, $n$ a number and $u$ a magnitude of the same kind
as $q$, foreboding from the distant past Maxwell's quantity formula
$q=\left\{ q\right\} \left[q\right]$, although Euclid wisely did
not propose to define magnitudes in terms of units and numbers. %
{} 

\medskip{}

The modern theory of numbers dramatically extends the theory of numbers
in the \emph{Elements}. Many types of numbers other than positive
integers have been added, and the notion of a number as an element
of an algebraic system has come to the forefront. The modern notion
of number was not developed by a straight-forward extension of the
concept of \emph{arithmos}, however; the initial development of the
new notion of number during the Renaissance was strongly inspired
by the ancient theory of magnitudes.

The beginning of the Renaissance saw renewed interest in the classical
Greek theories of magnitudes and numbers as known from Euclid's \emph{Elements},
but later these two notions gradually fused into that of a real number.
Malet \cite{MAL} remarks:
\begin{quotation}
{\footnotesize{}As far as we know, not only was the neat and consistent
separation between the Euclidean notions of numbers and magnitudes
preserved in Latin medieval translations {[}...{]}, but these notions
were still regularly taught in the major schools of Western Europe
in the second half of the 15th century. By the second half of the
17th century, however, the distinction between the classical notions
of (natural) numbers and continuous geometrical magnitudes was largely
gone, as were the notions themselves. {[}pp. 64\textendash 65{]}}{\footnotesize \par}
\end{quotation}
The force driving this transformation was the need for a continuum
of numbers as a basis for computation; the discrete \emph{arithmoi}
were not sufficient. As magnitudes of the same kind form a continuum,
the idea emerged that numbers should be regarded as an aspect of magnitudes.
''Number is to magnitude as wetness is to water'' said Stevin in
\emph{L'Arithmétique} \cite{STEV}, published 1585, and defined a
number as ''that by which one can tell the quantity of anything''
(cela, par lequel s\textquoteright explique la quantité de chascune
chose) {[}Definition II{]}. Thus, numbers were seen to form a continuum
by virtue of their intimate association with magnitudes.

Stevin's definition of a number is rather vague, and it is difficult
to see how a magnitude can be associated with a definite number, considering
that the numerical measure of a magnitude depends on a choice of a
unit magnitude. The notion of number was, however, refined during
the 17th century. In \emph{La Geometrie} \cite{DESC}, where Descartes
laid the groundwork for analytic geometry, he implicitly identified
numbers with \emph{ratios} of two magnitudes, namely lengths of line
segments, one of which was considered to have unit length, and in
\emph{Universal Arithmetick} \cite{NEWT} Newton, who had studied
both Euclid and Descartes, defined a number as follows:
\begin{quote}
{\footnotesize{}By }\emph{\footnotesize{}Number}{\footnotesize{} we
mean, not so much a Multitude of Unities, as the abstracted }\emph{\footnotesize{}Ratio}{\footnotesize{}
of any Quantity, to another Quantity of the same Kind, which we take
for Unity. {[}p. 2{]}}{\footnotesize \par}
\end{quote}
By assigning the number 1 to a unit quantity, the representation of
quantities by numbers is normalized, addressing a problem with Stevin's
definition. Also, a ratio of quantities of the same kind is a ''dimensionless''
quantity. Systems of such quantities contain a canonical unit quantity
$\boldsymbol{1}$, and addition, subtraction, multiplication and division
of dimensionless quantities yield dimensionless quantities. Hence,
a number and the corresponding dimensionless quantity are quite similar,
though Newton hints at a difference by calling numbers ''abstracted''
ratios of quantities.

Magnitudes, or ''dimensionful'' quantities, were thus needed only
as a scaffolding for the new notion of numbers, and when this notion
had been established its origins fell into oblivion and magnitudes
fell out of fashion. The tradition from Euclid paled away, but the
idea that numbers specify quantities relative to other quantities
remained, as in \cite{EUL}. A new theory of quantities originated
from this idea.

\medskip{}

While the Greek theory of magnitudes derived from geometry, the new
theory of quantities found applications in mathematical physics, a
branch of science that emerged in the 18th century. In \emph{The Analytic
Theory of Heat} \cite{FOUR2}, published in 1822 as \emph{Théorie
analytique de la Chaleur}, Fourier explains how physical quantities
relate to the numbers in his equations:
\begin{quotation}
{\small{}In order to measure these quantities and express them numerically,
they must be compared with different kinds of units, five in number,
namely, the unit of length, the unit of time, that of temperature,
that of weight, and finally the unit which serves to measure quantities
of heat. {[}pp. 126\textendash 127{]}}{\small \par}
\end{quotation}
We recognize here the ideas that there are quantities of different
kinds and that the number associated with a quantity depends on the
choice of a unit quantity of the same kind. 

Using the modern notion of, for example, a real number, we can generalize
relationships of the form $q=n\times u$, where $n$ is an \emph{arithmos}
and $u$ is a magnitude that measures (divides) $q$, to relationships
of the form $q=\mu\cdot u$, where $u$ is a freely chosen unit quantity
of the same kind as $q$, and $\mu$ is the measure of $q$ relative
to $u$, a number specifying the size of $q$ compared to $u$. If
$q=\mu\cdot u$ then $\mu$ is determined by $q$ and $u$, and we
may write $\mu=f\left(q,u\right)$ as $\mu=q/u$.

Fourier realized that the measure of a quantity may be defined in
terms of measures of other quantities, in turn dependent on the units
for these quantities. For example, the measure of a velocity depends
on a unit of length $u_{\ell}$ and a unit of time $u_{t}$ since
a velocity is defined in terms of a length and a time, and the measure
of an area indirectly depends on a unit of length $u_{\ell}$. 

Formally, let the measure $\mu_{v}$ of a velocity $v$ relative to
$u_{v}$ be given by $\mu_{v}=F\left(\mu_{\ell},\mu_{t}\right)=F\left(\ell/u_{\ell},t/u_{t}\right)$,
where $F\left(x,y\right)=xy^{-1}$, and let the measure $\mu_{a}$
of the area $a$ of a rectangle relative to $u_{a}$ be given by $\mu_{a}=G\left(\mu_{\ell},\mu_{w}\right)=G\left(\ell/u_{\ell},w/u_{\ell}\right)$,
where $G\left(x,y\right)=xy$. Generalizing the magnitude identity
$m\times\left(n\times u\right)=mn\times u$, we have $M\cdot\left(N\cdot u\right)=MN\cdot u$
for any real numbers $M,N$. Thus, if $q=\mu\cdot u$ and $M>0$ then
$q=\left(M\mu M^{-1}\right)\cdot u=M\mu\cdot\left(M^{-1}\cdot u\right)$,
so $M\mu=q/\left(M^{-1}\cdot u\right)$, so it follows from the definitions
of $F$ and $G$ that, for any non-zero numbers $L,T$,
\begin{gather*}
\mu_{v}'=F\left(\ell/\left(L^{-1}\cdot u_{\ell}\right),t/\left(T^{-1}\cdot u_{t}\right)\right)=F\left(L\mu_{\ell},T\mu_{t}\right)=LT^{-1}F\left(\mu_{\ell},\mu_{t}\right)=LT^{-1}\mu_{v},\\
\mu_{a}'=G\left(\ell/\left(L^{-1}\cdot u_{\ell}\right),w/\left(L^{-1}\cdot u_{\ell}\right)\right)=G\left(L\mu_{\ell},L\mu_{w}\right)=L^{2}G\left(\mu_{\ell},\mu_{w}\right)=L^{2}\mu_{a}.
\end{gather*}

The two equations show how the measures $\mu_{v}$ and $\mu_{a}$
are affected by a change of units $u_{\ell}\mapsto L\cdot u_{\ell}$
and $u_{t}\mapsto T\cdot u_{t}$. Reasoning similarly \cite[pp. 128--130]{FOUR2},
Fourier pointed out that quantity terms can be equal or combined by
addition or subtraction only if they agree with respect to each \emph{exposant
de dimension}, having identical patterns of exponents in expressions
such as $LT^{-1}$, $LT^{-2}$ or $L^{2}$, since otherwise the validity
of numerical equations corresponding to quantity equations would depend
on an arbitrary choice of units. He thus introduced the principle
of dimensional homogeneity for equations that relate quantities.

Note that if $q=\mu\cdot u$ then $M\cdot q=M\cdot\left(\mu\cdot u\right)=M\mu\cdot u$,
so $\left(M\cdot q\right)/u=M\mu=M\left(q/u\right)$. Thus, in a sense
turning Fourier's argument around, we also have
\begin{gather*}
\mu_{v}'\!=\!F\left(\left(L\cdot\ell\right)/u_{\ell},\left(T\cdot t\right)/u_{t}\right)\!=\!F\left(L\left(\ell/u_{\ell}\right),T\left(t/u_{t}\right)\right)\!=\!LT^{-1}F\left(\ell/u_{\ell},t/u_{t}\right)\!=\!LT^{-1}\mu_{v},\\
\mu_{a}'\!=\!G\left(\left(L\cdot\ell\right)/u_{\ell},\left(L\cdot w\right)/u_{\ell}\right)\!=\!G\left(L\left(\ell/u_{\ell}\right),L\left(w/u_{\ell}\right)\right)\!=\!L^{2}G\left(\ell/u_{\ell},w/u_{\ell}\right)\!=\!L^{2}\mu_{a}.
\end{gather*}
These equations show how $\mu_{v}$ and $\mu_{a}$ are affected when
quantities change according to $\ell\mapsto L\cdot\ell$, $t\mapsto T\cdot t$.
For any fixed units $u_{\ell}$ and $u_{t}$, we can express this
as
\begin{gather*}
\varPhi\left(L\cdot\ell,T\cdot t\right)=LT^{-1}\varPhi\left(\ell,t\right),\\
\varGamma\left(L\cdot\ell,L\cdot w\right)=L^{2}\varGamma\left(\ell,w\right),
\end{gather*}
where $\varPhi$ and $\varGamma$ are the quantity-valued functions
given by $\varPhi\left(\ell,t\right)=F\left(\ell/u_{\ell},t/u_{t}\right)\cdot u_{v}$
and $\varGamma\left(\ell,w\right)=G\left(\ell/u_{\ell},w/u_{\ell}\right)\cdot u_{a}$,
respectively; note that $u_{v}$ and $u_{a}$ are also fixed since
they depend on $u_{\ell}$ and $u_{t}$.

The bilinearity properties of $\varPhi$ and $\varGamma$ suggest
that we write $\varPhi\left(\ell,t\right)$ as $\alpha\ell t^{-1}$
and $\varGamma\left(\ell,w\right)$ as $\beta\ell w$, where $\alpha$
and $\beta$ are numerical constants. Generalizing this heuristic
argument, we may introduce the idea that quantities of the same or
different kinds can be multiplied and divided, suggesting that we
can form arbitrary expressions of the form $\mu\prod_{i=1}^{n}q_{i}^{k_{i}},$
where $\mu$ is any number, $q_{i}$ are quantities and $k_{i}$ are
integers, thus coming close to the quantity calculus set out below.
Note, however, that Fourier did not actually define multiplication
or division of quantities as such. This came later, with Lodge \cite{LOD},
Wallot \cite{WAL} and others.

In retrospect, one may say that Fourier reinvented magnitudes as proto\-quantities
and extended the range of applications. While Fourier reasoned in
terms of multiplication and division of \emph{measures} of quantities,
he made a clear distinction between a quantity and its measure relative
to a unit, this measure being a real number rather than an \emph{arithmos},
he distinguished different kinds of quantities, and he considered
new kinds of quantities such as temperatures and amounts of heat.
Essential elements of a modern quantity calculus treating general
quantities as mathematical objects (almost) as real as numbers were
thus recognized early in the 19th century. 

Subsequent progress in this area of mathematics has not been fast
and straight-forward, however. A Euclidean synthesis did not emerge;
in his survey from 1994 de Boer concluded that ''a satisfactory axiomatic
foundation for the quantity calculus'' had not yet been formulated
\cite{BOER}.\medskip{}

Gowers \cite{GOV1} points out that many mathematical objects are
not defined directly by describing their essential properties, but
indirectly by \emph{construction-definitions}, specifying constructions
that can be shown to have these properties. For example, an ordered
pair $\left(x,y\right)$ may be defined by a construction-definition
as a set $\left\{ x,\left\{ y\right\} \right\} $; it can be shown
that this construction has the required properties, namely that $\left(x,y\right)=\left(x',y'\right)$
if and only if $x=x'$ and $y=y'$. Many contemporary formalizations
of the notion of a quantity (e.g., \cite{CARL,LAN}) use definitions
relying on constructions, often defining quantities in terms of scalar-unit
pairs%
{} in the tradition from Maxwell. (See also the survey in Appendix B.)
However, this is rather like defining a vector as a coordinates-basis
pair rather than as an element of a vector space, the modern definition.

Although magnitudes are illustrated by line segments in the \emph{Elements},
the notion of a magnitude is abstract and general. Remarkably, Euclid,
following Eudoxos, dealt with this notion in a very modern way. While
Euclid carefully defined other important objects such as points, lines
and numbers in terms of inherent properties, there is no statement
about what a magnitude \emph{''$\,$is''}. Instead, magnitudes are
characterized by how they relate to other magnitudes through their
roles in a system of magnitudes, to paraphrase Gowers \cite{GOW2}. 

In the same spirit, that of modern algebra, quantities are defined
in this article simply as elements of a ''quantity space''. Thus,
the focus is moved from individual quantities and operations on them
to the systems to which the quantities belong, meaning that the notion
of quantity calculus will give way to that of a quantity space. This
article considers the notion of a quantity space introduced in \cite{JON1}
and developed further in \cite{JON3}.

In the conceptual framework of universal algebra, a quantity space
is just a special \emph{scalable monoid} $\left(X,\mathsf{\ast},(\omega{}_{\lambda})_{\lambda\in R},1_{X}\right)$,
where $X$ is the underlying set of the algebra, $\left(X,\mathsf{*},1_{X}\right)$
is a monoid, $R$ is a fixed ring and every $\omega_{\lambda}$ is
unary operation on $X$. Writing $\ast\left(x,y\right)$ as $xy$
and denoting $\omega_{\lambda}\left(x\right)$ by $\lambda\cdot x$,
we have $1\cdot x=x$, $\lambda\cdot\left(\kappa\cdot x\right)=\lambda\kappa\cdot x$
and $\lambda\cdot xy=\left(\lambda\cdot x\right)y=x\left(\lambda\cdot y\right)$
for all $\lambda\in R$, $x,y\in X$. 

The relation $\sim$ on a scalable monoid $X$ defined by $x\sim y$
if and only if $\alpha\cdot x=\beta\cdot y$ for some $\alpha,\beta\in R$
is a congruence on $X$, so $X$ is partitioned into corresponding
\textcolor{black}{equivalence classes.} %
There is no global operation $\left(x,y\right)\mapsto x+y$ defined
on $X$, but within each equivalence class that contains a ''unit
element'' addition of its elements is induced by the addition in
$R$ (see Section \ref{subsec:Commensurability-classes-with}), and
multiplication of equivalence classes is induced by the multiplication
of elements of $X$ (see Section \ref{subsec:13}).

\emph{Quantity spaces} are to scalable monoids as vector spaces are
to modules. Specifically, a quantity space $Q$ is a\textcolor{cyan}{\emph{
}}\textcolor{black}{commutative} scalable monoid over a field, such
that there exists a finite basis for $Q$, similar to a basis for
a free abelian group. As noted, quantities are just elements of quantity
spaces, and dimensions are equivalence classes in quantity spaces.

\textcolor{black}{The remainder of this article is divided into two
main sections, namely Section 2 which deals with scalable monoids
and Section 3 where scalable monoids are specialized to quantity spaces.
There are also two Appendices, one of which relates the theory presented
here to contemporary research on quantity calculus.\pagebreak{}}

\section{\label{sec:2}Scalable monoids}

A scalable monoid is a monoid whose elements can be multiplied by
elements of a ring, and where multiplication in the monoid, multiplication
in the ring, and multiplication of monoid elements by ring elements
are compatible operations.

Scalable monoids are formally defined and compared to rings and modules
in Section \ref{s11}, and some basic facts about them are presented
in Section \ref{s12}. Sections \ref{subsec:13} and \ref{subsec:Quotients-of-scalable}
are concerned with congruences on scalable monoids and related notions
such as commensurability, orbitoids,%
{} homomorphisms and quotient algebras, while direct and tensor products
of scalable monoids are defined in Section \ref{s16}. Scalable monoids
with unit elements are investigated in Sections \ref{subsec:Commensurability-classes-with}
and \ref{s25}. In particular, addition of elements in the same equivalence\textcolor{brown}{{}
}%
class is defined, and coherent systems of unit elements are discussed.

\subsection{\label{s11}Mathematical background, main definition and simple examples}

A unital associative algebra $X$ over a (unital, associative but
not necessarily commutative) ring $R$ can be defined as a set, also
denoted $X$, with three operations: 
\begin{enumerate}
\item \emph{addition} of elements of $X$, a binary operation $+:\left(x,y\right)\mapsto x+y$
on $X$ such that $X$ equipped with $+$ is an abelian group; 
\item \emph{multiplication} of elements of $X$, a binary operation $\ast:\left(x,y\right)\mapsto xy$
on $X$ such that $X$ equipped with $\ast$ is a monoid; 
\item \emph{scalar multiplication} of elements of $X$ by elements of $R,$
a monoid action $\left(\alpha,x\right)\mapsto\alpha\cdot x$ where
the multiplicative monoid of $R$ acts on $X$ so that $1\cdot x=x$
and $\alpha\cdot\left(\beta\cdot x\right)=\alpha\beta\cdot x$ for
all $\alpha,\beta\in R$ and $x\in X$. 
\end{enumerate}
There are identities specifying a link between each pair of operations: 
\begin{enumerate}
\item[(a)] addition and multiplication of elements of $X$ are linked by the
distributive laws $x\left(y+z\right)=xy+xz$ and $\left(x+y\right)z=xz+yz$; 
\item[(b)] addition of elements of $X$ or $R$ and scalar multiplication of
elements of $X$ by elements of $R$ are linked by the distributive
laws $\alpha\cdot\left(x+y\right)=\alpha\cdot x+\alpha\cdot y$ and
$\left(\alpha+\beta\right)\cdot x=\alpha\cdot x+\beta\cdot x$; 
\item[(c)] multiplication of elements of $X$ and scalar multiplication of elements
of $X$ by elements of $R$ are linked by the bilinearity laws $\alpha\cdot xy=\left(\alpha\cdot x\right)y$
and $\alpha\cdot xy=x\left(\alpha\cdot y\right)$.
\end{enumerate}
Related algebraic structures can be obtained from unital associative
algebras by removing one of the operations (1) \textendash{} (3) and
hence the links between the removed operation and the two others.
Two cases are very familiar: a\emph{ }ring has only addition and multiplication
of elements of $X$, linked as described in (a), and a\emph{ }(left)
module has only addition of elements of $X$ and scalar multiplication
of elements of $X$ by elements of $R$, linked as described in (b).
The question arises whether it would be meaningful and useful to define
an ``algebra without an additive group'', with only multiplication
of elements of $X$ and scalar multiplication of elements of $X$
by elements of $R$, linked as described in (c). 

The answer is affirmative. It turns out that this notion, a ''scalable
monoid'', formally related to rings and in particular modules, makes
sense mathematically and is remarkably well suited for modeling systems
of quantities. The ancient \emph{arithmos-megethos} pair of notions
receives a modern interpretation: while numbers can be formalized
as elements of rings, typically fields, quantities can be formalized
as elements of scalable monoids, specifically quantity spaces.
\begin{defn}
\label{thm:def1}Let $R$ be a (unital, associative) ring. A \emph{scalable
monoid} \emph{over} $R$ is a monoid $X$ equipped with a \emph{scaling
action} 
\[
\omega:R\times X\rightarrow X,\qquad\left(\alpha,x\right)\mapsto\alpha\cdot x,
\]
such that $1\cdot x=x$, $\alpha\cdot\left(\beta\cdot x\right)=\alpha\beta\cdot x$
and $\alpha\cdot xy=\left(\alpha\cdot x\right)y=x\left(\alpha\cdot y\right)$. 
\end{defn}

We denote the identity element of $X$ by $1_{\!X}$, and set $x^{0}=1_{\!X}$
for any $x\in X$. An \emph{invertible} element of a scalable monoid
$X$ is an element $x\in X$ that has a (necessarily unique) \emph{inverse}
$x^{-1}\in X$ such that $xx^{-1}=x^{-1}x=1_{\!X}$.

It is easy to verify that the \emph{trivial scaling action} of a ring
$R$ on a monoid $X$ defined by $\lambda\cdot x=x$ for all $\lambda\in R$
and $x\in X$ is indeed a scaling action according to Definition \ref{thm:def1}.
We call a monoid equipped with a trivial scaling action a \emph{trivially
scalable} monoid. A scalable monoid of this kind is essentially just
a monoid, since the operation $\left(\lambda,x\right)\mapsto\lambda\cdot x$
can be disregarded in this case. 
\begin{example}
A \emph{trivial} scalable monoid is a trivial monoid $\left\{ 1_{\!X}\right\} $
with a trivial scaling action.
\end{example}

\begin{example}
\label{ex1-1}Let $M\left(n\right)$ be the multiplicative monoid
of all $n\times n$ matrices with entries in $\mathbb{R}$. Then $M\left(n\right)$
is a scalable monoid over the corresponding matrix ring $R\left(n\right)$,
with the scaling action defined by $\mathbf{A}\cdot\mathbf{X}=\left(\det\mathbf{A}\right)\mathbf{X}$.
\end{example}

\begin{example}
\label{ex1}Let $R\left\llbracket x_{1};\ldots;x_{n}\right\rrbracket $
denote the set of all monomials of the form
\[
\lambda x_{1}^{k_{1}}\ldots x_{n}^{k_{n}},
\]
where $R$ is a commutative ring, $\lambda\in R$, $x_{1},\ldots,x_{n}$
are uninterpreted symbols and $k_{1},\ldots,k_{n}$ are non-negative
integers. We can define the operations $\left(s,t\right)\mapsto st$,
$\left(\alpha,t\right)\mapsto\alpha\cdot t$ and $\left(\right)\mapsto1_{R\left\llbracket x_{1};\ldots;x_{n}\right\rrbracket }$
on $R\left\llbracket x_{1};\ldots;x_{n}\right\rrbracket $ by setting
\begin{gather*}
\left(\lambda x_{1}^{j_{1}}\ldots x_{n}^{j_{n}}\right)\left(\kappa x_{1}^{k_{1}}\ldots x_{n}^{k_{n}}\right)=\left(\lambda\kappa\right)x_{1}^{\left(j_{1}+k_{1}\right)}\ldots x_{n}^{\left(j_{n}+k_{n}\right)},\\
\alpha\cdot\lambda x_{1}^{k_{1}}\ldots x_{n}^{k_{n}}=\left(\alpha\lambda\right)x_{1}^{k_{1}}\ldots x_{n}^{k_{n}},\\
1_{R\left\llbracket x_{1};\ldots;x_{n}\right\rrbracket }=1x_{1}^{0}\ldots x_{n}^{0}.
\end{gather*}
$R\left\llbracket x_{1};\ldots;x_{n}\right\rrbracket $ equipped with
these operations is a commutative scalable monoid over $R$.
\end{example}

\subsection{\label{s12}Some basic facts about scalable monoids}

A not necessarily commutative scalable monoid over a not necessarily
commutative ring nevertheless exhibits certain commutativity properties
as described in the following useful lemma:
\begin{lem}
\label{thm:lem1}Let $X$ be a scalable monoid over $R$. For any
$x,y\in X$ and $\alpha,\beta\in R$ we have 
\[
\left(\alpha\cdot x\right)\left(\beta\cdot y\right)=\alpha\beta\cdot xy,\qquad\alpha\beta\cdot x=\alpha\cdot\left(\beta\cdot x\right)=\beta\cdot\left(\alpha\cdot x\right)=\beta\alpha\cdot x.
\]
\end{lem}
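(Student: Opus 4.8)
The plan is to derive both assertions directly from the three defining identities in Definition \ref{thm:def1}: the unit law $1\cdot x=x$, the compatibility law $\alpha\cdot(\beta\cdot x)=\alpha\beta\cdot x$, and the two bilinearity laws $\alpha\cdot xy=(\alpha\cdot x)y=x(\alpha\cdot y)$. I would first prove the product identity $(\alpha\cdot x)(\beta\cdot y)=\alpha\beta\cdot xy$, and then obtain the second chain of equalities as a corollary by specializing one factor to $1_{\!X}$.

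For the product identity I would peel off the two scalars one at a time. Reading the first bilinearity law as $(\alpha\cdot x)z=\alpha\cdot(xz)$ with $z=\beta\cdot y$ gives $(\alpha\cdot x)(\beta\cdot y)=\alpha\cdot\bigl(x(\beta\cdot y)\bigr)$; then reading the second bilinearity law as $x(\beta\cdot y)=\beta\cdot(xy)$ rewrites this as $\alpha\cdot(\beta\cdot(xy))$; finally the compatibility law collapses it to $\alpha\beta\cdot xy$. This establishes the first asserted identity.

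The crucial observation, and the only genuinely non-routine point, is that the same expression $(\alpha\cdot x)(\beta\cdot y)$ can be reduced in the opposite order, extracting $\beta$ before $\alpha$. Using the second bilinearity law in the form $w(\beta\cdot y)=\beta\cdot(wy)$ with $w=\alpha\cdot x$, then the first bilinearity law as $(\alpha\cdot x)y=\alpha\cdot(xy)$, and then compatibility, I obtain $(\alpha\cdot x)(\beta\cdot y)=\beta\alpha\cdot xy$. Since both reductions compute the same monoid element, I conclude $\alpha\beta\cdot xy=\beta\alpha\cdot xy$ for all $x,y\in X$. Because $R$ is not assumed commutative, this is a genuine constraint forced by the bilinearity axioms rather than anything about $R$, and this two-order computation is where the whole lemma lives.

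With that in hand the second chain is immediate. Setting $y=1_{\!X}$ in $\alpha\beta\cdot xy=\beta\alpha\cdot xy$ and using the unit law yields $\alpha\beta\cdot x=\beta\alpha\cdot x$. The compatibility law gives $\alpha\cdot(\beta\cdot x)=\alpha\beta\cdot x$ and $\beta\cdot(\alpha\cdot x)=\beta\alpha\cdot x$, so all four expressions coincide:
\[
\alpha\beta\cdot x=\alpha\cdot(\beta\cdot x)=\beta\cdot(\alpha\cdot x)=\beta\alpha\cdot x.
\]
I expect no technical obstacle beyond keeping careful track of which bilinearity law is applied in each reduction; the entire content is the reduction of $(\alpha\cdot x)(\beta\cdot y)$ in two different orders.
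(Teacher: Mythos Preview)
Your proof is correct and follows essentially the same approach as the paper: both derive the first identity by peeling off the scalars via the bilinearity laws and compatibility, and both obtain the commutativity $\alpha\beta\cdot x=\beta\alpha\cdot x$ by exploiting that scalars can be moved across a monoid product in either order. The only cosmetic difference is that you compute $(\alpha\cdot x)(\beta\cdot y)$ in two orders and then specialize $y=1_{\!X}$, whereas the paper inserts $1_{\!X}$ first (writing $x=1_{\!X}x$) and then invokes the already-proved first identity; the underlying mechanism is identical.
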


\begin{proof}
By Definition \ref{thm:def1}, 
\[
\left(\alpha\cdot x\right)\left(\beta\cdot y\right)=\alpha\cdot x\left(\beta\cdot y\right)=\alpha\cdot\left(\beta\cdot xy\right)=\alpha\beta\cdot xy,
\]
\begin{align*}
\alpha\beta\cdot x & =\alpha\cdot\left(\beta\cdot x\right)=\alpha\cdot\left(\beta\cdot1_{\!X}x\right)=\alpha\cdot\left(\beta\cdot1_{\!X}\right)x=\left(\beta\cdot1_{\!X}\right)\left(\alpha\cdot x\right)\\
 & =\beta\alpha\cdot1_{\!X}x=\beta\alpha\cdot x=\beta\cdot\left(\alpha\cdot x\right),
\end{align*}
 where the first identity is used in the proof of the second.
\end{proof}
For example, $\left(\left(\det\mathbf{A}\right)\mathbf{X}\right)\left(\left(\det\mathbf{B}\right)\mathbf{Y}\right)=\left(\det\mathbf{A}\mathbf{B}\right)\mathbf{XY}$
is obviously the identity $\left(\mathbf{A}\cdot\mathbf{X}\right)\left(\mathbf{B}\cdot\mathbf{Y}\right)=\mathbf{AB}\cdot\mathbf{XY}$
for the scalable monoid in Example \ref{ex1-1}.

Since every monoid $\mathfrak{M}$ has a unique identity element $1_{\mathfrak{M}}$,
the class of all monoids forms a variety of algebras with a binary
operation $\ast:\left(x,y\right)\mapsto xy$, a nullary operation
$1_{\mathfrak{M}}:\left(\right)\mapsto1_{\mathfrak{M}}$ and identities
\[
x\left(yz\right)=\left(xy\right)z,\quad1_{\mathfrak{M}}x=x=x1_{\mathfrak{M}}.
\]
The class of all scalable monoids over a fixed ring $R$ is a variety
in addition equipped with a set of unary operations $\left\{ \omega_{\lambda}\mid\lambda\in R\right\} $,
derived from the scaling action $\omega$ in Definition \ref{thm:def1}
by setting $\omega_{\lambda}\left(x\right)=\lambda\cdot x$ for all
$\lambda\in R$ and $x\in X$, and with the additional identities
\[
\omega_{1}\left(x\right)=x,\quad\omega_{\lambda}\left(\omega_{\kappa}\left(x\right)\right)=\omega_{\lambda\kappa}\left(x\right),\quad\omega_{\lambda}\left(xy\right)=\omega_{\lambda}\left(x\right)\,y=x\,\omega_{\lambda}\left(y\right)\qquad\left(\lambda,\kappa\in R\right),
\]
corresponding to $1\cdot x=x$, $\alpha\cdot\left(\beta\cdot x\right)=\alpha\beta\cdot x$
and $\alpha\cdot xy=\left(\alpha\cdot x\right)y=x\left(\alpha\cdot y\right)$.

The scalable monoids is thus a variety of algebras belonging to the
class of all \emph{$\left(R,\boldsymbol{1}\right)$-magmas}%
{} 
\[
\left(X,\ast,(\omega{}_{\lambda})_{\lambda\in R},1_{\!X}\right).
\]
where $X$ is a carrier set, $\ast$ a binary operation, $\omega_{\lambda}$
a unary operation and $1_{X}$ a nullary operation. %
The general definitions of subalgebras, homomorphisms and products
of algebras in the theory of universal algebras apply to \emph{$\left(R,\boldsymbol{1}\right)$-}magmas.
In particular, a subalgebra of an \emph{$\left(R,\boldsymbol{1}\right)$-}magma
$X$ is a subset $Y$ of $X$ such that $1_{\!X}\in Y$ and $xy,\lambda\cdot x\in Y$
for any $x,y\in Y$ and $\lambda\in R$. Also, a homomorphism $\phi:X\rightarrow Y$
of \emph{$\left(R,\boldsymbol{1}\right)$}-magmas $X$ and $Y$ is
a function such that $\phi\left(1_{\!X}\right)=1_{\!Y}$ and we have
$\phi\left(xy\right)=\phi\left(x\right)\phi\left(y\right)$ and $\phi\left(\lambda\cdot x\right)=\lambda\cdot\phi\left(x\right)$
for any $x,y\in X$ and $\lambda\in R$.

Recall, furthermore, that varieties are closed under the operations
of forming subalgebras, homomorphic images and products since the
defining identities are replicated by these operations \cite{BIRK}.
Thus, a subalgebra of a scalable monoid over $R$, a homomorphic image
of a scalable monoid over $R$, and a direct product of scalable monoids
over $R$ are all scalable monoids over $R$. Results related to these
and other constructions will be considered in the remainder of Section
\ref{sec:2}.

\subsection{\label{subsec:13}Commensurability classes and canonical quotients}

In ancient Greek mathematics, the notions of a sum, difference or
ratio of magnitudes did not apply to magnitudes of different kinds,
so in particular these could not be commensurable in the Greek (Pythagorean)
sense. Moreover, magnitudes of the same kind, for example, two lengths,
could nevertheless be incommensurable. In this section, we introduce
a seemingly more radical idea: quantities are of the same kind \emph{if
and only if} they are commensurable.
\begin{defn}
\label{d3.1}Given a scalable monoid $X$ over $R$, let $\sim$ be
the relation on $X$ such that $x\sim y$ if and only if $\alpha\cdot x=\beta\cdot y$
for some $\alpha,\beta\in R.$ We say that $x$ and $y$ are \emph{commensurable}
if and only if $x\sim y$; otherwise $x$ and $y$ are \emph{incommensurable}.
\end{defn}

Let $R\cdot x$ denote the set $\left\{ \lambda\cdot x\mid\lambda\in R\right\} $,
that is, the orbit of $x\in X$ for the scaling action $\omega:R\times X\rightarrow X$,
and let $\approx$ denote the relation on $X$ such that $x\approx y$
if and only if there is some $t\in X$ such that $x,y\in R\cdot t$.
Note that $\approx$ is not an equivalence relation; it is reflexive
since $x\in1\cdot x$ for all $x\in X$ and symmetric by construction
but not transitive, meaning that the orbits for $\omega$ may overlap.
On the other hand, $x\sim y$ if and only if $\left(R\cdot x\right)\cap\left(R\cdot y\right)\neq\emptyset$,
and this relation is indeed transitive. 
\begin{prop}
\label{s3.1} The relation $\sim$ on a scalable monoid $X$ over
$R$ is an equivalence relation.
\end{prop}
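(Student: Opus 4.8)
The plan is to verify the three defining properties of an equivalence relation directly from Definition \ref{d3.1}, using Lemma \ref{thm:lem1} to dispatch the one genuinely non-obvious case. Reflexivity is immediate: taking $\alpha=\beta=1$ in Definition \ref{d3.1} gives $1\cdot x=1\cdot x$, so $x\sim x$ for every $x\in X$. Symmetry is equally direct: if $x\sim y$ is witnessed by $\alpha\cdot x=\beta\cdot y$, then the very same equation, read as $\beta\cdot y=\alpha\cdot x$, witnesses $y\sim x$.

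The substantive step is transitivity, and this is where the possible noncommutativity of $R$ has to be handled. Suppose $x\sim y$ and $y\sim z$, say $\alpha\cdot x=\beta\cdot y$ and $\gamma\cdot y=\delta\cdot z$ for suitable $\alpha,\beta,\gamma,\delta\in R$. I would scale the first equation by $\gamma$ and chain the resulting identities:
\[
\gamma\alpha\cdot x=\gamma\cdot(\alpha\cdot x)=\gamma\cdot(\beta\cdot y)=\gamma\beta\cdot y=\beta\gamma\cdot y=\beta\cdot(\gamma\cdot y)=\beta\cdot(\delta\cdot z)=\beta\delta\cdot z,
\]
so that $\gamma\alpha\cdot x=\beta\delta\cdot z$ exhibits a pair of scalars witnessing $x\sim z$. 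Equivalently, in the orbit language introduced just before the statement, this computation shows that if $(R\cdot x)\cap(R\cdot y)$ and $(R\cdot y)\cap(R\cdot z)$ are both nonempty then so is $(R\cdot x)\cap(R\cdot z)$.

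The only point requiring care—the hard part, such as it is—is the equality $\gamma\beta\cdot y=\beta\gamma\cdot y$ in the middle of this chain, since when $R$ is noncommutative $\gamma\beta$ and $\beta\gamma$ need not coincide as ring elements. This is exactly the content of Lemma \ref{thm:lem1}, which guarantees $\alpha\beta\cdot w=\beta\alpha\cdot w$ for all $\alpha,\beta\in R$ and $w\in X$; every other equality in the chain is simply an instance of the scaling-action axiom $\alpha\cdot(\beta\cdot w)=\alpha\beta\cdot w$ from Definition \ref{thm:def1}. With reflexivity, symmetry, and transitivity all established, $\sim$ is an equivalence relation, and the orbits $R\cdot x$ partition $X$ into the commensurability classes referred to in the surrounding discussion.
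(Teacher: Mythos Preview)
Your proof is correct and follows essentially the same route as the paper: reflexivity via $1\cdot x=1\cdot x$, symmetry by construction, and transitivity by scaling the first witness by $\gamma$ and invoking Lemma~\ref{thm:lem1} to swap the order of scalars acting on $y$. One small slip in your closing sentence: it is the equivalence classes of $\sim$ (the orbitoids), not the orbits $R\cdot x$ themselves, that partition $X$; as the paper notes just before the statement, the orbits may overlap.
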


\begin{proof}
The relation $\sim$ is reflexive since $1\cdot x=1\cdot x$ for all
$x\in X$, symmetric by construction, and transitive because if $\alpha\cdot x=\beta\cdot y$
and $\gamma\cdot y=\delta\cdot z$ for some $x,y,z\in X$ and $\alpha,\beta,\gamma,\delta\in R$
then it follows from Lemma \ref{thm:lem1} that
\[
\gamma\alpha\cdot x=\gamma\cdot\left(\alpha\cdot x\right)=\gamma\cdot\left(\beta\cdot y\right)=\beta\cdot\left(\gamma\cdot y\right)=\beta\cdot\left(\delta\cdot z\right)=\beta\delta\cdot z,
\]
where $\gamma\alpha,\beta\delta\in R$. 
\end{proof}
\begin{defn}
A\emph{ commensurability class} or \emph{orbitoid} $\mathsf{C}$ is
an equivalence class for $\sim$. The orbitoid that contains $x$
is denoted by $\left[x\right]$, and $X/{\sim}$ denotes the set $\left\{ \left[x\right]\mid x\in X\right\} $.
\end{defn}

For example, the commensurability classes of $R\left\llbracket x_{1};\ldots;x_{n}\right\rrbracket $
in Example \ref{ex1} are the sets $\left\{ \lambda x_{1}^{k_{1}}\ldots x_{n}^{k_{n}}\mid\lambda\in R\right\} $
for fixed non-negative integers $k_{1},\ldots,k_{n}$. 
\begin{rem}
The orbits corresponding to an action of a group $G$ on a set $X$
are precisely the equivalence classes given by the equivalence relation
$\sim_{G}$ defined by $\sim_{G}$ if and only if $\alpha\cdot x=y$
for some $\alpha\in G$; we clearly have $G\cdot x=G\cdot y$ if and
only if $x\sim_{G}y$. Similarly, orbitoids \textendash{} generalized
orbits in $X$ under a monoid action satisfying $\alpha\cdot\left(\beta\cdot x\right)=\beta\cdot\left(\alpha\cdot x\right)$
\textendash{} are given by the equivalence relation $\sim$ defined
by $x\sim y$ if and only if $\alpha\cdot x=\beta\cdot y$ for some
$\alpha,\beta\in R$. One may say that orbitoids generalize orbits
as $\sim$ generalizes $\sim_{G}$.%
\end{rem}

\begin{prop}
\label{p22-2}If $x\sim y$ then $\lambda\cdot x\sim y$, $x\sim\lambda\cdot y$
and $\lambda\cdot x\sim\lambda\cdot y$ for all $\lambda\in R$.
\end{prop}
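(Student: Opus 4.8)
The plan is to reduce all three assertions to a single auxiliary observation: for every $x\in X$ and every $\lambda\in R$, the scalar multiple $\lambda\cdot x$ is commensurable with $x$ itself, that is, $\lambda\cdot x\sim x$. This is immediate from Definition \ref{d3.1} using the witnesses $\alpha=1$ and $\beta=\lambda$. Indeed, by the scaling-action axiom $1\cdot z=z$ applied to $z=\lambda\cdot x$ we have $1\cdot(\lambda\cdot x)=\lambda\cdot x$, which is literally equal to $\lambda\cdot x$; so the pair $(1,\lambda)$ satisfies $1\cdot(\lambda\cdot x)=\lambda\cdot x$, exhibiting $\lambda\cdot x\sim x$. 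Notably, this step does not even require Lemma \ref{thm:lem1}.

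Once this observation is in hand, the three claims follow purely formally from the fact, established in Proposition \ref{s3.1}, that $\sim$ is an equivalence relation. Assume $x\sim y$. For $\lambda\cdot x\sim y$, chain $\lambda\cdot x\sim x$ with $x\sim y$ and apply transitivity. For $x\sim\lambda\cdot y$, first rewrite $\lambda\cdot y\sim y$ as $y\sim\lambda\cdot y$ by symmetry, then chain $x\sim y$ with $y\sim\lambda\cdot y$ and apply transitivity. Finally, $\lambda\cdot x\sim\lambda\cdot y$ follows either by combining the two previous statements or directly from the chain $\lambda\cdot x\sim x\sim y\sim\lambda\cdot y$.

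I do not expect a genuine obstacle here: the content of the proposition is entirely absorbed by the auxiliary fact, and the ``hard part'' is merely recognizing that a scalar multiple of an element always lies in the same orbitoid as the element. A fully explicit alternative avoiding the equivalence-relation machinery is also available by direct computation from $\alpha\cdot x=\beta\cdot y$ and the scalar-commutativity identity $\alpha\lambda\cdot x=\lambda\alpha\cdot x$ of Lemma \ref{thm:lem1}, but it is longer and conceptually less transparent than the transitivity argument above, so I would present the latter.
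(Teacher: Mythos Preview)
Your proof is correct but proceeds differently from the paper. You first establish the special case $\lambda\cdot x\sim x$ directly from Definition~\ref{d3.1} with witnesses $(1,\lambda)$, and then obtain all three conclusions by transitivity and symmetry of $\sim$ (Proposition~\ref{s3.1}). The paper instead does the direct computation you mention at the end: starting from $\alpha\cdot x=\beta\cdot y$, it uses Lemma~\ref{thm:lem1} to write the single chain
\[
\alpha\lambda\cdot x=\alpha\cdot(\lambda\cdot x)=\lambda\cdot(\alpha\cdot x)=\lambda\cdot(\beta\cdot y)=\beta\cdot(\lambda\cdot y)=\beta\lambda\cdot y,
\]
from which all three relations can be read off with explicit witnesses. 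In the paper's organization, the fact $\lambda\cdot x\sim x$ is then recorded afterward as Corollary~\ref{c31}, i.e., as the special case $y=x$ of the proposition, whereas you effectively prove that corollary first and derive the proposition from it. Your route is slightly more conceptual and avoids invoking Lemma~\ref{thm:lem1}; the paper's route is self-contained in a single line and exhibits the witnesses explicitly without appealing to transitivity.
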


\begin{proof}
If $x\sim y$ then $\alpha\cdot x=\beta\cdot y$ for some $\alpha,\beta\in R$,
so by Lemma \ref{thm:lem1}
\[
\alpha\lambda\cdot x=\alpha\cdot\left(\lambda\cdot x\right)=\lambda\cdot\left(\alpha\cdot x\right)=\lambda\cdot\left(\beta\cdot y\right)=\beta\cdot\left(\lambda\cdot y\right)=\beta\lambda\cdot y,
\]
where $\alpha\lambda,\beta\lambda\in R$. 
\end{proof}
\begin{cor}
\label{c31}$\lambda\cdot x\sim x$ for all $x\in X$ and $\lambda\in R$.
\end{cor}

It is instructive to compare the present notion of commensurability
with the classical one. If $x=\alpha\cdot t$ and $y=\beta\cdot t$
then $\beta\cdot x=\beta\cdot\left(\alpha\cdot t\right)=\alpha\cdot\left(\beta\cdot t\right)=\alpha\cdot y$,
so if $x\approx y$ then $x\sim y$. We say that $x$ and $y$ are
\emph{strongly commensurable} if and only if $x\approx y$; otherwise,
$x$ and $y$ are said to be \emph{weakly incommensurable}. 

Incommensurability of magnitudes in the Pythagorean sense obviously
corresponds to weak incommensurability, so it is implied by, but does
not imply, incommensurability in the present sense. Conversely, we
have weakened the classical notion of commensurability here, at the
same time making commensurability into an equivalence relation. The
present concept of commensurability corresponds to the intuitive notion
of magnitudes of the same kind, or the somewhat fuzzy notion of quantities
of the same kind in modern theoretical metrology \cite{VIM}.
\begin{rem}
The mathematical quantities defined here are size-properties of certain
objects or phenomena. Through one or more abstraction steps, concrete
properties can be reduced to more abstract properties. The level of
abstraction chosen affects the categorization of quantities into kinds
of quantities. For example, it would seem that there are no scalars
$\alpha,\beta$ such that $\alpha\cdot x=\beta\cdot y$, where $x$
is a planar angle and $y$ a solid angle. A plane angle cannot be
resized to a solid angle, or vice versa, so plane and solid angles
would appear to be quantities of different kinds. However, this is
a conclusion based on concrete properties of plane and solid angles.
It is also possible to characterize them by commensurable abstract
size-properties, so that they become quantities of the same kind.
This is why both plane and solid angles are mainly regarded as ''dimensionless''
quantities (see also \cite[pp. 8-12]{JON3}).

\end{rem}

So far, we have regarded $\sim$ as an equivalence relation, but it
turns out that more can be said.
\begin{prop}
\label{s3.2}Let $X$ be a scalable monoid over $R$. The relation
$\sim$ is a congruence on $X$ with regard to the operations $\left(x,y\right)\mapsto xy$
and $\left(\lambda,x\right)\mapsto\lambda\cdot x$.
\end{prop}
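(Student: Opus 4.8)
The plan is to use the fact, already established in Proposition \ref{s3.1}, that $\sim$ is an equivalence relation, and then to verify that $\sim$ is compatible with each of the two operations of the algebraic structure. Since a congruence on an $(R,\boldsymbol{1})$-magma is precisely an equivalence relation compatible with the binary operation $(x,y)\mapsto xy$ and with every unary operation $x\mapsto\lambda\cdot x$, there are exactly two things to check, and I would treat them in turn.

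Compatibility with the scaling action is immediate: it is exactly the last assertion of Proposition \ref{p22-2}, which says that $x\sim y$ implies $\lambda\cdot x\sim\lambda\cdot y$ for every $\lambda\in R$. So this case requires nothing new and can simply be cited.

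For compatibility with multiplication I would show that $x\sim x'$ and $y\sim y'$ together imply $xy\sim x'y'$. Choose witnessing scalars $\alpha,\beta,\gamma,\delta\in R$ with $\alpha\cdot x=\beta\cdot x'$ and $\gamma\cdot y=\delta\cdot y'$. The key tool is Lemma \ref{thm:lem1}, whose first identity $(\alpha\cdot x)(\gamma\cdot y)=\alpha\gamma\cdot xy$ lets the two scalar witnesses combine multiplicatively. Applying it to both sides of the equation $(\alpha\cdot x)(\gamma\cdot y)=(\beta\cdot x')(\delta\cdot y')$ yields $\alpha\gamma\cdot xy=\beta\delta\cdot x'y'$, and since $\alpha\gamma,\beta\delta\in R$ this is exactly $xy\sim x'y'$.

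The only point requiring care, and the reason the statement is not entirely trivial, is that the binary operation must be perturbed in both arguments simultaneously; one cannot merely transport a congruence in a single variable. What makes the one-step argument work is precisely the bilinear exchange property of Lemma \ref{thm:lem1}, which converts a product of two separately scaled elements into a single scaling of their product. An alternative route would be to establish left and right compatibility separately ($x\sim x'$ gives $xy\sim x'y$ via $\alpha\cdot xy=(\alpha\cdot x)y=(\beta\cdot x')y=\beta\cdot x'y$, and symmetrically on the right) and then to chain them using transitivity of $\sim$; but the direct argument via Lemma \ref{thm:lem1} is cleaner and avoids a second appeal to transitivity.
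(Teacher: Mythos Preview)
Your proof is correct and follows essentially the same approach as the paper: both use Lemma \ref{thm:lem1} to combine the two scalar witnesses multiplicatively and obtain $xy\sim x'y'$ in one step, and both cite Proposition \ref{p22-2} for compatibility with scaling. The only differences are cosmetic (your scalars are named $\alpha,\beta,\gamma,\delta$ rather than $\alpha,\alpha',\beta,\beta'$) and your added remark about the alternative left/right route, which the paper omits.
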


\begin{proof}
If $\alpha\cdot x=\alpha'\cdot x'$ and $\beta\cdot y=\beta'\cdot y'$
for some $x,x',y,y'\in X$ and $\alpha,\alpha',\beta,\beta'\in R$
then $\left(\alpha\cdot x\right)\left(\beta\cdot y\right)=\left(\alpha'\cdot x'\right)\left(\beta'\cdot y'\right)$,
so $\alpha\beta\cdot xy=\alpha'\beta'\cdot x'y'$ by Lemma \ref{thm:lem1}.
As $\alpha\beta,\alpha'\beta'\in R$, this means that if $x\sim x'$
and $y\sim y'$ then $xy\sim x'y'$. Also, if $x\sim x'$ then $\lambda\cdot x\sim\lambda\cdot x'$
for any $\lambda\in R$ by Proposition \ref{p22-2}.
\end{proof}
In view of Proposition \ref{s3.2}, we can define operations on $X/{\sim}$
as follows:
\begin{defn}
\label{def:214}Set $\left[x\right]\left[y\right]=\left[xy\right]$,
$\lambda\cdot\left[x\right]=\left[\lambda\cdot x\right]$ and $1_{X/{\sim}}=\left[1_{\!X}\right]$
for any $\left[x\right],\left[y\right]\in X/{\sim}$ and $\lambda\in R$.
\end{defn}

{} Given these definitions, $X/{\sim}$ is an $\left(R,\boldsymbol{1}\right)$-magma
and the surjective function $\phi:X\rightarrow X/{\sim}$ given by
$\phi\left(x\right)=\left[x\right]$ satisfies the conditions 
\[
\phi\left(xy\right)=\phi\left(x\right)\phi\left(y\right),\quad\phi\left(\lambda\cdot x\right)=\lambda\cdot\phi\left(x\right),\quad\phi\left(1_{\!X}\right)=1_{X/{\sim}},
\]
so $\phi$ is a homomorphism of $\left(R,\boldsymbol{1}\right)$-magmas
and thus of scalable monoids.%
\begin{prop}
\label{s3.3-1}If $X$ is a scalable monoid over $R$ then $X/{\sim}$
is a scalable monoid over $R$, and the function
\[
\phi:X\rightarrow X/{\sim},\qquad x\mapsto\left[x\right],
\]
is a surjective homomorphism of scalable monoids.
\end{prop}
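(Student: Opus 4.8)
The plan is to lean on the universal-algebra framework already set up in Section \ref{s12}, where the scalable monoids over $R$ were exhibited as a variety of $\left(R,\boldsymbol{1}\right)$-magmas. The substance of the proposition is that forming the quotient by the congruence $\sim$ leaves us inside this variety and that the canonical projection is a homomorphism. In fact almost everything needed has already been assembled: Proposition \ref{s3.2} shows that $\sim$ is a congruence for both the multiplication and the scaling action, which is precisely what makes the operations of Definition \ref{def:214} well defined, and the discussion following that definition checks that $\phi$ preserves $\ast$, each $\omega_\lambda$, and the identity, so that $\phi$ is a surjective homomorphism of $\left(R,\boldsymbol{1}\right)$-magmas.

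Granting this, the first step is to invoke the standard closure property recalled in Section \ref{s12} with reference to \cite{BIRK}: a homomorphic image of an algebra in a variety again lies in that variety. Since $X$ is a scalable monoid and $\sim$ is a congruence on it by Proposition \ref{s3.2}, the quotient $X/{\sim}$, equipped with the operations of Definition \ref{def:214}, therefore satisfies all the defining identities and is itself a scalable monoid over $R$. Surjectivity of $\phi$ is immediate from its definition, so combining this with the homomorphism conditions already verified completes the proof.

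If one prefers a self-contained argument, the scalable-monoid axioms can be checked directly on $X/{\sim}$, and this is routine because every operation on classes is defined on representatives: associativity and the unit laws collapse to $[x][y][z]=[xyz]$ and $[1_{\!X}][x]=[x]=[x][1_{\!X}]$; the action axioms collapse to $1\cdot[x]=[1\cdot x]=[x]$ and $\alpha\cdot(\beta\cdot[x])=[\alpha\beta\cdot x]=\alpha\beta\cdot[x]$; and the bilinearity laws collapse to $\alpha\cdot([x][y])=[(\alpha\cdot x)y]=(\alpha\cdot[x])[y]$ together with its mirror image. Each of these is inherited verbatim from the corresponding identity in $X$. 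The only thing one must be careful about throughout is independence of the choice of representatives, and that is exactly the obstacle that Proposition \ref{s3.2} was designed to eliminate; beyond it no real difficulty remains.
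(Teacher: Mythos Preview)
Your proposal is correct and mirrors the paper's own argument: the paper too derives the result from Proposition \ref{s3.2} and Definition \ref{def:214}, noting that $\phi$ is a surjective homomorphism of $\left(R,\boldsymbol{1}\right)$-magmas and then invoking the fact (recalled in Section \ref{s12}) that varieties are closed under homomorphic images. Your optional direct verification of the axioms is a harmless addition but not needed.
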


We call $X/{\sim}$ the \emph{canonical quotient} of $X$.
\begin{prop}
\label{p5-1}If $X$ is a scalable monoid then $X/{\sim}$ is a trivially
scalable monoid.
\end{prop}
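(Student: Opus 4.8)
The plan is to unwind the definition of a trivially scalable monoid and reduce the claim to a result already established, namely Corollary \ref{c31}. Recall that a trivially scalable monoid is, by the definition given just after Definition \ref{thm:def1}, a monoid whose scaling action satisfies $\lambda \cdot x = x$ for every ring element $\lambda$ and every element $x$. Since Proposition \ref{s3.3-1} already tells us that $X/{\sim}$ is a scalable monoid over $R$, the only thing left to verify is that its scaling action is the trivial one, i.e.\ that $\lambda \cdot [x] = [x]$ for all $\lambda \in R$ and all $[x] \in X/{\sim}$.

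First I would compute the left-hand side using the definition of the scaling action on the quotient. By Definition \ref{def:214}, the action is given by $\lambda \cdot [x] = [\lambda \cdot x]$, so the whole statement comes down to showing that $[\lambda \cdot x] = [x]$ as orbitoids, that is, that $\lambda \cdot x$ and $x$ lie in the same equivalence class for $\sim$.

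This last point is exactly Corollary \ref{c31}, which asserts $\lambda \cdot x \sim x$ for all $x \in X$ and $\lambda \in R$. Hence $[\lambda \cdot x] = [x]$, and combining this with the previous step gives $\lambda \cdot [x] = [\lambda \cdot x] = [x]$. Since $\lambda$ and $[x]$ were arbitrary, the scaling action on $X/{\sim}$ is trivial, and therefore $X/{\sim}$ is a trivially scalable monoid.

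There is essentially no obstacle here: the entire content has been front-loaded into Proposition \ref{s3.3-1} (which supplies the scalable-monoid structure and the fact that $X/{\sim}$ really is well defined) and into Corollary \ref{c31} (which supplies the absorption $\lambda \cdot x \sim x$). The proof is a two-line bookkeeping argument that chains the definition of the quotient action with that corollary, and the only care required is to cite Definition \ref{def:214} so that the reader sees that $\lambda \cdot [x]$ unambiguously means $[\lambda \cdot x]$ before invoking Corollary \ref{c31}.
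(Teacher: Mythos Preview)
Your proof is correct and follows essentially the same approach as the paper: both use Definition \ref{def:214} to write $\lambda\cdot[x]=[\lambda\cdot x]$ and then invoke Corollary \ref{c31} to conclude $[\lambda\cdot x]=[x]$. The paper compresses this into a single line, while you additionally cite Proposition \ref{s3.3-1} to justify that $X/{\sim}$ is a scalable monoid in the first place, which is a reasonable clarification.
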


\begin{proof}
By Corollary \ref{c31}, $\lambda\cdot\left[x\right]=\left[\lambda\cdot x\right]=\left[x\right]$
for all $\lambda\in R,x\in X$.
\end{proof}
In many situations, it is natural to regard $X/{\sim}$ as a monoid
with operations inherited from $X$ by setting $\left[x\right]\left[y\right]=\left[xy\right]$
and $1_{X/{\sim}}=\left[1_{\!X}\right]$%
.

\subsection{\label{s16}Direct and tensor products of scalable monoids}

Consider an $\left(R,\boldsymbol{1}\right)$-magma 
\[
\left(X\times Y,\ast,\left(\omega_{\lambda}\right)_{\lambda\in R},1_{X\times Y}\right)
\]
where $X$ and $Y$ denote the underlying sets of two scalable monoids
$X$ and $Y$ over $R$, $\ast$ is a binary operation given by $\left(x_{1},y_{1}\right)\left(x_{2},y_{2}\right)=\left(x_{1}x_{2},y_{1}y_{2}\right)$,
where $x_{1},x_{2}\in X$ and $y_{1},y_{2}\in Y$, each $\omega_{\lambda}$
is a unary operation given by $\omega_{\lambda}\left(x,y\right)=\lambda\cdot\left(x,y\right)=\left(\lambda\cdot x,\lambda\cdot y\right)$,
where $x\in X$ and $y\in Y$, and $1_{X\times Y}$ is a nullary operation
given by $1_{X\times Y}=\left(1_{\!X},1_{\!Y}\right)$. Straight-forward
calculations (or the HSP theorem \cite{BIRK}) show that this $\left(R,\boldsymbol{1}\right)$-magma,
likewise denoted $X\times Y$, is a scalable monoid over $R$. We
call $X\times Y$ the \emph{direct product} of $X$ and $Y$.

The direct product of scalable monoids is a generic product, applicable
to any universal algebra. Another kind of product, which exploits
the fact that $\left(\lambda\cdot x\right)y=\lambda\cdot xy=x\left(\lambda\cdot y\right)$
in scalable monoids, namely the \emph{tensor product,} turns out to
be more useful in many cases.
\begin{defn}
Given scalable monoids $X$ and $Y$ over $R$, let $\backsim_{\otimes}$
be the binary relation on $X\times Y$ such that $\left(x_{1},y_{1}\right)\backsim_{\otimes}\left(x_{2},y_{2}\right)$
if and only if $\left(\alpha\cdot x_{1},\beta\cdot y_{1}\right)=\left(\beta\cdot x_{2},\alpha\cdot y_{2}\right)$
for some $\alpha,\beta\in R$.
\end{defn}

\begin{prop}
Let $X$ and $Y$ be scalable monoids over $R$. Then $\backsim_{\otimes}$
is an equivalence relation on $X\times Y$. 
\end{prop}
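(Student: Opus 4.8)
The plan is to verify directly that $\backsim_{\otimes}$ is reflexive, symmetric, and transitive, after first rewriting the defining condition in coordinate form: $(x_1,y_1) \backsim_{\otimes} (x_2,y_2)$ holds precisely when there exist $\alpha,\beta \in R$ with $\alpha \cdot x_1 = \beta \cdot x_2$ and $\beta \cdot y_1 = \alpha \cdot y_2$. Reflexivity is immediate by taking $\alpha = \beta = 1$, since then both equations reduce to $x = x$ and $y = y$ via $1 \cdot x = x$. Symmetry is equally quick: if $\alpha,\beta$ witness $(x_1,y_1) \backsim_{\otimes} (x_2,y_2)$, then the same scalars with their roles interchanged, $\alpha' = \beta$ and $\beta' = \alpha$, witness $(x_2,y_2) \backsim_{\otimes} (x_1,y_1)$, because $\beta \cdot x_2 = \alpha \cdot x_1$ and $\alpha \cdot y_2 = \beta \cdot y_1$ are just the original equations read backwards.

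Transitivity is the substantive step. Suppose $\alpha,\beta$ witness $(x_1,y_1) \backsim_{\otimes} (x_2,y_2)$ and $\gamma,\delta$ witness $(x_2,y_2) \backsim_{\otimes} (x_3,y_3)$, so that $\alpha\cdot x_1 = \beta\cdot x_2$, $\beta\cdot y_1 = \alpha\cdot y_2$, $\gamma\cdot x_2 = \delta\cdot x_3$, and $\delta\cdot y_2 = \gamma\cdot y_3$. I would scale the first equation by $\gamma$ to obtain, using Lemma \ref{thm:lem1} to move scalars past one another, $\gamma\alpha\cdot x_1 = \gamma\cdot(\beta\cdot x_2) = \beta\cdot(\gamma\cdot x_2) = \beta\cdot(\delta\cdot x_3) = \beta\delta\cdot x_3$. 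Analogously, scaling $\beta\cdot y_1 = \alpha\cdot y_2$ by $\delta$ yields $\delta\beta\cdot y_1 = \alpha\cdot(\delta\cdot y_2) = \alpha\cdot(\gamma\cdot y_3) = \alpha\gamma\cdot y_3$. Invoking Lemma \ref{thm:lem1} once more to rewrite $\delta\beta\cdot y_1$ as $\beta\delta\cdot y_1$ and $\alpha\gamma\cdot y_3$ as $\gamma\alpha\cdot y_3$, the two derived equations become $\gamma\alpha\cdot x_1 = \beta\delta\cdot x_3$ and $\beta\delta\cdot y_1 = \gamma\alpha\cdot y_3$. Setting $\mu = \gamma\alpha$ and $\nu = \beta\delta$, these say exactly that $\mu\cdot x_1 = \nu\cdot x_3$ and $\nu\cdot y_1 = \mu\cdot y_3$, which is $(x_1,y_1)\backsim_{\otimes}(x_3,y_3)$.

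The main obstacle, and the reason this is not purely formal, is that the relation is \emph{twisted}: the scalar acting on the first coordinate reappears acting on the second coordinate of the right-hand pair, and vice versa. Chaining two such twisted relations therefore requires producing a single pair $(\mu,\nu)$ that simultaneously matches both coordinates, and the crossing pattern means one cannot simply multiply witnessing scalars coordinatewise. The key that makes it work is Lemma \ref{thm:lem1}: because $\kappa\lambda\cdot x = \lambda\kappa\cdot x$ even when $R$ is noncommutative, the products $\gamma\alpha$ and $\beta\delta$ can be reorganized consistently across both coordinates. Without this commutativity of the scaling action the argument would break down for noncommutative $R$, so I would take care to cite Lemma \ref{thm:lem1} at each point where scalars are permuted, rather than assuming commutativity of $R$ itself.
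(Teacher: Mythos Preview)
Your proof is correct and follows essentially the same approach as the paper: reflexivity via $\alpha=\beta=1$, symmetry by construction, and transitivity by scaling the witnessing equations by $\gamma$ and $\delta$ respectively and then invoking Lemma~\ref{thm:lem1} to permute scalars so that the pair $(\gamma\alpha,\beta\delta)$ witnesses $(x_1,y_1)\backsim_\otimes(x_3,y_3)$. The only cosmetic difference is that the paper carries out the transitivity computation in pair notation rather than coordinatewise.
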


\begin{proof}
$\backsim_{\otimes}$ is reflexive since $\left(1\cdot x,1\cdot y\right)=\left(1\cdot x,1\cdot y\right)$,
and symmetric by construction. If $\left(\alpha\cdot x_{1},\beta\cdot y_{1}\right)=\left(\beta\cdot x_{2},\alpha\cdot y_{2}\right)$
and $\left(\gamma\cdot x_{2},\delta\cdot y_{2}\right)=\left(\delta\cdot x_{3},\gamma\cdot y_{3}\right)$
then 
\begin{gather*}
\left(\gamma\cdot\left(\alpha\cdot x_{1}\right),\delta\cdot\left(\beta\cdot y_{1}\right)\right)=\left(\gamma\cdot\left(\beta\cdot x_{2}\right),\delta\cdot\left(\alpha\cdot y_{2}\right)\right),\\
\left(\beta\cdot\left(\gamma\cdot x_{2}\right),\alpha\cdot\left(\delta\cdot y_{2}\right)\right)=\left(\beta\cdot\left(\delta\cdot x_{3}\right),\alpha\cdot\left(\gamma\cdot y_{3}\right)\right).
\end{gather*}
By Lemma \ref{thm:lem1}, $\left(\gamma\cdot\left(\beta\cdot x_{2}\right),\delta\cdot\left(\alpha\cdot y_{2}\right)\right)=\left(\beta\cdot\left(\gamma\cdot x_{2}\right),\alpha\cdot\left(\delta\cdot y_{2}\right)\right)$,
and thus
\begin{gather*}
\left(\gamma\alpha\cdot x_{1},\delta\beta\cdot y_{1}\right)=\left(\beta\delta\cdot x_{3},\alpha\gamma\cdot y_{3}\right)=\left(\delta\beta\cdot x_{3},\gamma\alpha\cdot y_{3}\right),
\end{gather*}
where $\gamma\alpha,\delta\beta\in R$, so $\backsim_{\otimes}$ is
transitive as well.
\end{proof}
\begin{defn}
Let $X$ and $Y$ be scalable monoids, let $x\otimes y$ denote the
equivalence class 
\[
\left\{ \left(s,t\right)\mid\left(s,t\right)\backsim_{\otimes}\left(x,y\right)\right\} ,
\]
where $x\in X,y\in Y$, and let $X\otimes Y$ denote the set 
\[
\left\{ x\otimes y\mid x\in X,y\in Y\right\} .
\]
equipped with the operations given by
\[
\left(x_{1}\otimes y_{1}\right)\left(x_{2}\otimes y_{2}\right)=x_{1}x_{2}\otimes y_{1}y_{2},\quad\lambda\cdot x\otimes y=\left(\lambda\cdot x\right)\otimes y,\quad1_{X\otimes Y}=1_{\!X}\otimes1_{\!Y}.
\]
We call $X\otimes Y$ the \emph{tensor product }of\emph{ $X$ and
$Y$}.
\end{defn}

\begin{prop}
Let $X$ and $Y$ be scalable monoids over $R$, $x\in X$ and $y\in X$.
Then $\left(\lambda\cdot x\right)\otimes y=x\otimes\left(\lambda\cdot y\right)$
for every $\lambda\in R$.
\end{prop}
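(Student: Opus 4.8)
The plan is to reduce the asserted equality of tensor classes to a single verification of the defining relation $\backsim_{\otimes}$. Since $x\otimes y$ is by definition the $\backsim_{\otimes}$-equivalence class of the pair $\left(x,y\right)$, the identity $\left(\lambda\cdot x\right)\otimes y=x\otimes\left(\lambda\cdot y\right)$ holds if and only if $\left(\lambda\cdot x,y\right)\backsim_{\otimes}\left(x,\lambda\cdot y\right)$. So the entire task is to exhibit scalars $\alpha,\beta\in R$ witnessing this relation.

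Writing out the definition with $\left(x_{1},y_{1}\right)=\left(\lambda\cdot x,y\right)$ and $\left(x_{2},y_{2}\right)=\left(x,\lambda\cdot y\right)$, I need $\alpha,\beta\in R$ satisfying both $\alpha\cdot\left(\lambda\cdot x\right)=\beta\cdot x$ and $\beta\cdot y=\alpha\cdot\left(\lambda\cdot y\right)$. The natural choice is $\alpha=1$ and $\beta=\lambda$: then the first equation reads $1\cdot\left(\lambda\cdot x\right)=\lambda\cdot x$ and the second reads $\lambda\cdot y=1\cdot\left(\lambda\cdot y\right)$, both of which are instances of the monoid-action identity $1\cdot z=z$ from Definition \ref{thm:def1}. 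No further axioms are required.

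The only real content is spotting the witnessing pair $\left(\alpha,\beta\right)=\left(1,\lambda\right)$, and I expect no genuine obstacle beyond that. It is worth noting that, unlike most of the preceding propositions, this argument invokes neither Lemma \ref{thm:lem1} nor the associativity law $\alpha\cdot\left(\beta\cdot x\right)=\alpha\beta\cdot x$; the relation $\backsim_{\otimes}$ is engineered precisely so that a scalar applied to the first coordinate can be traded for the same scalar applied to the second, and the pair $\left(\lambda\cdot x,y\right)$ is already in the ``balanced'' form relative to $\left(x,\lambda\cdot y\right)$ that makes the trivial witnesses suffice.
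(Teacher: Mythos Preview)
Your proof is correct and is essentially identical to the paper's: both verify $(\lambda\cdot x,y)\backsim_{\otimes}(x,\lambda\cdot y)$ by choosing the witnesses $\alpha=1$ and $\beta=\lambda$, reducing everything to the identity $1\cdot z=z$. The paper's entire argument is the single line $\left(1\cdot\left(\lambda\cdot x\right),\lambda\cdot y\right)=\left(\lambda\cdot x,1\cdot\left(\lambda\cdot y\right)\right)$, which is exactly your computation.
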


\begin{proof}
We have $\left(1\cdot\left(\lambda\cdot x\right),\lambda\cdot y\right)=\left(\lambda\cdot x,1\cdot\left(\lambda\cdot y\right)\right)$,
so $\left(\lambda\cdot x,y\right)\backsim_{\otimes}\left(x,\lambda\cdot y\right)$.
\end{proof}
\begin{prop}
Let $X$ and $Y$ be scalable monoids over $R$. Then $X\otimes Y$
is a scalable monoid over $R$.
\end{prop}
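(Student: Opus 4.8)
The plan is to realize $X\otimes Y$ as a quotient of a suitable scalable monoid by the congruence $\backsim_{\otimes}$, so that the scalable-monoid axioms come for free from closure of the variety under homomorphic images, leaving only the congruence property to be checked by hand.

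The first point I would stress is that the scaling used in the definition of $X\otimes Y$, namely $\lambda\cdot\left(x\otimes y\right)=\left(\lambda\cdot x\right)\otimes y$, is \emph{not} the diagonal scaling on the direct product $X\times Y$, but the ``left'' scaling $\lambda\cdot\left(x,y\right)=\left(\lambda\cdot x,y\right)$. This left scaling is precisely the diagonal scaling on $X\times Y^{\flat}$, where $Y^{\flat}$ denotes $Y$ re-equipped with the trivial scaling action. Since trivially scalable monoids are scalable monoids and direct products of scalable monoids are scalable monoids, $X\times Y^{\flat}$ is a scalable monoid over $R$ whose operations are exactly the multiplication $\left(x_{1},y_{1}\right)\left(x_{2},y_{2}\right)=\left(x_{1}x_{2},y_{1}y_{2}\right)$, the left scaling, and the identity $\left(1_{\!X},1_{\!Y}\right)$.

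Next I would verify that $\backsim_{\otimes}$ is a congruence on $X\times Y^{\flat}$. For multiplication, assuming $\left(x_{1},y_{1}\right)\backsim_{\otimes}\left(x_{1}',y_{1}'\right)$ and $\left(x_{2},y_{2}\right)\backsim_{\otimes}\left(x_{2}',y_{2}'\right)$ with witnesses $\alpha_{1},\beta_{1}$ and $\alpha_{2},\beta_{2}$, I would take $\alpha=\alpha_{1}\alpha_{2}$, $\beta=\beta_{1}\beta_{2}$ and apply Lemma \ref{thm:lem1} to merge $\left(\alpha_{1}\cdot x_{1}\right)\left(\alpha_{2}\cdot x_{2}\right)$ into $\alpha_{1}\alpha_{2}\cdot x_{1}x_{2}$ (and likewise on the $Y$ side), obtaining $\left(x_{1}x_{2},y_{1}y_{2}\right)\backsim_{\otimes}\left(x_{1}'x_{2}',y_{1}'y_{2}'\right)$. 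For the left scaling, if $\left(x_{1},y_{1}\right)\backsim_{\otimes}\left(x_{2},y_{2}\right)$ is witnessed by $\alpha,\beta$, then the same $\alpha,\beta$ witness $\left(\lambda\cdot x_{1},y_{1}\right)\backsim_{\otimes}\left(\lambda\cdot x_{2},y_{2}\right)$, using the commutation $\alpha\cdot\left(\lambda\cdot x\right)=\lambda\cdot\left(\alpha\cdot x\right)$ from Lemma \ref{thm:lem1}. This congruence verification is the main obstacle, and the multiplicative case is its only genuinely non-trivial part, as it is exactly where the bilinearity identity of Lemma \ref{thm:lem1} is indispensable.

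Finally, since $\backsim_{\otimes}$ is a congruence on the scalable monoid $X\times Y^{\flat}$, the quotient $\left(X\times Y^{\flat}\right)/{\backsim_{\otimes}}$ is a homomorphic image of a scalable monoid and hence itself a scalable monoid over $R$ by closure of the variety \cite{BIRK}. Its induced operations are $\left[\left(x_{1},y_{1}\right)\right]\left[\left(x_{2},y_{2}\right)\right]=\left[\left(x_{1}x_{2},y_{1}y_{2}\right)\right]$, $\lambda\cdot\left[\left(x,y\right)\right]=\left[\left(\lambda\cdot x,y\right)\right]$ and unit $\left[\left(1_{\!X},1_{\!Y}\right)\right]$, which under the identification $\left[\left(x,y\right)\right]=x\otimes y$ coincide with the operations of $X\otimes Y$ given in the definition. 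Because these operations descend from a congruence, their well-definedness on representatives is automatic and needs no separate check, and the proposition follows.
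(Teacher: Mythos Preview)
Your proof is correct and takes a genuinely different route from the paper's. The paper proceeds by direct verification: it checks each scalable-monoid axiom (identity, associativity, the three scaling identities) via explicit computations on representatives, amounting to six separate calculations. You instead realize $X\otimes Y$ as the quotient of the scalable monoid $X\times Y^{\flat}$ by the congruence $\backsim_{\otimes}$ and invoke closure of the variety under homomorphic images. Your approach has two merits: it reduces the work to checking compatibility of $\backsim_{\otimes}$ with multiplication and with each $\omega_{\lambda}$ (one nontrivial calculation plus one easy one), and it makes the well-definedness of the operations on $X\otimes Y$ automatic---a point the paper's direct proof leaves tacit. The observation that the correct ambient object is $X\times Y^{\flat}$ rather than the diagonal product $X\times Y$ is a pleasant structural insight not present in the paper. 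The paper's approach, by contrast, is more elementary and self-contained, requiring no appeal to the HSP closure properties.
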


\begin{proof}
$X\otimes Y$ is a monoid since 
\begin{align*}
\left(1_{\!X}\otimes1_{\!Y}\right)\left(x\otimes y\right) & =1_{\!X}x\otimes1_{\!Y}y=x\otimes y=x1_{\!X}\otimes y1_{\!Y}=\left(x\otimes y\right)\left(1_{\!X}\otimes1_{\!Y}\right),
\end{align*}
\begin{align*}
\left(\left(x_{1}\otimes y_{1}\right)\left(x_{2}\otimes y_{2}\right)\right)\left(x_{3}\otimes y_{3}\right)\\
 & =\left(x_{1}x_{2}\otimes y_{1}y_{2}\right)\left(x_{3}\otimes y_{3}\right)=\left(x_{1}x_{2}\right)x_{3}\otimes\left(y_{1}y_{2}\right)y_{3}\\
 & =x_{1}\left(x_{2}x_{3}\right)\otimes y_{1}\left(y_{2}y_{3}\right)=\left(x_{1}\otimes y_{1}\right)\left(x_{2}x_{3}\otimes y_{2}y_{3}\right)\\
 & =\left(x_{1}\otimes y_{1}\right)\left(\left(x_{2}\otimes y_{2}\right)\left(x_{3}\otimes y_{3}\right)\right).
\end{align*}
Furthermore, 
\[
1\cdot x\otimes y=\left(1\cdot x\right)\otimes y=x\otimes y,
\]
\begin{gather*}
\alpha\cdot\left(\beta\cdot x\otimes y\right)=\alpha\cdot\left(\left(\beta\cdot x\right)\otimes y\right)=\left(\alpha\cdot\left(\beta\cdot x\right)\right)\otimes y=\left(\alpha\beta\cdot x\right)\otimes y=\alpha\beta\cdot x\otimes y,
\end{gather*}
\begin{align*}
\lambda\cdot\left(x_{1}\otimes y_{1}\right)\left(x_{2}\otimes y_{2}\right)\\
 & =\lambda\cdot x_{1}x_{2}\otimes y_{1}y_{2}=\left(\lambda\cdot x_{1}x_{2}\right)\otimes y_{1}y_{2}=\left(\left(\lambda\cdot x_{1}\right)x_{2}\right)\otimes y_{1}y_{2}\\
 & =\left(\left(\lambda\cdot x_{1}\right)\otimes y_{1}\right)\left(x_{2}\otimes y_{2}\right)=\left(\lambda\cdot x_{1}\otimes y_{1}\right)\left(x_{2}\otimes y_{2}\right),
\end{align*}
\begin{align*}
\lambda\cdot\left(x_{1}\otimes y_{1}\right)\left(x_{2}\otimes y_{2}\right)\\
 & =\lambda\cdot x_{1}x_{2}\otimes y_{1}y_{2}=\left(\lambda\cdot x_{1}x_{2}\right)\otimes y_{1}y_{2}=x_{1}x_{2}\otimes\left(\lambda\cdot y_{1}y_{2}\right)\\
 & =x_{1}x_{2}\otimes\left(y_{1}\left(\lambda\cdot y_{2}\right)\right)=\left(x_{1}\otimes y_{1}\right)\left(x_{2}\otimes\left(\lambda\cdot y_{2}\right)\right)\\
 & =\left(x_{1}\otimes y_{1}\right)\left(\lambda\cdot x_{2}\otimes y_{2}\right),
\end{align*}
so $X\otimes Y$ is a scalable monoid.
\end{proof}
It follows that if $X,Y,Z$ are scalable monoids over $R$ then $\left(X\otimes Y\right)\otimes Z$
and $X\otimes\left(Y\otimes Z\right)$ are scalable monoids over $R$:
the tensor product of $X\otimes Y$ and $Z$ in the first case and
of $X$ and $Y\otimes Z$ in the second case. It can also be shown
that $\left(X\otimes Y\right)\otimes Z$ and $X\otimes\left(Y\otimes Z\right)$
are isomorphic scalable monoids.

The tensor product can be used to ''glue'' scalable monoids together
in a natural way so as to combine them into more inclusive scalable
monoids. For example, $R\left\llbracket x;y\right\rrbracket $ is
isomorphic to the tensor product $R\left\llbracket x\right\rrbracket \otimes R\left\llbracket y\right\rrbracket $
but not to the direct product $R\left\llbracket x\right\rrbracket \times R\left\llbracket y\right\rrbracket $.

\subsection{\label{subsec:Quotients-of-scalable}Quotients of scalable monoids
by normal submonoids}

In a monoid we have $x\left(yz\right)=\left(xy\right)z$ and $1_{\!X}x=x=x1_{\!X}$,
so a submonoid $\mathfrak{M}$ of a scalable monoid $X$ can act as
a monoid on $X$ by left or right multiplication. In particular, we
can define an action $\pi:\mathfrak{M}\times X\rightarrow X$ by setting
$\pi\left(m,x\right)=mx$ for any $m\in\mathfrak{M}$ and $x\in X$.
This action can be used to define further notions in the same way
that $\sim$ , $\left[x\right]$ and $X/{\sim}$ were defined in terms
of the scaling action $\omega:R\times X\rightarrow X$.
\begin{defn}
\label{def5}Let $X$ be a scalable monoid and $\mathfrak{M}$ a submonoid
of $X$. Then $\sim_{\mathfrak{M}}$ is the relation on $X$ such
that $x\sim_{\mathfrak{M}}y$ if and only if $mx=ny$ for some $m,n\in\mathfrak{M}$. 
\end{defn}

A \emph{normal} submonoid of a scalable monoid $X$ is a submonoid
$\mathfrak{M}$ of $X$ such that $x\mathfrak{M}=\mathfrak{M}x$ for
every $x\in X$. It is clear that if $\mathfrak{M}$ is a \emph{central}
submonoid of $X$, that is, if every element of $\mathfrak{M}$ commutes
with every element of $X$, then $\mathfrak{M}$ is normal, and every
submonoid of a commutative scalable monoid is normal.
\begin{prop}
\label{s3.1-1}If $X$ is a scalable monoid and $\mathfrak{M}$ a
normal submonoid of $X$ then $\sim_{\mathfrak{M}}$ is an equivalence
relation on $X$. 
\end{prop}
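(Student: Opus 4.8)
The plan is to check the three defining properties of an equivalence relation for $\sim_{\mathfrak{M}}$, in direct analogy with the proof of Proposition \ref{s3.1}, but with the monoid action $\pi\left(m,x\right)=mx$ playing the role previously played by the scaling action. Reflexivity and symmetry require no hypothesis beyond $\mathfrak{M}$ being a submonoid. Since $1_{\!X}\in\mathfrak{M}$, the equation $1_{\!X}x=1_{\!X}x$ shows $x\sim_{\mathfrak{M}}x$; and the defining condition in Definition \ref{def5}, ``$mx=ny$ for some $m,n\in\mathfrak{M}$'', is manifestly symmetric in $x$ and $y$, so $x\sim_{\mathfrak{M}}y$ at once gives $y\sim_{\mathfrak{M}}x$.

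All the content of the proposition lies in transitivity, and this is the only place the normality of $\mathfrak{M}$ enters. Suppose $x\sim_{\mathfrak{M}}y$ and $y\sim_{\mathfrak{M}}z$, witnessed by $m_{1}x=n_{1}y$ and $m_{2}y=n_{2}z$ with $m_{1},n_{1},m_{2},n_{2}\in\mathfrak{M}$. In the scaling-action setting of Proposition \ref{s3.1} one simply commuted scalars using Lemma \ref{thm:lem1}; here the elements of $\mathfrak{M}$ need not commute with one another or with $y$, so instead I would left-multiply the two witnessing equations by carefully chosen elements of $\mathfrak{M}$ in order to make the two expressions in $y$ coincide. Concretely, it suffices to find $a,b\in\mathfrak{M}$ with $an_{1}=bm_{2}$, for then
\[
\left(am_{1}\right)x=an_{1}y=bm_{2}y=\left(bn_{2}\right)z,
\]
and since $am_{1},bn_{2}\in\mathfrak{M}$ this exhibits $x\sim_{\mathfrak{M}}z$.

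The crux, then, is producing such $a$ and $b$, which is precisely an Ore-type condition that normality guarantees. I would take $a=m_{2}$ and apply the identity $w\mathfrak{M}=\mathfrak{M}w$ with $w=m_{2}$: since $n_{1}\in\mathfrak{M}$ we have $m_{2}n_{1}\in m_{2}\mathfrak{M}=\mathfrak{M}m_{2}$, so $m_{2}n_{1}=bm_{2}$ for some $b\in\mathfrak{M}$, which is exactly $an_{1}=bm_{2}$. Substituting back completes the argument. The main obstacle is recognizing that transitivity cannot be obtained by commuting factors as in the scaling case, and that normality is the exact structural hypothesis allowing one to ``move'' a factor of $\mathfrak{M}$ from one side to the other so that the middle terms can be matched; once that is seen, the computation is routine.
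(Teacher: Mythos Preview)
Your proof is correct and essentially identical to the paper's: both handle reflexivity via $1_{\!X}x=1_{\!X}x$, symmetry by the symmetry of the definition, and transitivity by left-multiplying the first witnessing equation by $m_{2}$ and using normality in the form $m_{2}n_{1}\in m_{2}\mathfrak{M}=\mathfrak{M}m_{2}$ to rewrite $m_{2}n_{1}$ as $bm_{2}$ (the paper's $n_{0}m'$). Your exposition is more explicit about isolating the Ore-type condition, but the underlying argument is the same.
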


\begin{proof}
The relation $\sim_{\mathfrak{M}}$ is reflexive since $1_{\!X}x\sim_{\mathfrak{M}}1_{\!X}x$
for all $x\in X$, symmetric by construction, and transitive because
if $mx=ny$ and $m'y=n'z$ for $x,y,z\in X$ and $m,n,m',n'\in\mathfrak{M}$
then there is some $n_{0}\in\mathfrak{M}$ such that $m'mx=m'ny=n_{0}m'y=n_{0}n'z$,
where $m'm,n_{0}n'\in\mathfrak{M}$.
\end{proof}
\begin{defn}
We denote the equivalence class of $x$ for $\sim_{\mathfrak{M}}$
by $\left[x\right]_{\mathfrak{M}}$, and the set of equivalence classes
$\left\{ \left[x\right]_{\mathfrak{M}}\mid x\in X\right\} $ by $X/\mathfrak{M}$.
\end{defn}

Results analogous to Propositions \ref{s3.2} and \ref{s3.3-1} hold
for scalable monoids with normal submonoids. 
\begin{prop}
\label{s3.2-1-2}If $X$ is a scalable monoid over $R$ and $\mathfrak{M}$
a normal submonoid of $X$ then the relation $\sim_{\mathfrak{M}}$
is a congruence on $X$ with regard to the operations $\left(x,y\right)\mapsto xy$
and $\left(\lambda,x\right)\mapsto\lambda\cdot x$.
\end{prop}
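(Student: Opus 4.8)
The relation $\sim_{\mathfrak{M}}$ is already known to be an equivalence relation by Proposition \ref{s3.1-1}, so the plan is to verify the two compatibility conditions that upgrade an equivalence relation to a congruence: stability under the monoid multiplication and under each scaling operation. It is worth keeping the proof of Proposition \ref{s3.2} in mind as a template, but with one crucial difference. There the congruence $\sim$ was governed by \emph{scalar} multiplication, and scalars commute with everything by Lemma \ref{thm:lem1}, which made the multiplicative step immediate. Here the witnesses $m,n$ live in $\mathfrak{M}$ and need not commute with arbitrary elements of $X$, so the normality hypothesis $x\mathfrak{M}=\mathfrak{M}x$ must do exactly the work that commutativity did before.

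I would dispose of the scaling condition first, since it is routine. Suppose $x\sim_{\mathfrak{M}}x'$, say $mx=nx'$ with $m,n\in\mathfrak{M}$, and fix $\lambda\in R$. Applying the scaling action and using the bilinearity law $\lambda\cdot(ab)=a(\lambda\cdot b)$ from Definition \ref{thm:def1}, one gets
\[
m(\lambda\cdot x)=\lambda\cdot(mx)=\lambda\cdot(nx')=n(\lambda\cdot x'),
\]
so the \emph{same} pair $m,n$ witnesses $\lambda\cdot x\sim_{\mathfrak{M}}\lambda\cdot x'$. No normality is needed here.

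The multiplicative condition is the heart of the matter and the step I expect to be the main obstacle. Suppose $x\sim_{\mathfrak{M}}x'$ and $y\sim_{\mathfrak{M}}y'$, witnessed by $m_1x=n_1x'$ and $m_2y=n_2y'$ with all four elements in $\mathfrak{M}$; the goal is to produce $p,q\in\mathfrak{M}$ with $p(xy)=q(x'y')$. The natural move is to multiply the two witnessing equations, obtaining $(m_1x)(m_2y)=(n_1x')(n_2y')$. The obstruction is that the inner factors $m_2$ and $n_2$ are trapped between $x$ and $y$ (respectively $x'$ and $y'$) rather than sitting on the far left where the definition of $\sim_{\mathfrak{M}}$ requires them. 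This is precisely where normality enters: since $x\mathfrak{M}=\mathfrak{M}x$, there is some $m_2^{\ast}\in\mathfrak{M}$ with $xm_2=m_2^{\ast}x$, and likewise some $n_2^{\ast}\in\mathfrak{M}$ with $x'n_2=n_2^{\ast}x'$. Substituting and regrouping by associativity turns the equation into
\[
(m_1m_2^{\ast})(xy)=(n_1n_2^{\ast})(x'y'),
\]
and because $\mathfrak{M}$ is a submonoid the coefficients $m_1m_2^{\ast}$ and $n_1n_2^{\ast}$ lie in $\mathfrak{M}$, giving exactly $xy\sim_{\mathfrak{M}}x'y'$. (Alternatively one could change one factor at a time, showing $xy\sim_{\mathfrak{M}}xy'$ and $xy'\sim_{\mathfrak{M}}x'y'$ separately and invoking transitivity from Proposition \ref{s3.1-1}, but the symmetric one-step version above is cleaner.) With both conditions verified, $\sim_{\mathfrak{M}}$ is a congruence with regard to both operations.
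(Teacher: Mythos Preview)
Your proof is correct and follows essentially the same approach as the paper's own proof: multiply the two witnessing equations, use normality to slide the inner $\mathfrak{M}$-factors past $x$ and $x'$, and close under the submonoid property; the scaling step is likewise handled via the bilinearity law. The only cosmetic difference is notation (the paper writes $m_0',n_0'$ where you write $m_2^\ast,n_2^\ast$).
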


\begin{proof}
If $mx=nx'$ and $m'y=n'y'$ for some $x,x',y,y'\in X$ and $m,n,m',n'\in\mathfrak{M}$
then $(mx)(m'y)=(nx')(n'y')$, so $\left(mm_{0}'\right)\left(xy\right)=\left(nn_{0}'\right)\left(x'y'\right)$.
Hence, if $x\sim_{\mathfrak{M}}x'$ and $y\sim_{\mathfrak{M}}y'$
then $xy\sim_{\mathfrak{M}}x'y'$ since $m_{0}',n_{0}',mm_{0}',nn_{0}'\in\mathfrak{M}$. 

Also, if $mx=nx'$ for some $m,n\in M$ then $\lambda\cdot mx=\lambda\cdot nx'$
for all $\lambda\in R$, so $m\left(\lambda\cdot x\right)=n\left(\lambda\cdot x'\right)$.
Hence, if $x\sim_{\mathfrak{M}}x'$ then $\lambda\cdot x\sim_{\mathfrak{M}}\lambda\cdot x'$.
\end{proof}

In view of Proposition \ref{s3.2-1-2}, we can define operations on
$X/\mathfrak{M}$ as follows.
\begin{defn}
\label{def:214-1}Set $[x]_{\mathfrak{M}}[y]_{\mathfrak{M}}=[xy]_{\mathfrak{M}}$,
$\lambda\cdot\left[x\right]_{\mathfrak{M}}=\left[\lambda\cdot x\right]_{\mathfrak{M}}$
and $1_{X/\mathfrak{M}}=\left[1_{\!X}\right]_{\mathfrak{M}}$ for
any $\left[x\right],\left[y\right]\in X/{\sim}$ and $\lambda\in R$.
\end{defn}

With these definitions, $X/\mathfrak{M}$ is an $\left(R,\boldsymbol{1}\right)$-magma
and the surjective function $\phi_{\mathfrak{M}}:X\rightarrow X/\mathfrak{M}$
defined by $\phi_{\mathfrak{M}}\left(x\right)=\left[x\right]_{\mathfrak{M}}$
satisfies the conditions 
\[
\phi_{\mathfrak{M}}\left(xy\right)=\phi_{\mathfrak{M}}\left(x\right)\phi_{\mathfrak{M}}\left(y\right),\quad\phi_{\mathfrak{M}}\left(\lambda\cdot x\right)=\lambda\cdot\phi_{\mathfrak{M}}\left(x\right),\quad\phi_{\mathfrak{M}}\left(1_{\!X}\right)=1_{X/\mathfrak{M}},
\]
so $\phi_{\mathfrak{M}}$ is a homomorphism of $\left(R,\boldsymbol{1}\right)$-magmas
and hence of scalable monoids%
.
\begin{prop}
\label{p8}If $X$ is a scalable monoid over $R$ and $\mathfrak{M}$
a normal submonoid of $X$ then $X/\mathfrak{M}$ is a\textsf{ }scalable
monoid over $R$ and the function 
\[
\phi_{\mathfrak{M}}:X\rightarrow X/\mathfrak{M},\qquad x\mapsto\left[x\right]_{\mathfrak{M}}
\]
is a surjective homomorphism of scalable monoids.
\end{prop}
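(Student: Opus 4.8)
The plan is to mirror the argument already given for the canonical quotient in Proposition~\ref{s3.3-1}, exploiting the fact that the scalable monoids over $R$ form a variety of $\left(R,\boldsymbol{1}\right)$-magmas that is closed under homomorphic images. The decisive ingredient is Proposition~\ref{s3.2-1-2}, which guarantees that $\sim_{\mathfrak{M}}$ is a congruence with respect to both $\left(x,y\right)\mapsto xy$ and $\left(\lambda,x\right)\mapsto\lambda\cdot x$; everything else is formal, and I expect no genuine obstacle beyond invoking that result.

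First I would observe that, because $\sim_{\mathfrak{M}}$ is a congruence, the assignments $[x]_{\mathfrak{M}}[y]_{\mathfrak{M}}=[xy]_{\mathfrak{M}}$ and $\lambda\cdot[x]_{\mathfrak{M}}=[\lambda\cdot x]_{\mathfrak{M}}$ of Definition~\ref{def:214-1} do not depend on the chosen representatives: if $x\sim_{\mathfrak{M}}x'$ and $y\sim_{\mathfrak{M}}y'$ then $xy\sim_{\mathfrak{M}}x'y'$ and $\lambda\cdot x\sim_{\mathfrak{M}}\lambda\cdot x'$ for every $\lambda\in R$. Hence these operations, together with the nullary operation $1_{X/\mathfrak{M}}=[1_{\!X}]_{\mathfrak{M}}$, endow $X/\mathfrak{M}$ with the structure of an $\left(R,\boldsymbol{1}\right)$-magma. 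This well-definedness is the only point at which the normality of $\mathfrak{M}$ is really used, entering through Propositions~\ref{s3.1-1} and \ref{s3.2-1-2}, so it is exactly the place to lean on those results rather than re-derive anything.

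Next I would note that the three defining equalities of Definition~\ref{def:214-1} say precisely that the canonical surjection $\phi_{\mathfrak{M}}\colon X\to X/\mathfrak{M}$, $x\mapsto[x]_{\mathfrak{M}}$, satisfies $\phi_{\mathfrak{M}}(xy)=\phi_{\mathfrak{M}}(x)\phi_{\mathfrak{M}}(y)$, $\phi_{\mathfrak{M}}(\lambda\cdot x)=\lambda\cdot\phi_{\mathfrak{M}}(x)$ and $\phi_{\mathfrak{M}}(1_{\!X})=1_{X/\mathfrak{M}}$, so that $\phi_{\mathfrak{M}}$ is a surjective homomorphism of $\left(R,\boldsymbol{1}\right)$-magmas, as already recorded in the paragraph preceding the statement.

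Finally I would invoke the closure of varieties under homomorphic images recalled in Section~\ref{s12}: since $X$ is a scalable monoid and $X/\mathfrak{M}=\phi_{\mathfrak{M}}(X)$ is a homomorphic image of $X$ within the variety of $\left(R,\boldsymbol{1}\right)$-magmas, the scalable-monoid identities are inherited by $X/\mathfrak{M}$, so $X/\mathfrak{M}$ is a scalable monoid over $R$ and $\phi_{\mathfrak{M}}$ is a homomorphism of scalable monoids. Alternatively, one could verify the monoid axioms and the scaling identities of Definition~\ref{thm:def1} directly on representatives, reducing each to the corresponding identity in $X$ in the same manner as in the proof that $X\otimes Y$ is a scalable monoid; but the variety argument renders such computation unnecessary.
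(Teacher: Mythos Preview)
Your proposal is correct and follows essentially the same approach as the paper: the paper does not even give a separate proof environment for this proposition, instead deriving it in the preceding paragraph exactly as you describe, by noting that Definition~\ref{def:214-1} makes $\phi_{\mathfrak{M}}$ a surjective $\left(R,\boldsymbol{1}\right)$-magma homomorphism and then invoking the variety closure from Section~\ref{s12}.
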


Let us now consider the special case where the submonoid $\mathfrak{M}$
of $X$ considered above is a \emph{scalable} submonoid $M$ so that
$x\in M$ implies $\lambda\cdot x\in M$ for every $\lambda\in R$.
\begin{prop}
If $M$ is a normal scalable submonoid of a scalable monoid $X$ over
$R$ then $X/M$ is a trivially scalable monoid over $R$.
\end{prop}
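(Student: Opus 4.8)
The plan is to show that the scaling action inherited by $X/M$ from $X$ is trivial, i.e. that $\lambda \cdot [x]_{M} = [x]_{M}$ for every $\lambda \in R$ and every $x \in X$. By Proposition \ref{p8} we already know that $X/M$ is a scalable monoid over $R$, so establishing triviality of its scaling action is exactly what remains. Since $\lambda \cdot [x]_{M} = [\lambda \cdot x]_{M}$ by Definition \ref{def:214-1}, the whole statement reduces to verifying the single relation $\lambda \cdot x \sim_{M} x$ for all $\lambda \in R$ and $x \in X$.

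To verify $\lambda \cdot x \sim_{M} x$ I would unwind Definition \ref{def5}: it suffices to exhibit $m, n \in M$ with $m x = n\,(\lambda \cdot x)$. The decisive observation is the bilinearity identity $\lambda \cdot x = (\lambda \cdot 1_{\!X})\,x$, which follows from $\alpha \cdot xy = (\alpha \cdot x)\,y$ applied to $1_{\!X}\,x = x$. Since $M$ is a submonoid it contains $1_{\!X}$, and since $M$ is moreover \emph{scalable}, the element $\lambda \cdot 1_{\!X}$ again lies in $M$. Taking $m = \lambda \cdot 1_{\!X} \in M$ and $n = 1_{\!X} \in M$ then gives $m x = (\lambda \cdot 1_{\!X})\,x = \lambda \cdot x = 1_{\!X}\,(\lambda \cdot x) = n\,(\lambda \cdot x)$, so $x \sim_{M} \lambda \cdot x$ and hence $[\lambda \cdot x]_{M} = [x]_{M}$, as desired.

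There is essentially no hard step once the identity $\lambda \cdot x = (\lambda \cdot 1_{\!X})\,x$ is in hand; the argument is then immediate. The only place where the extra hypothesis on $M$ enters is the membership $\lambda \cdot 1_{\!X} \in M$, and this is precisely what is \emph{not} available for a merely normal (non-scalable) submonoid. This membership is therefore the crux that distinguishes the present proposition from Proposition \ref{p8}, and I would highlight it as the one nontrivial ingredient. I would present the final proof in just a line or two: state the identity $\lambda \cdot x = (\lambda \cdot 1_{\!X})\,x$, note that $\lambda \cdot 1_{\!X} \in M$ because $M$ is a scalable submonoid, conclude $\lambda \cdot x \sim_{M} x$, and read off $\lambda \cdot [x]_{M} = [x]_{M}$, so that $X/M$ is trivially scalable.
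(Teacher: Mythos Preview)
Your proof is correct and essentially identical to the paper's: both reduce to the single observation $1_{\!X}(\lambda\cdot x)=(\lambda\cdot 1_{\!X})x$ with $1_{\!X},\,\lambda\cdot 1_{\!X}\in M$, yielding $\lambda\cdot x\sim_{M}x$ and hence $\lambda\cdot[x]_{M}=[x]_{M}$.
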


\begin{proof}
If $M$ is a scalable submonoid of $X$ then $\left(\lambda\cdot x\right)\sim_{M}x$
for any $\lambda\in R$ and $x\in X$ since $1_{\!X}\left(\lambda\cdot x\right)=\left(\lambda\cdot1_{\!X}\right)x$,
where $1_{\!X},\lambda\cdot1_{\!X}\in M$. Hence, $\lambda\cdot\left[x\right]_{M}=\left[\lambda\cdot x\right]_{M}=\left[x\right]_{M}$
for any $\lambda\in R$ and $\left[x\right]_{M}\in X/M$.
\end{proof}
\begin{prop}
\label{p28-1}If $M$ is a scalable submonoid of a scalable monoid
$X$ over R and $x,y\in X$ then $x\sim y$ implies $x\sim_{\!M\!}y$.
\end{prop}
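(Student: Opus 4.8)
The plan is to reduce scalar multiplication to monoid multiplication by an element of $M$, exploiting the hypothesis that $M$ is a \emph{scalable} submonoid. The key observation is that any scalar multiple $\alpha\cdot x$ can be rewritten as left multiplication by the monoid element $\alpha\cdot1_{\!X}$.

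First I would record the two facts about $M$ that are needed. Since $M$ is a submonoid of $X$, we have $1_{\!X}\in M$; and since $M$ is scalable, $\lambda\cdot1_{\!X}\in M$ for every $\lambda\in R$. Next, using the bilinearity identity $\alpha\cdot xy=\left(\alpha\cdot x\right)y$ from Definition \ref{thm:def1} with $1_{\!X}$ as the left factor, together with $1_{\!X}x=x$, I would rewrite an arbitrary scalar multiple as
\[
\alpha\cdot x=\alpha\cdot\left(1_{\!X}x\right)=\left(\alpha\cdot1_{\!X}\right)x.
\]

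Finally, suppose $x\sim y$, so that $\alpha\cdot x=\beta\cdot y$ for some $\alpha,\beta\in R$. Applying the rewriting above to each side converts this equation into $\left(\alpha\cdot1_{\!X}\right)x=\left(\beta\cdot1_{\!X}\right)y$. Setting $m=\alpha\cdot1_{\!X}$ and $n=\beta\cdot1_{\!X}$, both of which lie in $M$ by the first step, this reads $mx=ny$ with $m,n\in M$, which is exactly the defining condition for $x\sim_{\!M\!}y$ in Definition \ref{def5}.

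There is no real obstacle here: the argument collapses to a single computation once the identity $\lambda\cdot x=\left(\lambda\cdot1_{\!X}\right)x$ is spotted. The only point demanding care is the precise role of the scalability hypothesis on $M$, which is exactly what guarantees $\alpha\cdot1_{\!X},\beta\cdot1_{\!X}\in M$; for a submonoid that is merely normal but not scalable these witnesses need not belong to $M$, and the implication could fail, so the proof must make this use of the hypothesis explicit.
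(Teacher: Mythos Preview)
Your proof is correct and follows essentially the same approach as the paper: both rewrite $\alpha\cdot x=\beta\cdot y$ as $(\alpha\cdot1_{\!X})x=(\beta\cdot1_{\!X})y$ via the bilinearity identity, and then observe that $\alpha\cdot1_{\!X},\beta\cdot1_{\!X}\in M$ because $M$ is a scalable submonoid. Your version is simply more explicit about why the witnesses lie in $M$.
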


\begin{proof}
If if $\alpha\cdot x=\beta\cdot y$ for some $\alpha,\beta\in R$
then 
\[
\left(\alpha\cdot1_{\!X}\right)x=\alpha\cdot1_{\!X}x=\beta\cdot1_{\!X}y=\left(\beta\cdot1_{\!X}\right)y.
\]
This implies the assertion since $\alpha\cdot1_{\!X},\beta\cdot1_{X}\in M$. 
\end{proof}
\begin{prop}
\label{p29-1}Let $X$ be a scalable monoid over R and $x,y\in X$.
Then $R\cdot1_{\!X}$ is a \textcolor{black}{normal} scalable submonoid
of $X$, and $x\sim_{R\cdot1_{\!X}}\!y$ implies $x\sim y$.
\end{prop}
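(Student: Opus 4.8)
The plan is to verify the two assertions in turn, both following routinely from Definition~\ref{thm:def1} and Lemma~\ref{thm:lem1}; the second assertion is essentially the converse of Proposition~\ref{p28-1} specialized to the submonoid $M = R\cdot 1_{\!X}$, so that taken together these two results show that $\sim$ and $\sim_{R\cdot 1_{\!X}}$ coincide.

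First I would check that $R\cdot 1_{\!X}$ is a scalable submonoid. It contains the identity, since $1_{\!X} = 1\cdot 1_{\!X}$; it is closed under multiplication, because Lemma~\ref{thm:lem1} gives $(\alpha\cdot 1_{\!X})(\beta\cdot 1_{\!X}) = \alpha\beta\cdot 1_{\!X}1_{\!X} = \alpha\beta\cdot 1_{\!X}$; and it is closed under the scaling action, because $\lambda\cdot(\alpha\cdot 1_{\!X}) = \lambda\alpha\cdot 1_{\!X}$ by the associativity axiom of Definition~\ref{thm:def1}. In each case the output again has the form $\gamma\cdot 1_{\!X}$ for some $\gamma\in R$, hence lies in $R\cdot 1_{\!X}$.

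Next I would establish normality by proving the stronger property that $R\cdot 1_{\!X}$ is \emph{central}, after which the observation preceding Proposition~\ref{s3.1-1} (every central submonoid is normal) applies. For any $\lambda\in R$ and $x\in X$, the two bilinearity identities give $(\lambda\cdot 1_{\!X})x = \lambda\cdot 1_{\!X}x = \lambda\cdot x$ and $x(\lambda\cdot 1_{\!X}) = \lambda\cdot x1_{\!X} = \lambda\cdot x$, so $(\lambda\cdot 1_{\!X})x = x(\lambda\cdot 1_{\!X})$; thus every element of $R\cdot 1_{\!X}$ commutes with every element of $X$, and $R\cdot 1_{\!X}$ is normal.

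Finally, for the implication $x\sim_{R\cdot 1_{\!X}}y \Rightarrow x\sim y$, I would unpack Definition~\ref{def5}: the hypothesis means $mx = ny$ for some $m,n\in R\cdot 1_{\!X}$, that is, $(\alpha\cdot 1_{\!X})x = (\beta\cdot 1_{\!X})y$ for some $\alpha,\beta\in R$. Applying the bilinearity computation just made, the left-hand side equals $\alpha\cdot x$ and the right-hand side equals $\beta\cdot y$, so $\alpha\cdot x = \beta\cdot y$, which is exactly the defining condition for $x\sim y$. I do not anticipate any genuine obstacle: the whole argument is a direct application of the scaling axioms together with Lemma~\ref{thm:lem1}, and the only point requiring a little care is keeping the two bilinearity identities $\alpha\cdot xy = (\alpha\cdot x)y$ and $\alpha\cdot xy = x(\alpha\cdot y)$ straight when simplifying products involving $1_{\!X}$.
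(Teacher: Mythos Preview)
Your proof is correct and essentially identical to the paper's: both verify that $R\cdot 1_{\!X}$ contains $1_{\!X}$, is closed under multiplication and scaling, and is central (hence normal) via the bilinearity identities, and both obtain the implication $x\sim_{R\cdot 1_{\!X}}y\Rightarrow x\sim y$ by rewriting $(\alpha\cdot 1_{\!X})x=(\beta\cdot 1_{\!X})y$ as $\alpha\cdot x=\beta\cdot y$. The only cosmetic difference is that the paper chains the normality computation as $(\alpha\cdot 1_{\!X})x=\alpha\cdot 1_{\!X}x=\alpha\cdot x1_{\!X}=x(\alpha\cdot 1_{\!X})$ rather than explicitly noting both sides equal $\alpha\cdot x$.
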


\begin{proof}
Consider any $\alpha,\beta,\lambda\in R,x\in X$. $R\cdot1_{\!X}$
is (1) a submonoid of $X$ since $1_{\!X}=1\cdot1_{\!X}$ and $\left(\alpha\cdot1_{\!X}\right)\left(\beta\cdot1_{\!X}\right)=\alpha\beta\cdot1_{\!X}$
where $1,\alpha\beta\in R$, (2) a scalable submonoid of $X$ since
$\lambda\cdot\left(\alpha\cdot1_{\!X}\right)=\lambda\alpha\cdot1_{\!X}$
where $\lambda\alpha\in R$, and (3) a \textcolor{black}{normal} submonoid
of $X$ since $\left(\alpha\cdot1_{\!X}\right)x=\alpha\cdot1_{\!X}x=\alpha\cdot x1_{\!X}=x\left(\alpha\cdot1_{\!X}\right)$.
Also, if $x\sim_{R\cdot1_{\!X}}y$ then 
\[
\alpha\cdot1_{\!X}x=\left(\alpha\cdot1_{\!X}\right)x=\left(\beta\cdot1_{\!X}\right)y=\beta\cdot1_{\!X}y
\]
 for some $\alpha,\beta\in R$, so $x\sim_{R\cdot1_{\!X}}\!y$ implies
$x\sim y$. 
\end{proof}
It follows from Propositions \ref{p28-1} and \ref{p29-1} that $x\sim_{M}y$
generalizes $x\sim y$.
\begin{cor}
Let $X$ be a scalable monoid over $R$ and $x,y\in X$. Then $R\cdot1_{\!X}$
is a \textcolor{black}{normal} scalable submonoid of $X$, and $x\sim_{R\cdot1_{\!X}}\!y$
if and only if $x\sim y$. 
\end{cor}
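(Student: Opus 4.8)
The plan is to recognize that this corollary is nothing more than the conjunction of Propositions \ref{p28-1} and \ref{p29-1}, assembled into a single biconditional. No new computation is required; the entire task is to check that the two propositions fit together and cover both directions of the claimed equivalence.

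First I would dispose of the structural assertion: the statement that $R\cdot1_{\!X}$ is a normal scalable submonoid of $X$ is already established, being the first assertion of Proposition \ref{p29-1}. So I would simply invoke that proposition rather than reprove parts (1)--(3) there. The remaining content is the biconditional $x\sim_{R\cdot1_{\!X}}y \iff x\sim y$, which I would split into its two implications.

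For the forward direction, $x\sim_{R\cdot1_{\!X}}y \Rightarrow x\sim y$, I would cite the second assertion of Proposition \ref{p29-1} verbatim. For the converse, $x\sim y \Rightarrow x\sim_{R\cdot1_{\!X}}y$, I would apply Proposition \ref{p28-1} with the scalable submonoid $M$ taken to be $R\cdot1_{\!X}$. This application is legitimate precisely because Proposition \ref{p29-1} guarantees that $R\cdot1_{\!X}$ is a scalable submonoid, which is the hypothesis of Proposition \ref{p28-1}. Putting the two implications together gives the equivalence.

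There is no real obstacle here, which is why it is stated as a corollary rather than a proposition. The only point worth verifying is that the hypothesis of Proposition \ref{p28-1}, namely that $M$ be a scalable submonoid, is met by the specific choice $M=R\cdot1_{\!X}$; but this is exactly what Proposition \ref{p29-1} supplies, so the two results dovetail without any gap. I would keep the write-up to a sentence or two, referencing both propositions explicitly.
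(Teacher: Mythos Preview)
Your proposal is correct and matches the paper's approach exactly: the paper states the corollary immediately after noting ``It follows from Propositions \ref{p28-1} and \ref{p29-1} that $x\sim_{M}y$ generalizes $x\sim y$,'' and gives no separate proof. Your observation that Proposition \ref{p29-1} supplies the hypothesis needed to apply Proposition \ref{p28-1} with $M=R\cdot1_{\!X}$ is precisely the dovetailing the paper intends.
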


For example, $\kappa\cdot\lambda x_{1}^{k_{1}}\ldots x_{n}^{k_{n}}=\left(\kappa\cdot1x_{1}^{0}\ldots x_{n}^{0}\right)\left(\lambda x_{1}^{k_{1}}\ldots x_{n}^{k_{n}}\right)$
for any $\kappa\in R$, so $R\left\llbracket x_{1};\ldots;x_{n}\right\rrbracket /{\sim}$
and $R\left\llbracket x_{1};\ldots;x_{n}\right\rrbracket /\left(R\cdot1_{\!X}\right)$
are isomorphic monoids.

\subsection{\label{subsec:Commensurability-classes-with}A non-trivial orbitoid
with a unit element is a free module of rank 1}

Recall the principle that magnitudes of the same kind can be added
and subtracted, whereas magnitudes of different kinds cannot be combined
by these operations. Also recall the idea that a quantity can be represented
by a ''unit'' and a number (measure) specifying ''{[}how many{]}
times the {[}unit{]} is to be taken in order to make up'' that quantity
\cite[p. 41]{MAXW}. As shown below, there is a connection between
these two notions.

Specifically, it may happen that $R\cdot u\supseteq\left[u\right]$
for some $u\in\left[u\right]$, and if in addition a natural uniqueness
condition is satisfied we may regard $u$ as a unit of measurement
for $\left[u\right]$. If such a unit exists then a sum of elements
of $\left[u\right]$ can be defined by the construction described
in Definition \ref{d3.3} below.
\begin{defn}
\label{d26}Let $\mathsf{C}$ be an orbitoid in a scalable monoid
over $R$. A \emph{generating element} for $\mathsf{C}$ is some $u\in\mathsf{C}$
such that for every $x\in\mathsf{C}$ there is some $\lambda\in R$
such that $x=\lambda\cdot u$. A \emph{unit element for $\mathsf{C}$}
is a generating element $u$ for $\mathsf{C}$ such that if $\lambda\cdot u=\lambda'\cdot u$
then $\lambda=\lambda'$. 
\end{defn}

By this definition, if $u$ is a generating element for $\mathsf{C}=\left[u\right]$
then $R\cdot u\supseteq\mathsf{C}$. On the other hand, $\lambda\cdot u\sim u$
for any $\lambda\in R$, so $\lambda\cdot u\in\left[u\right]$ for
any $\lambda\in R$, so $R\cdot u\subseteq\left[u\right]$. Thus,
actually $R\cdot u=\mathsf{C}$.

We now need to consider zero elements in scalable monoids.
\begin{prop}
Let $X$ be a scalable monoid over $R$. For every $\mathsf{C}\in X/{\sim}$
there is a unique $0_{\mathsf{C}}\in\mathsf{C}$ such that $0_{\mathsf{C}}=0\cdot x$
for all $x\in\mathsf{C}$.
\end{prop}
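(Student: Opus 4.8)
The plan is to define $0_{\mathsf{C}}$ as $0\cdot x$ for an arbitrary representative $x\in\mathsf{C}$, and then to verify in turn that this element lies in $\mathsf{C}$, that it does not depend on the representative chosen, and that it is the unique element with the stated property. The heart of the matter is the independence of representative, so I would isolate that as the first genuine step; the membership and uniqueness claims will then follow almost immediately.

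First I would show that $0\cdot x=0\cdot y$ whenever $x\sim y$. If $x\sim y$ then, by Definition \ref{d3.1}, there are $\alpha,\beta\in R$ with $\alpha\cdot x=\beta\cdot y$. Applying the scaling-action axiom $\gamma\cdot(\delta\cdot z)=\gamma\delta\cdot z$ of Definition \ref{thm:def1} (with $\gamma=0$) to both sides and using the ring identities $0\alpha=0\beta=0$, I obtain
\[
0\cdot x=(0\alpha)\cdot x=0\cdot(\alpha\cdot x)=0\cdot(\beta\cdot y)=(0\beta)\cdot y=0\cdot y.
\]
Thus the element $0\cdot x$ is one and the same for every $x\in\mathsf{C}$; I name this common value $0_{\mathsf{C}}$, so that $0_{\mathsf{C}}=0\cdot x$ holds for all $x\in\mathsf{C}$ by construction, which is exactly the existence part of the assertion.

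Next I would confirm membership. By Corollary \ref{c31} we have $\lambda\cdot x\sim x$ for every $\lambda\in R$; taking $\lambda=0$ gives $0\cdot x\sim x$, hence $0\cdot x\in[x]=\mathsf{C}$, so $0_{\mathsf{C}}\in\mathsf{C}$ as required. Finally, uniqueness is trivial: since $\mathsf{C}$ is nonempty, fix any $x_{0}\in\mathsf{C}$; then any $z\in\mathsf{C}$ satisfying $z=0\cdot x$ for all $x\in\mathsf{C}$ must in particular satisfy $z=0\cdot x_{0}=0_{\mathsf{C}}$. The only step demanding any care is the representative-independence computation displayed above, and even that reduces at once to the scaling axiom together with the fact that $0$ annihilates under multiplication in $R$, so I anticipate no real obstacle.
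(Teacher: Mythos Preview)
Your proof is correct and follows essentially the same route as the paper: the core step in both is the computation $0\cdot x=0\alpha\cdot x=0\cdot(\alpha\cdot x)=0\cdot(\beta\cdot y)=0\beta\cdot y=0\cdot y$ from $\alpha\cdot x=\beta\cdot y$. You are simply more explicit than the paper in separately recording membership (via Corollary \ref{c31}) and the trivial uniqueness clause, whereas the paper subsumes these under ``only uniqueness needs to be proved'' and writes out just the displayed identity.
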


\begin{proof}
Only uniqueness needs to be proved. If $x,y\in\mathsf{C}$ then $\alpha\cdot x=\beta\cdot y$
for some $\alpha,\beta\in R$, so $0\cdot x=0\alpha\cdot x=0\cdot\left(\alpha\cdot x\right)=0\cdot\left(\beta\cdot y\right)=0\beta\cdot y=0\cdot y$.
\end{proof}
We call $0_{\mathsf{C}}$ the \emph{zero element} of $\mathsf{C}$;
note that distinct orbitoids have distinct zero elements since $0_{\mathsf{A}}=0_{\mathsf{B}}$
implies $\mathsf{A}=\left[0_{\mathsf{A}}\right]=\left[0_{\mathsf{B}}\right]=\mathsf{B}$.
It is clear that $\lambda\cdot0_{\mathsf{C}}=0_{\mathsf{C}}$ for
all $\lambda\in R$, and that $0_{\left[x\right]}y=0_{\left[xy\right]}$
and $y0_{\left[x\right]}=0_{\left[yx\right]}$ for all $x,y\in X$. 

A \emph{trivial} orbitoid is an orbitoid $\mathsf{C}=\left\{ 0_{\mathsf{C}}\right\} $.%

We now turn to a lemma and a definition leading to Proposition \ref{p5}.
\begin{lem}
\label{L22}Let X be a scalable monoid over $R$. If $u$ and $u'$
are unit elements for $\mathsf{C}\in X/{\sim}$, $\rho,\sigma,\rho',\sigma'\in R$,
$\rho\cdot u=\rho'\cdot u'$ and $\sigma\cdot u=\sigma'\cdot u'$
then $\left(\rho+\sigma\right)\cdot u=\left(\rho'+\sigma'\right)\cdot u'$. 
\end{lem}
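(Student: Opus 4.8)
The plan is to reduce everything to a single generating element and then apply the uniqueness clause in the definition of a unit element. First, since $u$ is a generating element for $\mathsf{C}$ (Definition~\ref{d26}) and $u'\in\mathsf{C}$, there is some $\tau\in R$ with $u'=\tau\cdot u$. The purpose of this step is to rewrite the right-hand sides $\rho'\cdot u'$ and $\sigma'\cdot u'$ entirely in terms of the single element $u$, for which I have a uniqueness property available.

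Next I would substitute $u'=\tau\cdot u$ into the two hypotheses. Using the scaling-action identity $\alpha\cdot(\beta\cdot x)=\alpha\beta\cdot x$ from Definition~\ref{thm:def1}, the hypothesis $\rho\cdot u=\rho'\cdot u'$ becomes $\rho\cdot u=\rho'\tau\cdot u$, and similarly $\sigma\cdot u=\sigma'\tau\cdot u$. Since $u$ is a unit element, the uniqueness clause of Definition~\ref{d26} then forces the ring identities $\rho=\rho'\tau$ and $\sigma=\sigma'\tau$.

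Finally I would compute the target directly. Starting from $(\rho'+\sigma')\cdot u'$, I substitute $u'=\tau\cdot u$ and apply the action identity once more to obtain $(\rho'+\sigma')\tau\cdot u$; then right distributivity in $R$ gives $(\rho'+\sigma')\tau=\rho'\tau+\sigma'\tau$, and the two equalities just derived replace this by $\rho+\sigma$. Hence $(\rho'+\sigma')\cdot u'=(\rho+\sigma)\cdot u$, which is the desired conclusion.

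There is no genuine obstacle here; the only point requiring care is that $R$ need not be commutative, so throughout I must keep $\tau$ on the right and invoke right distributivity rather than commute any factors. It is worth noting that the argument never uses invertibility of $\tau$, nor does it use that $u'$ is itself a \emph{unit} element: only the generating and uniqueness properties of $u$, together with $u'\in\mathsf{C}$, are needed, so the unit-element hypothesis on $u'$ enters only through the membership $u'\in\mathsf{C}$.
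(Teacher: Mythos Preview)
Your proof is correct and matches the paper's argument essentially line for line: write $u'=\tau\cdot u$, deduce $\rho=\rho'\tau$ and $\sigma=\sigma'\tau$ from the unit-element uniqueness for $u$, then expand $(\rho'+\sigma')\cdot u'$ using right distributivity. Your closing remark that only $u'\in\mathsf{C}$ (not the full unit-element hypothesis on $u'$) is needed is a valid observation that the paper does not make explicit.
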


\begin{proof}
As $u'\in\mathsf{C}$, there is a unique $\tau\in R$ such that $u'=\tau\cdot u$.
Thus, 
\begin{gather*}
\rho\cdot u=\rho'\cdot u'=\rho'\cdot\left(\tau\cdot u\right)=\rho'\tau\cdot u,\\
\sigma\cdot u=\sigma'\cdot u'=\sigma'\cdot\left(\tau\cdot u\right)=\sigma'\tau\cdot u,\\
\left(\rho'+\sigma'\right)\cdot u'=\left(\rho'+\sigma'\right)\cdot\left(\tau\cdot u\right)=\left(\rho'+\sigma'\right)\tau\cdot u=\left(\rho'\tau+\sigma'\tau\right)\cdot u,
\end{gather*}
so $\left(\rho'+\sigma'\right)\cdot u'=\left(\rho'\tau+\sigma'\tau\right)\cdot u=\left(\rho+\sigma\right)\cdot u$
since $\rho=\rho'\tau$ and $\sigma=\sigma'\tau$.
\end{proof}
Hence, the sum of two elements of a scalable monoid can be defined
as follows.
\begin{defn}
\label{d3.3}Let $X$ be a scalable monoid over $R$, and let $u$
be a unit element for $\mathsf{C}\in X/{\sim}$. If $x=\rho\cdot u$
and $y=\sigma\cdot u$, where $\rho,\sigma\in R$, we set 
\[
x+y=\left(\rho+\sigma\right)\cdot u.
\]
\end{defn}

The sum $x+y$ is given by Definition \ref{d3.3} if and only if $x$
and $y$ are commensurable and their orbitoid has a unit element.
This suggests again that the concept of commensurability introduced
in Definition \ref{d26} can be used to define the ancient Greek notion
of magnitudes of the same kind, and to clarify the modern notion of
quantities of the same kind.

It follows immediately from Definition \ref{d3.3} that 
\[
\left(x+y\right)+z=x+\left(y+z\right),\qquad x+y=y+x
\]
for all $x,y,z\in\mathsf{C}$, and that 
\[
x+0_{\mathsf{C}}=0_{\mathsf{C}}+x=x
\]
 for any $x\in\mathsf{C}$ since $0_{\mathsf{C}}=0\cdot u$. 

If $x=\rho\cdot u$ so that $\lambda\cdot x=\lambda\rho\cdot u$ and
$\kappa\cdot x=\kappa\rho\cdot u$ then 
\begin{gather*}
\left(\lambda+\kappa\right)\cdot x=\left(\lambda+\kappa\right)\cdot\left(\rho\cdot u\right)=\left(\lambda+\kappa\right)\rho\cdot u=\left(\lambda\rho+\kappa\rho\right)\cdot u=\lambda\cdot x+\kappa\cdot x,
\end{gather*}
and if $x=\rho\cdot u$ and $y=\sigma\cdot u$ so that $\lambda\cdot x=\lambda\rho\cdot u$
and $\lambda\cdot y=\lambda\sigma\cdot u$ then 
\begin{gather*}
\lambda\cdot\left(x+y\right)=\lambda\cdot\left(\left(\rho+\sigma\right)\cdot u\right)=\lambda\left(\rho+\sigma\right)\cdot u=\left(\lambda\rho+\lambda\sigma\right)\cdot u=\lambda\cdot x+\lambda\cdot y.
\end{gather*}

A unital ring $R$ has a unique additive inverse $-1$ of $1\in R$,
and we set 
\[
-x=\left(-1\right)\cdot x
\]
for all $x\in X$. If $\mathsf{C}$ has a unit  element $u$ and $x=\rho\cdot u$
for some $\rho\in R$ then $-x=\left(-1\right)\cdot\left(\rho\cdot u\right)=\left(-\rho\right)\cdot u$,
and using this fact it is easy to verify that
\[
x+\left(-x\right)=-x+x=0_{\mathsf{C}}.
\]
As usual, we may write $x+\left(-y\right)$ as $x-y$, and thus $x+\left(-x\right)$
as $x-x$.

While a trivial orbitoid is a zero module $\left\{ 0_{\mathsf{C}}\right\} $
with $0_{\mathsf{C}}+0_{\mathsf{C}}=0_{\mathsf{C}}$ and $\lambda\cdot0_{\mathsf{C}}=0_{\mathsf{C}}$
for all $\lambda\in R$, a non-trivial orbitoid with a unit element
is a well-behaved module.
\begin{prop}
\label{p5}Let $X$ be a scalable monoid over $R$. If $\mathsf{C}\in X/{\sim}$
is a non-trivial orbitoid with a unit element then $R$ is \textup{\emph{a
non-trivial commutative ring, and}} $\mathsf{C}$, with appropriate
definitions of $x+y$ and $\lambda\cdot x$, is a free module of rank
1 over $R$\textup{\emph{.}}
\end{prop}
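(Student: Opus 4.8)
The plan is to use the unit element $u$ for $\mathsf{C}$ as a rank-one basis: it simultaneously forces the required ring-theoretic properties and exhibits the isomorphism $R\cong\mathsf{C}$. Almost every module axiom has already been verified in the paragraphs preceding the statement, so the genuinely new content is confined to the two claims about $R$ together with the construction of the isomorphism.

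First I would settle the two assertions about $R$. For non-triviality, I would pick some $x\in\mathsf{C}$ with $x\neq 0_{\mathsf{C}}$ (possible since $\mathsf{C}$ is non-trivial), write $x=\kappa\cdot u$ via Definition \ref{d26}, and note $0_{\mathsf{C}}=0\cdot u$; if $\kappa=0$ then $x=0_{\mathsf{C}}$, a contradiction, so $\kappa\neq 0$ and $R$ is not the zero ring. For commutativity, I would invoke Lemma \ref{thm:lem1}, which gives $\alpha\beta\cdot u=\beta\alpha\cdot u$ for all $\alpha,\beta\in R$; since $u$ is a \emph{unit} element, the cancellation clause of Definition \ref{d26} yields $\alpha\beta=\beta\alpha$, so $R$ is commutative.

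Next I would assemble the module structure on $\mathsf{C}$. The preceding text already shows that $(\mathsf{C},+)$ is an abelian group with identity $0_{\mathsf{C}}$ and inverses $-x=(-1)\cdot x$, that $\mathsf{C}$ is closed under the scaling action (since $\lambda\cdot x\sim x$ by Corollary \ref{c31}, so $\lambda\cdot x\in\mathsf{C}$), and that both distributive laws $(\lambda+\kappa)\cdot x=\lambda\cdot x+\kappa\cdot x$ and $\lambda\cdot(x+y)=\lambda\cdot x+\lambda\cdot y$ hold. Combining these with $1\cdot x=x$ and $\lambda\cdot(\kappa\cdot x)=\lambda\kappa\cdot x$ from Definition \ref{thm:def1} gives exactly the axioms for a module over the (now commutative) ring $R$.

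Finally, for freeness of rank $1$ I would check that $\varphi\colon R\to\mathsf{C}$, $\lambda\mapsto\lambda\cdot u$, is an $R$-module isomorphism: it is additive because $(\lambda+\mu)\cdot u=\lambda\cdot u+\mu\cdot u$ by Definition \ref{d3.3}, $R$-linear because $(\mu\lambda)\cdot u=\mu\cdot(\lambda\cdot u)$ by Definition \ref{thm:def1}, surjective because $u$ generates $\mathsf{C}$, and injective by the unit cancellation property. Hence $\{u\}$ is a basis and $\mathsf{C}$ is free of rank $1$. I expect the only delicate step to be the commutativity argument: it is the single place where one must turn the ``weak'' identity $\alpha\beta\cdot u=\beta\alpha\cdot u$, valid in \emph{any} scalable monoid, into honest commutativity of $R$, and this conversion succeeds precisely because $u$ is a unit rather than a mere generating element; everything else is bookkeeping of identities already in hand.
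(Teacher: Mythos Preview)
Your proof is correct and follows essentially the same route as the paper: non-triviality and commutativity of $R$ via the unit element and Lemma~\ref{thm:lem1}, the module axioms already verified in the preceding paragraphs, and $\{u\}$ as a one-element basis. The only cosmetic difference is that the paper explicitly invokes the invariant basis number property of non-trivial commutative rings (citing \cite{RICH}) to justify that ``rank~$1$'' is well-defined, whereas you establish this implicitly by exhibiting the isomorphism $R\cong\mathsf{C}$ directly.
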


\begin{proof}
Let $u$ be a unit element for $\mathsf{C}$. If $0_{\mathsf{C}}\neq x\in\mathsf{C}$,
$0_{\mathsf{C}}=\lambda\cdot u$ and $x=\kappa\cdot u$ for some $\lambda,\kappa\in R$
then $\lambda\neq\kappa,$ so $R$ is non-trivial.%
{} We also have $\alpha\beta\cdot u=\beta\alpha\cdot u$ for any $\alpha,\beta\in R$
by Lemma \ref{thm:lem1}, so $\alpha\beta=\beta\alpha$ since $u$
is a unit element.

We have seen that $\mathsf{C}$ is a module with addition given by
Definition \ref{d3.3} and scalar multiplication inherited from the
scalar multiplication in $X$. Also, if $u$ is a unit element for
$\mathsf{C}$ then $\left\{ u\right\} $ is a basis for $\mathsf{C}$,
and $R$ has the invariant basis number property since it is non-trivial
and commutative \cite{RICH}.
\end{proof}
Thus, if every orbitoid $\mathsf{C}\in X/{\sim}$ contains a non-zero
unit element for $\mathsf{C}$ then $X$ is the union of disjoint
isomorphic free modules of rank 1 over a non-trivial commutative ring,
a result that may be compared to definitions of systems of quantities
in terms of unions of one-dimensional vector spaces by Quade \cite{QUAD}
and Raposo \cite{RAP}.

Recall that identities corresponding to $\left(\lambda+\kappa\right)\cdot x=\lambda\cdot x+\kappa\cdot x,$
$\lambda\cdot\left(x+y\right)=\lambda\cdot x+\lambda\cdot y$ and
$\lambda\cdot\left(\kappa\!\cdot\!x\right)=\lambda\kappa\cdot x$
were proved in Propositions 1\textendash 3 in Book V of the \emph{Elements,}
so rudiments of Proposition \ref{p5} were present already in the
Greek theory of magnitudes.

\subsection{\label{s25}Scalable monoids with sets of unit elements }

In this section, we build on the discussion in the two previous sections
about unit elements and quotients of scalable monoids by normal monoids. 
\begin{defn}
\label{d8}A \emph{dense} set of elements of a scalable monoid $X$
is a set $U$ of elements of $X$ such that for every $x\in X$ there
is some $u\in U$ such that $u\sim x$. A \emph{sparse} set of elements
of $X$ is a set $U$ of elements of $X$ such that $u\sim v$ implies
$u=v$ for any $u,v\in U$. A \emph{closed} set of elements of $X$
is a set $U$ of elements of $X$ such that if $u,v\in U$ then $uv\in U$. 

We call a (dense) sparse set of unit elements of $X$ a \emph{(complete)
system of unit elements }for $X$.
\end{defn}

\begin{defn}
\label{d9}A \emph{distributive} scalable monoid $X$ is a scalable
monoid such that for all $\mathsf{A},\mathsf{B}\in X/{\sim}$ we have
\[
\left(x+y\right)z=xz+yz,\qquad z\left(x+y\right)=zx+zy,
\]
for all $x,y\in\mathsf{A}$ and all $z\in\mathsf{B}$.
\end{defn}

\begin{prop}
\label{p11}Let $X$ be a scalable monoid. If $X$ is equipped with
a dense closed set of unit elements $U$ then $X$ is a distributive
scalable monoid.
\end{prop}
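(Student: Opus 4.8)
The plan is to exploit the density and closedness of $U$ to reduce the two distributive identities to the ordinary one-sided distributive laws in the ring $R$. Fix orbitoids $\mathsf{A},\mathsf{B}\in X/{\sim}$, elements $x,y\in\mathsf{A}$, and $z\in\mathsf{B}$. Since $U$ is dense, each of $\mathsf{A}$ and $\mathsf{B}$ contains an element of $U$; call them $u\in U\cap\mathsf{A}$ and $v\in U\cap\mathsf{B}$, and note that each is a unit element for its own orbitoid. First I would use that $u$ generates $\mathsf{A}$ and $v$ generates $\mathsf{B}$ to write $x=\rho\cdot u$, $y=\sigma\cdot u$ and $z=\tau\cdot v$ for (unique) $\rho,\sigma,\tau\in R$. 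By Definition \ref{d3.3}, the sum $x+y$ is then $(\rho+\sigma)\cdot u$, which is well defined since $\mathsf{A}$ has the unit element $u$.

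Next I would compute both sides of $(x+y)z=xz+yz$ explicitly. By Lemma \ref{thm:lem1} the left side is $((\rho+\sigma)\cdot u)(\tau\cdot v)=(\rho+\sigma)\tau\cdot uv$, while $xz=\rho\tau\cdot uv$ and $yz=\sigma\tau\cdot uv$. The crucial observation is that $uv\in U$ because $U$ is closed, so $uv$ is a unit element for its orbitoid, which by Corollary \ref{c31} is exactly $[xz]=[yz]$. Hence $xz+yz$ is defined, and evaluating it through Definition \ref{d3.3} with respect to the unit element $uv$ yields $xz+yz=(\rho\tau+\sigma\tau)\cdot uv$. The identity $(x+y)z=xz+yz$ then follows from right distributivity $(\rho+\sigma)\tau=\rho\tau+\sigma\tau$ in $R$. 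The identity $z(x+y)=zx+zy$ is handled symmetrically, this time with $vu\in U$ serving as the unit element for $[zx]=[zy]$ and left distributivity $\tau(\rho+\sigma)=\tau\rho+\tau\sigma$ in $R$ closing the argument. Since only the one-sided distributive laws of $R$ are invoked, no commutativity of $R$ is needed.

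The main obstacle — really the heart of the argument — is checking that $xz+yz$ is legitimately defined and is computed with respect to the \emph{same} unit element $uv$ produced when $(x+y)z$ is expanded. This is precisely where closedness of $U$ is indispensable: without it, density would still furnish \emph{some} unit element of the orbitoid $[uv]$, but that element need not be $uv$ itself, and Definition \ref{d3.3} would then express $xz+yz$ in a coordinate unrelated to the factorization of $(x+y)z$, obstructing a direct comparison. Closedness guarantees that $uv$ is a unit element, so both sides are read off in the same coordinate and the matching is immediate. Lemma \ref{L22} assures that the value of the sum is independent of the chosen unit element, so the conclusion is robust; selecting $uv$ merely makes the bookkeeping transparent.
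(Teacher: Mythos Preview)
Your proof is correct and matches the paper's argument essentially line for line: pick $u,v\in U$ by density, expand $x,y,z$ as scalar multiples of $u,v$, push the products through via Lemma \ref{thm:lem1}, and invoke closedness of $U$ so that $uv$ and $vu$ are unit elements, reducing both distributive identities to the corresponding one-sided laws in $R$. Your closing paragraph on why closedness (and not merely density) is needed is a helpful gloss but not a departure from the paper's method.
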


\begin{proof}
For all $x,y\in\mathsf{A}$ and $z\in\mathsf{B}$ there are $u,v\in U$
such that $\left[x\right]=\left[y\right]=\left[u\right]$ and $\left[z\right]=\left[v\right]$
since $U$ is dense in $X$, so $x=\rho\cdot u$, $y=\sigma\cdot u$
and $z=\tau\cdot v$ for some $\rho,\sigma,\tau\in R$, so $xz=\rho\tau\cdot uv,$
$yz=\sigma\tau\cdot uv$, $zx=\tau\rho\cdot vu$ and $zy=\tau\sigma\cdot vu$,
so
\begin{gather*}
\left(x+y\right)z=\left(\left(\rho+\sigma\right)\cdot u\right)\left(\tau\cdot v\right)=\left(\rho+\sigma\right)\tau\cdot uv=\left(\rho\tau+\sigma\tau\right)\cdot uv=xz+yz,\\
z\left(x+y\right)=\left(\tau\cdot v\right)\left(\left(\rho+\sigma\right)\cdot u\right)=\tau\left(\rho+\sigma\right)\cdot vu=\left(\tau\rho+\tau\sigma\right)\cdot vu=zx+zy,
\end{gather*}
using the fact that $uv$ and $vu$ are unit elements since $U$ is
closed.
\end{proof}
We also define a natural notion which is fundamental in metrology.
\begin{defn}
\label{d12}A \emph{coherent }system of unit elements for $X$ is
a submonoid of $X$ which is a complete system of unit elements for
$X$.
\end{defn}

Recall that if $T$ is a normal submonoid of a scalable monoid $X$
then $X/T$ is a scalable monoid by Proposition \ref{p8}. It is proved
in Appendix A that if $S\supseteq T$ is a coherent system of unit
elements for $X$ then $S/T$ is a coherent system of unit elements
for $X/T$.%

\section{\label{sec:Quantity-spaces}Quantity spaces}

In this section, we specialize scalable monoids in order to obtain
a mathematical model suitable for calculation with quantities, a quantity
space.

The formal definition of a quantity space is given in Section \ref{s21},
and some basic facts about quantity spaces are presented in Section
\ref{s23}. Coherent systems of unit quantities for quantity spaces
are discussed in Section \ref{s24}. The notion of a measure of a
quantity is formally defined in Section \ref{s25-1}, and ways in
which measures serve as proxies for quantities are described. In Section
\ref{s26}, we show that the monoid of dimensions $Q/{\sim}$ corresponding
to a quantity space $Q$ is a free abelian group and derive some related
results. 

\subsection{\label{s21}Canonical construction and main definition}

It is possible to give an abstract definition of scalable monoids
of the form $R\left\llbracket x_{1};\ldots;x_{n}\right\rrbracket $
(Example \ref{ex1}). Let $X$ be a commutative scalable monoid over
a commutative ring $R$. A \emph{finite scalable-monoid basis} for\emph{
$X$} is a finite set $\left\{ e_{1},\ldots,e_{n}\right\} $ of elements
of $X$ such that every $x\in X$ has a unique expansion 
\[
x=\mu\cdot\prod_{i=1}^{n}\nolimits e_{i}^{_{_{k_{i}}}},
\]
where $\mu\in R$ and $k_{i}$ are non-negative integers.%
. In abstract terms, $R\left\llbracket x_{1};\ldots;x_{n}\right\rrbracket $
is a %
commutative scalable monoid $X$ over a commutative ring, such that
there exists a finite scalable-monoid basis for $X$.

Now, consider instead the set $K\left\llbracket x_{1},x_{1}^{-1};\ldots;x_{n},x_{n}^{-1}\right\rrbracket $
of all Laurent monomials of the form $\lambda x_{1}^{k_{1}}\ldots x_{n}^{k_{n}}$,
where $\lambda\in K$, $K$ is a field, $x_{1},\ldots,x_{n}$ are
uninterpreted symbols and $k_{1},\ldots,k_{n}$ are integers, together
with essentially the same operations as in Example \ref{ex1}, namely
\begin{equation}
\begin{cases}
\left(\lambda x_{1}^{j_{1}}\ldots x_{n}^{j_{n}}\right)\left(\kappa x_{1}^{k_{1}}\ldots x_{n}^{k_{n}}\right)=\left(\lambda\kappa\right)x_{1}^{\left(j_{1}+k_{1}\right)}\ldots x_{n}^{\left(j_{n}+k_{n}\right)},\\
\alpha\cdot\lambda x_{1}^{k_{1}}\ldots x_{n}^{k_{n}}=\left(\alpha\lambda\right)x_{1}^{k_{1}}\ldots x_{n}^{k_{n}},\qquad1_{K\left\llbracket x_{1},x_{1}^{-1};\ldots;x_{n},x_{n}^{-1}\right\rrbracket }=1x_{1}^{0}\ldots x_{n}^{0}.
\end{cases}\label{rep}
\end{equation}
Any such $K\left\llbracket x_{1},x_{1}^{-1};\ldots;x_{n},x_{n}^{-1}\right\rrbracket $
is likewise a scalable monoid, which can again be characterized abstractly.
\begin{defn}
\label{def:qs-bas}Let $Q$ be a commutative scalable monoid over
a field $K$\emph{.} A \emph{finite quantity-space basis} for\emph{
$Q$} is a finite set $\left\{ e_{1},\ldots,e_{n}\right\} $ of invertible
elements of $Q$ such that every $x\in Q$ has a unique expansion
\[
x=\mu\cdot\prod_{i=1}^{n}\nolimits e_{i}^{_{_{k_{i}}}},
\]
where $\mu\in K$ and $k_{i}$ are integers.%
{} 
\end{defn}

{} It is easy to show that any commutative scalable monoid $Q$ over
a field, such that there exists a finite quantity-space basis for
$Q$, %
can be represented by some $K\left\llbracket x_{1},x_{1}^{-1};\ldots;x_{n},x_{n}^{-1}\right\rrbracket $
\cite{JON3}. %
On the other hand, we have the following abstract characterization
of this kind of scalable monoid, corresponding to a finitely generated
free abelian group and well suited for applications in theoretical
metrology, dimensional analysis etc. 
\begin{defn}
\label{d2.4-1}A \emph{finitely generated quantity space} is a commutative
scalable monoid $Q$ over a field, such that there exists a\emph{
}finite\emph{ }quantity-space basis\emph{ }for\emph{ $Q$.}
\end{defn}

Although finitely generated quantity spaces can be readily generalized
to quantity spaces with infinite bases, only the finite case will
be considered here. Below, ''basis'' and ''quantity space'' will
be understood to mean ''finite quantity-space basis'' and ''finitely
generated quantity space'', respectively. 

Elements of a quantity space are called \emph{quantities,} unit elements
are called \emph{unit quantities}, and orbitoids in a quantity space
are called \emph{dimensions}.

Note that $K\left\llbracket x_{1};\ldots;x_{n}\right\rrbracket $,
where $K$ is a field, is not a quantity space, so a commutative scalable
monoid over a field is not necessarily a quantity space: the relationship
between a scalable monoid and a quantity space is not as close as
that between a module and a vector space.

\subsection{\label{s23}Some basic properties of quantity spaces}
\begin{prop}
\label{p19a}Let $Q$ be a quantity space over $K$ with a basis $\left\{ e_{1},\ldots,e_{n}\right\} $
and $x,y\in Q$. Then
\begin{enumerate}
\item $1_{Q}=1\cdot\prod_{i=1}^{n}e_{i}^{0}$\emph{;}
\item if $x=\mu\cdot\prod_{i=1}^{n}e_{i}^{_{_{k_{i}}}}$ and $y=\nu\cdot\prod_{i=1}^{n}e_{i}^{\ell_{i}}$
then $xy=\mu\nu\cdot\prod_{i=1}^{n}e_{i}^{_{\left(k_{i}+\ell_{i}\right)}}$\emph{;}
\item if $x=\mu\cdot\prod_{i=1}^{n}e_{i}^{_{_{k_{i}}}}$ and $\mu\neq0$
then $x^{-1}$ exists and $x^{-1}=\mu^{-1}\cdot\prod_{i=1}^{n}e_{i}^{_{_{-k_{i}}}}$.
\end{enumerate}
\end{prop}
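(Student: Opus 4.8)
The plan is to dispatch the three parts in sequence, since part~(1) and part~(2) feed into part~(3), and each reduces to bookkeeping with the scaling axioms of Definition~\ref{thm:def1}, Lemma~\ref{thm:lem1}, and the commutativity of $Q$. For part~(1) I would simply unwind conventions: each $e_i^0 = 1_Q$ by the convention $x^0 = 1_{\!X}$, so $\prod_{i=1}^n e_i^0$ is a product of copies of $1_Q$ and therefore equals $1_Q$, whence $1 \cdot \prod_{i=1}^n e_i^0 = 1 \cdot 1_Q = 1_Q$ by the axiom $1 \cdot x = x$.

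For part~(2) the first move is to separate the scalars from the monoid product. Writing $x = \mu \cdot \prod_i e_i^{k_i}$ and $y = \nu \cdot \prod_i e_i^{\ell_i}$, Lemma~\ref{thm:lem1} gives $(\alpha \cdot a)(\beta \cdot b) = \alpha\beta \cdot ab$, so $xy = \mu\nu \cdot \bigl[(\prod_i e_i^{k_i})(\prod_i e_i^{\ell_i})\bigr]$. It then remains to show that the monoid product of the two power-products equals $\prod_i e_i^{k_i + \ell_i}$. This is where commutativity and invertibility enter: since the $e_i$ commute, the factors may be freely rearranged so that equal bases stand adjacent, and since each $e_i$ is invertible (as required in Definition~\ref{def:qs-bas}) the exponent law $e_i^{k_i} e_i^{\ell_i} = e_i^{k_i + \ell_i}$ holds even when $k_i$ or $\ell_i$ is negative. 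Combining factor by factor yields the claimed formula, which by uniqueness of the expansion is the basis expansion of $xy$.

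For part~(3), suppose $\mu \neq 0$, so $\mu^{-1}$ exists in the field $K$, and set $y = \mu^{-1} \cdot \prod_i e_i^{-k_i}$, a well-defined element of $Q$ because the $e_i$ are invertible. Applying part~(2) to $x$ and $y$ gives $xy = \mu\mu^{-1} \cdot \prod_i e_i^{k_i + (-k_i)} = 1 \cdot \prod_i e_i^0 = 1_Q$ using part~(1); since $Q$ is commutative, $yx = xy = 1_Q$ as well, so $x$ is invertible with $x^{-1} = y$ by uniqueness of inverses (noted after Definition~\ref{thm:def1}). I expect no real obstacle here; the single step that is more than a one-line appeal to an axiom is the exponent bookkeeping $\prod_i e_i^{k_i} \cdot \prod_i e_i^{\ell_i} = \prod_i e_i^{k_i + \ell_i}$ in part~(2), which is routine in a commutative monoid once invertibility of the $e_i$ makes negative powers meaningful.
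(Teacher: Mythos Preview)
Your proof is correct and follows essentially the same approach as the paper's: part~(1) via the convention $e_i^0=1_Q$, part~(2) via Lemma~\ref{thm:lem1} together with commutativity of $Q$, and part~(3) by exhibiting the candidate inverse and invoking parts~(1) and~(2). Your version is more explicit about the exponent bookkeeping and the role of invertibility of the $e_i$, but the structure and ingredients are the same.
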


\begin{proof}
(1) Note that $e_{i}^{0}=1_{Q}$ for all $e_{i}$. (2) This follows
from Lemma \ref{thm:lem1} and the fact that $Q$ is commutative.
(3) Note that $\mu^{-1}\cdot\prod_{i=1}^{n}e_{i}^{_{_{-k_{i}}}}\in Q$
since $\mu\in K$ and $e_{1},\ldots,e_{n}\in Q$, so this follows
from (1) and (2).
\end{proof}
\begin{prop}
\label{p20-1}Let $Q$ be a quantity space over $K$ with a basis
$\left\{ e_{1},\ldots e_{n}\right\} $ and $x=\mu\cdot\prod_{i=1}^{n}e_{i}^{k_{i}}$.
Then the following conditions are equivalent:
\begin{enumerate}
\item $x$ is a non-zero quantity\emph{;}
\item $\mu\neq0$\emph{;}
\item x is invertible.
\end{enumerate}
\end{prop}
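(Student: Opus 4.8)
The plan is to prove the two implications $(2)\Rightarrow(3)$ and $(3)\Rightarrow(2)$, and separately the equivalence $(1)\Leftrightarrow(2)$, throughout leaning on the basis-expansion formulas of Proposition \ref{p19a} and, above all, on the \emph{uniqueness} of the expansion asserted in Definition \ref{def:qs-bas}. The implication $(2)\Rightarrow(3)$ requires no work: if $\mu\neq0$ then Proposition \ref{p19a}(3) exhibits the inverse $x^{-1}=\mu^{-1}\cdot\prod_{i=1}^{n}e_{i}^{-k_{i}}$ explicitly, so $x$ is invertible.

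For $(3)\Rightarrow(2)$ I would argue as follows. Suppose $x$ is invertible and expand its inverse as $x^{-1}=\nu\cdot\prod_{i=1}^{n}e_{i}^{\ell_{i}}$ for some $\nu\in K$ and integers $\ell_{i}$, which is legitimate since $x^{-1}\in Q$ and hence admits a basis expansion. By Proposition \ref{p19a}(2) we then have $xx^{-1}=\mu\nu\cdot\prod_{i=1}^{n}e_{i}^{k_{i}+\ell_{i}}$, whereas $1_{Q}=1\cdot\prod_{i=1}^{n}e_{i}^{0}$ by Proposition \ref{p19a}(1). Since $xx^{-1}=1_{Q}$, uniqueness of the expansion forces the coefficients to agree, so in particular $\mu\nu=1$. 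Because $K$ is a field, $\mu\nu=1$ makes $\mu$ a unit, and in particular $\mu\neq0$.

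Finally, for $(1)\Leftrightarrow(2)$, the key observation is that the zero element of the orbitoid $[x]$ is $0_{[x]}=0\cdot x=0\mu\cdot\prod_{i=1}^{n}e_{i}^{k_{i}}=0\cdot\prod_{i=1}^{n}e_{i}^{k_{i}}$, an expansion sharing the exponent pattern of $x=\mu\cdot\prod_{i=1}^{n}e_{i}^{k_{i}}$ but with leading scalar $0$. Uniqueness of the expansion then gives $x=0_{[x]}$ if and only if $\mu=0$; since a quantity is non-zero exactly when it differs from the zero element of its own orbitoid, this is precisely $(1)\Leftrightarrow(2)$. The one point demanding care is the repeated appeal to uniqueness, which is what lets me read off the vanishing or non-vanishing of $\mu$ from an equation between quantities; the field hypothesis, by contrast, enters only at the very end, to pass from $\mu\nu=1$ to $\mu\neq0$, and this is exactly where a general ring would fail to guarantee the equivalence.
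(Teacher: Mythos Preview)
Your proof is correct and follows essentially the same route as the paper's: both use Proposition~\ref{p19a} for $(2)\Rightarrow(3)$ and uniqueness of the basis expansion for the remaining implications (the paper argues $(3)\Rightarrow(2)$ contrapositively, but with identical content). One small correction to your closing remark: the field hypothesis already enters in $(2)\Rightarrow(3)$, where you need $\mu^{-1}\in K$ (this is hidden inside Proposition~\ref{p19a}(3)); by contrast, passing from $\mu\nu=1$ to $\mu\neq0$ requires only that $K$ be a nontrivial ring.
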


\begin{proof}
$\left(1\right)\Longleftrightarrow\left(2\right)$. Note that $0\cdot x=0\cdot\left(\mu\cdot\prod_{i=1}^{n}e_{i}^{k_{i}}\right)=0\cdot\prod_{i=1}^{n}e_{i}^{k_{i}}$.
Thus, if $\mu=0$ then $0\cdot x=x$, so $x$ is a zero quantity.
Conversely, if $0\cdot x=x$ then $0\mu\cdot\prod_{i=1}^{n}e_{i}^{k_{i}}=0\cdot\left(\mu\cdot\prod_{i=1}^{n}e_{i}^{k_{i}}\right)=\mu\cdot\prod_{i=1}^{n}e_{i}^{k_{i}}$,
so $\mu=0$ since the expansion of $x$ is unique.

$\left(2\right)\Longleftrightarrow\left(3\right)$. If $\mu\neq0$
then $x$ has an inverse by Proposition \ref{p19a}. Conversely, if
$\mu=0$ then $\mu\nu=0\neq1$ for all $\nu\in K$, so $x$ does not
have an inverse $\nu\cdot\prod_{i=1}^{n}e_{i}^{\ell_{i}}$. 
\end{proof}
Thus, $1_{Q}$ is a non-zero quantity, and all elements of a basis
are non-zero quantities. Also, it follows from Proposition \ref{p20-1}
that $Q$ has no zero divisors.
\begin{cor}
\label{cor35}The product of non-zero quantities is a non-zero-quantity,
and the non-zero quantities in a dimension $\mathsf{C}$ form an abelian
group.
\end{cor}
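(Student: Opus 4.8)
The plan is to handle the two assertions separately, in both cases exploiting the characterisation of non-zero quantities as precisely the invertible ones (Proposition \ref{p20-1}).

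For the first assertion I would argue purely monoid-theoretically. Let $x,y\in Q$ be non-zero quantities. By Proposition \ref{p20-1} (equivalence of conditions (1) and (3)) both are invertible, and since $Q$ is a monoid the product $xy$ is then invertible as well, with inverse $y^{-1}x^{-1}$ (equivalently $x^{-1}y^{-1}$, as $Q$ is commutative). Applying Proposition \ref{p20-1} once more, now in the direction (3)$\Rightarrow$(1), shows that $xy$ is non-zero. Alternatively one can expand $x=\mu\cdot\prod_{i}e_{i}^{k_{i}}$ and $y=\nu\cdot\prod_{i}e_{i}^{\ell_{i}}$, use Proposition \ref{p19a}(2) to get $xy=\mu\nu\cdot\prod_{i}e_{i}^{k_{i}+\ell_{i}}$, and note $\mu\nu\neq0$ because $K$ is a field; this is just the statement that $Q$ has no zero divisors.

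For the second assertion the key preliminary step is to show that every dimension $\mathsf{C}$ of $Q$ is a \emph{non-trivial} orbitoid \emph{possessing a unit element}, so that Proposition \ref{p5} becomes applicable. Writing a representative of $\mathsf{C}$ as $\mu\cdot\prod_{i}e_{i}^{k_{i}}$, I would take $u=\prod_{i}e_{i}^{k_{i}}=1\cdot\prod_{i}e_{i}^{k_{i}}$ and verify, using uniqueness of the basis expansion, that $u$ is a unit element for $\mathsf{C}$ in the sense of Definition \ref{d26}: every element of $\mathsf{C}$ has the form $\nu\cdot\prod_{i}e_{i}^{k_{i}}=\nu\cdot u$, and $\lambda\cdot u=\lambda'\cdot u$ forces $\lambda=\lambda'$, since both sides are expansions sharing the exponent vector $(k_{i})$. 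The same uniqueness gives $u\neq0\cdot u=0_{\mathsf{C}}$, so $\mathsf{C}$ is non-trivial. Proposition \ref{p5} then shows $\mathsf{C}$ is a free module of rank $1$ over the field $K$, that is, a one-dimensional $K$-vector space, and the coordinate map $\nu\cdot u\mapsto\nu$ is a $K$-linear isomorphism from $\mathsf{C}$ onto $K$.

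Finally I would transport the multiplicative structure of $K$ along this isomorphism. Under $\nu\cdot u\mapsto\nu$ the element $0_{\mathsf{C}}$ corresponds to $0$, and by Proposition \ref{p20-1} the non-zero quantities of $\mathsf{C}$ correspond exactly to $K^{\times}=K\setminus\left\{ 0\right\} $; since $K$ is a field, $K^{\times}$ is an abelian group under multiplication, and pulling this back endows the non-zero quantities of $\mathsf{C}$ with an abelian group structure having identity $u$ and inverses $\nu^{-1}\cdot u$. I expect the main subtlety to be conceptual rather than computational: the operative group law is this transported multiplication $\left(\rho\cdot u\right)\left(\sigma\cdot u\right)\mapsto\left(\rho\sigma\right)\cdot u$, \emph{not} the ambient monoid product of $Q$, which sends $\mathsf{C}\times\mathsf{C}$ outside $\mathsf{C}$ whenever the exponent vector of $\mathsf{C}$ is non-zero. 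One should also note that this law depends on the chosen unit element $u$, so that the group is canonically determined only up to the isomorphism with $K^{\times}$.
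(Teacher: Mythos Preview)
Your treatment of the first assertion is exactly what the paper has in mind: since non-zero is equivalent to invertible (Proposition~\ref{p20-1}), the product of two non-zero quantities is invertible and hence non-zero. Nothing to add there.

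For the second assertion you have worked much harder than the paper intends, and in doing so you have actually proved a different statement. The paper offers no proof of Corollary~\ref{cor35}; it is meant to follow at once from Proposition~\ref{p20-1}, and its use in Appendix~B (rule~1: ``the non-zero quantities form an abelian group under multiplication'') shows that the intended content is simply that the set of \emph{all} non-zero quantities in $Q$ is an abelian group under the ambient monoid product. This is immediate: by Proposition~\ref{p20-1} the non-zero quantities are exactly the invertible ones, and the invertible elements of a commutative monoid always form an abelian group. The phrase ``in a dimension $\mathsf{C}$'' in the statement is at best infelicitous.

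Reading that phrase literally, you correctly observed that the ambient product does not preserve a single dimension $\mathsf{C}$, and you manufactured a group law on $\mathsf{C}\setminus\{0_{\mathsf{C}}\}$ by transporting multiplication from $K^{\times}$ along a coordinate isomorphism. The mathematics is sound, and your caveat that the resulting law depends on the chosen unit $u$ is well taken---indeed that dependence is a strong hint that this is not the structure the author has in mind. Note also that your route requires you to establish, in advance, that each dimension has a unit element (the content of Propositions~\ref{p22} and~\ref{preP39}, which appear \emph{after} the corollary), whereas the intended reading needs nothing beyond Proposition~\ref{p20-1}.
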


\begin{lem}
\label{s3.3}Let $Q$ be a quantity space over $K$ with a basis $\left\{ e_{1},\ldots e_{n}\right\} $,
and consider $x=\mu\cdot\prod_{i=1}^{n}e_{i}^{_{_{k_{i}}}}$ and $y=\nu\cdot\prod_{i=1}^{n}e_{i}^{_{_{\ell_{i}}}}$.
The following conditions are equivalent: 
\begin{enumerate}
\item $x\sim y$, or equivalently $\mu\cdot\prod_{i=1}^{n}e_{i}^{_{_{k_{i}}}}\sim\nu\cdot\prod_{i=1}^{n}e_{i}^{_{_{\ell_{i}}}}$\emph{;} 
\item $k_{i}=\ell_{i}$ for $i=1,\ldots,n$\emph{;} 
\item $\prod_{i=1}^{n}e_{i}^{_{_{k_{i}}}}=\prod_{i=1}^{n}e_{i}^{_{_{\ell_{i}}}}$\emph{;}
\item $\nu\cdot x=\mu\cdot y$, or equivalently $\nu\cdot\left(\mu\cdot\prod_{i=1}^{n}e_{i}^{_{_{k_{i}}}}\right)=\mu\cdot\left(\nu\cdot\prod_{i=1}^{n}e_{i}^{_{_{\ell_{i}}}}\right)$. 
\end{enumerate}
\end{lem}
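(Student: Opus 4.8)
The plan is to prove the equivalence by the cyclic chain of implications $(1)\Rightarrow(2)\Rightarrow(3)\Rightarrow(4)\Rightarrow(1)$, with the only substantive step being $(1)\Rightarrow(2)$; the remaining three are essentially formal. The engine throughout is the uniqueness of the quantity-space basis expansion in Definition \ref{def:qs-bas}, which I read as the assertion that each element of $Q$ determines a \emph{unique} pair consisting of a coefficient in $K$ together with an exponent tuple in $\mathbb{Z}^{n}$.

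For the three routine implications I would argue as follows. The step $(2)\Rightarrow(3)$ is immediate: equal exponent tuples $k_{i}=\ell_{i}$ yield the same monomial $\prod_{i=1}^{n}e_{i}^{k_{i}}=\prod_{i=1}^{n}e_{i}^{\ell_{i}}$. For $(3)\Rightarrow(4)$, write $m$ for the common monomial, so that $x=\mu\cdot m$ and $y=\nu\cdot m$; then by Lemma \ref{thm:lem1} we have $\nu\cdot x=\nu\cdot(\mu\cdot m)=\mu\cdot(\nu\cdot m)=\mu\cdot y$, which is exactly condition $(4)$ (and I note that the commutativity needed here is supplied by Lemma \ref{thm:lem1} directly, so I need not even invoke commutativity of $K$ at this point). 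Finally $(4)\Rightarrow(1)$ is a matter of definition: $\nu\cdot x=\mu\cdot y$ is precisely the condition of Definition \ref{d3.1} witnessing $x\sim y$, taking $\alpha=\nu$ and $\beta=\mu$.

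The crux is $(1)\Rightarrow(2)$, and here the key is to reduce two commensurability scalings to a comparison of normal forms. Assuming $x\sim y$, Definition \ref{d3.1} supplies $\alpha,\beta\in K$ with $\alpha\cdot x=\beta\cdot y$. Applying the scaling axioms of Definition \ref{thm:def1} I rewrite $\alpha\cdot x=\alpha\cdot(\mu\cdot\prod_{i=1}^{n}e_{i}^{k_{i}})=(\alpha\mu)\cdot\prod_{i=1}^{n}e_{i}^{k_{i}}$ and likewise $\beta\cdot y=(\beta\nu)\cdot\prod_{i=1}^{n}e_{i}^{\ell_{i}}$. Both right-hand sides are already in the canonical shape coefficient $\cdot$ (product of integer powers of the basis), so each is \emph{the} expansion of the common element $\alpha\cdot x=\beta\cdot y$; uniqueness of the expansion then forces the coefficients and exponent tuples to agree, giving $\alpha\mu=\beta\nu$ and, crucially, $k_{i}=\ell_{i}$ for every $i$. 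I expect the one point requiring care is the degenerate case in which the scalars $\alpha\mu$ and $\beta\nu$ vanish (so that both sides are zero quantities in the sense of Proposition \ref{p20-1}); one might be tempted to split into a zero/non-zero case analysis there. The clean way around this is to apply uniqueness to the \emph{pair} (coefficient, exponent tuple) rather than to the coefficient alone, since Definition \ref{def:qs-bas} guarantees uniqueness of the whole pair even when the coefficient is $0$. This makes the single comparison of normal forms cover both cases at once, and closes the cycle.
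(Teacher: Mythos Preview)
Your proof is correct and follows essentially the same cyclic scheme $(1)\Rightarrow(2)\Rightarrow(3)\Rightarrow(4)\Rightarrow(1)$ as the paper, with the same key step for $(1)\Rightarrow(2)$: rewriting $\alpha\cdot x=\beta\cdot y$ as $(\alpha\mu)\cdot\prod e_i^{k_i}=(\beta\nu)\cdot\prod e_i^{\ell_i}$ and invoking uniqueness of the basis expansion. Your explicit remark that Definition~\ref{def:qs-bas} guarantees uniqueness of the full (coefficient, exponent tuple) pair, thereby covering the zero-coefficient case without a separate argument, is a nice clarification that the paper leaves implicit.
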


\begin{proof}
Implications $(2)\Longrightarrow(3)$ and $(4)\Longrightarrow(1)$
are trivial, while $(3)\Longrightarrow(4)$ follows from Lemma \ref{thm:lem1}.
To prove $(1)\Longrightarrow(2)$, note that if $x\sim y$ then 
\[
\alpha\mu\cdot\prod_{i=1}^{n}\nolimits e_{i}^{_{_{k_{i}}}}=\alpha\cdot\left(\mu\cdot\prod_{i=1}^{n}\nolimits e_{i}^{_{_{k_{i}}}}\right)=\beta\cdot\left(\nu\cdot\prod_{i=1}^{n}\nolimits e_{i}^{_{_{\ell_{i}}}}\right)=\beta\nu\cdot\prod_{i=1}^{n}\nolimits e_{i}^{_{_{\ell_{i}}}}
\]
for some $\alpha,\beta\in K$, so $k_{i}=\ell_{i}$ for $i=1,\ldots,n$
because of the uniqueness of the expansion of $\alpha\cdot x$.%
\end{proof}
It follows immediately from Lemma \ref{s3.3} that if not $k_{i}=\ell_{i}$
for $i=1,\ldots,n$ then $x\neq y$ since $x\nsim y$; this is the
essence of the principle of dimensional homogeneity formulated by
Fourier \cite{FOUR}.

\subsection{\label{s24}Quantity spaces and unit quantities}
\begin{prop}
\label{p22}If $Q$ is a quantity space then every non-zero $u\in Q$
is a unit quantity for $\left[u\right]$.
\end{prop}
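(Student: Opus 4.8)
The plan is to verify the two defining conditions of a unit element (Definition \ref{d26}) directly from the basis expansion, using the characterization of commensurability in Lemma \ref{s3.3} together with the fact that a quantity space is scalable over a \emph{field}. The crucial leverage is that the leading coefficient of a non-zero quantity is invertible in $K$; this is precisely what a scalable monoid over a general ring lacks.

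First I would fix a basis $\left\{e_{1},\ldots,e_{n}\right\}$ and write $u=\mu\cdot\prod_{i=1}^{n}e_{i}^{k_{i}}$. Since $u$ is non-zero, Proposition \ref{p20-1} gives $\mu\neq0$, so $\mu^{-1}$ exists in $K$. To show that $u$ is a generating element for $\left[u\right]$, I take any $x\in\left[u\right]$, that is $x\sim u$, and write $x=\nu\cdot\prod_{i=1}^{n}e_{i}^{\ell_{i}}$. By the equivalence $(1)\Longleftrightarrow(2)$ of Lemma \ref{s3.3}, the relation $x\sim u$ forces $\ell_{i}=k_{i}$ for all $i$, so $x=\nu\cdot\prod_{i=1}^{n}e_{i}^{k_{i}}$. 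Setting $\lambda=\nu\mu^{-1}$ and using $\lambda\cdot u=\lambda\mu\cdot\prod_{i=1}^{n}e_{i}^{k_{i}}$ (from $\alpha\cdot\left(\beta\cdot x\right)=\alpha\beta\cdot x$), I obtain $\lambda\cdot u=\nu\cdot\prod_{i=1}^{n}e_{i}^{k_{i}}=x$, which is the required generating property.

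For the uniqueness condition, suppose $\lambda\cdot u=\lambda'\cdot u$. Expanding both sides yields $\lambda\mu\cdot\prod_{i=1}^{n}e_{i}^{k_{i}}=\lambda'\mu\cdot\prod_{i=1}^{n}e_{i}^{k_{i}}$; by the uniqueness of the basis expansion this forces $\lambda\mu=\lambda'\mu$, and cancelling the non-zero $\mu$ in the field $K$ gives $\lambda=\lambda'$. Hence $u$ is a generating element for $\left[u\right]$ satisfying the uniqueness requirement, i.e.\ a unit element for $\left[u\right]$, which by the terminology of Section \ref{s21} is a unit quantity.

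There is no genuine obstacle here: once Lemma \ref{s3.3} and Proposition \ref{p20-1} are available, both conditions collapse to the invertibility of $\mu$ in $K$ and the uniqueness of expansions. The one point worth stressing is that the field hypothesis is indispensable — over a mere commutative ring the coefficient $\mu$ need not be invertible, so a non-zero element need not generate its orbitoid, and this is exactly why $K\left\llbracket x_{1};\ldots;x_{n}\right\rrbracket$ fails to be a quantity space.
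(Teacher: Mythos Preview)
Your proof is correct and follows essentially the same route as the paper's: fix a basis, invoke Proposition~\ref{p20-1} to get $\mu\neq0$, use Lemma~\ref{s3.3} to match exponents, and cancel $\mu$ in the field $K$ for both existence and uniqueness. The only cosmetic difference is that the paper uses the implication $(1)\Rightarrow(4)$ of Lemma~\ref{s3.3} to write $\mu\cdot x=\nu\cdot u$ and then scales by $\mu^{-1}$, whereas you use $(1)\Rightarrow(2)$ to equate exponents first; the content is identical.

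One small correction to your closing commentary: $K\left\llbracket x_{1};\ldots;x_{n}\right\rrbracket$ is already over a field, so its failure to be a quantity space is not due to the ring-versus-field distinction but to the fact that the generators $x_{i}$ are not invertible (only non-negative exponents are allowed), so no quantity-space basis exists.
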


\begin{proof}
Let $\left\{ e_{1},\ldots,e_{n}\right\} $ be a basis for $Q$ and
set $u=\mu\cdot\prod_{i=1}^{n}e_{i}^{_{_{k_{i}}}}$, $x=\nu\cdot\prod_{i=1}^{n}e_{i}^{_{_{\ell_{i}}}}$.
Then $\mu\neq0$ by Proposition \ref{p20-1}, and if $x\sim u$ then
$\mu\cdot x=\nu\cdot u$ by Lemma \ref{s3.3}, so $x=\mu^{-1}\mu\cdot x=\mu^{-1}\cdot\left(\mu\cdot x\right)=\mu^{-1}\cdot\left(\nu\cdot u\right)=\mu^{-1}\nu\cdot u$.
Also, if $\lambda\cdot u=\lambda'\cdot u$ then $\lambda\mu\cdot\prod_{i=1}^{n}e_{i}^{k_{i}}=\lambda\cdot u=\lambda'\cdot u=\lambda'\mu\cdot\prod_{i=1}^{n}e_{i}^{k_{i}}$,
so $\lambda\mu=\lambda'\mu$ since the expansion of $\lambda\cdot u$
is unique%
, so $\lambda=\lambda'$ since $\mu\neq0$. 
\end{proof}
\begin{prop}
\label{preP39}If $Q$ is a quantity space then every $\mathsf{C}\in Q/{\sim}$
contains a non-zero unit quantity.
\end{prop}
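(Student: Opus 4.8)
The plan is to exhibit an explicit non-zero representative of the given dimension and then invoke Proposition \ref{p22}. Since $Q$ is a quantity space, fix a basis $\left\{ e_{1},\ldots,e_{n}\right\}$. Given $\mathsf{C}\in Q/{\sim}$, I would first pick any element $x\in\mathsf{C}$ and write it in its unique expansion $x=\mu\cdot\prod_{i=1}^{n}e_{i}^{k_{i}}$ with $\mu\in K$ and $k_{i}\in\mathbb{Z}$. The dimension $\mathsf{C}$ is thereby associated with the exponent tuple $\left(k_{1},\ldots,k_{n}\right)$, which by Lemma \ref{s3.3} is exactly the invariant that determines the equivalence class under $\sim$.

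The natural candidate for a non-zero unit quantity is then $u=1\cdot\prod_{i=1}^{n}e_{i}^{k_{i}}$, i.e. the element of $\mathsf{C}$ whose scalar coefficient is $1$. First I would check that $u\in\mathsf{C}$: since $u$ and $x$ have the identical exponents $k_{i}$, Lemma \ref{s3.3} (condition (2) $\Longrightarrow$ (1), or equivalently via (3)) gives $u\sim x$, so $\left[u\right]=\left[x\right]=\mathsf{C}$. Next I would confirm that $u$ is non-zero: its scalar coefficient is $1\neq0$, so by Proposition \ref{p20-1} (the equivalence of $\mu\neq0$ with $u$ being a non-zero quantity) the element $u$ is indeed non-zero.

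Finally, Proposition \ref{p22} states that every non-zero element of $Q$ is a unit quantity for its own dimension, so $u$ is a unit quantity for $\left[u\right]=\mathsf{C}$. This furnishes the required non-zero unit quantity in $\mathsf{C}$.

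I do not expect any genuine obstacle here: the result is essentially a packaging of Lemma \ref{s3.3} (commensurability is governed solely by the exponent pattern), Proposition \ref{p20-1} (non-vanishing is detected by the scalar coefficient), and Proposition \ref{p22} (non-zero elements are unit quantities). The only point requiring minor care is the choice of the scalar $1$ for the representative, which simultaneously guarantees both membership in $\mathsf{C}$ and non-vanishing; any non-zero scalar would work equally well, but taking $1$ makes both verifications immediate.
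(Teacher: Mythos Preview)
Your proposal is correct and follows essentially the same approach as the paper's proof: pick any $x\in\mathsf{C}$, expand it as $\mu\cdot\prod_{i=1}^{n}e_{i}^{k_{i}}$, set $u=1\cdot\prod_{i=1}^{n}e_{i}^{k_{i}}$, and then invoke Proposition~\ref{p20-1}, Lemma~\ref{s3.3}, and Proposition~\ref{p22} to conclude that $u$ is a non-zero unit quantity in $\mathsf{C}$. Your write-up is slightly more verbose but the logical structure is identical.
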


\begin{proof}
If $x=\mu\cdot\prod_{i=1}^{n}e_{i}^{_{_{k_{i}}}}\in\mathsf{C}$ then
$u=1\cdot\sum_{i=1}^{n}e_{i}^{k_{i}}$ is non-zero by Proposition
\ref{p20-1} and $u\in\mathsf{C}$ by Lemma \ref{s3.3}, so $u$ is
a unit quantity for $\mathsf{C}$ by Proposition \ref{p22}.
\end{proof}
\begin{prop}
\label{p38-1}Let $Q$ be a quantity space over $K$. Then $\mathsf{C}\in Q/{\sim}$,
with $x+y$ and $\lambda\cdot x$ appropriately defined, is a one-dimensional
vector space over $K$. 
\end{prop}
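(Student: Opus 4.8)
The plan is to reduce the statement to Proposition \ref{p5}, which already does the heavy lifting: it shows that a non-trivial orbitoid with a unit element in any scalable monoid over a ring $R$ is a free module of rank $1$ over $R$. Since $K$ is in particular a non-trivial commutative ring, it will suffice to verify that the given $\mathsf{C}\in Q/{\sim}$ is non-trivial and possesses a unit element; a free module of rank $1$ over a field is by definition a one-dimensional vector space, so once Proposition \ref{p5} applies we are done.

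First I would invoke Proposition \ref{preP39} to produce a non-zero unit quantity $u\in\mathsf{C}$. The existence of such a $u$ supplies both hypotheses needed for Proposition \ref{p5} at once: on the one hand $u$ is by construction a unit element for $\mathsf{C}$, and on the other hand, since $u\neq 0_{\mathsf{C}}$, the orbitoid $\mathsf{C}$ contains a non-zero element and hence is not the trivial orbitoid $\left\{ 0_{\mathsf{C}}\right\}$.

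With these two facts in hand, I would apply Proposition \ref{p5} directly, taking the ring to be the field $K$. It yields that $\mathsf{C}$, equipped with the addition of Definition \ref{d3.3} and the scalar multiplication inherited from $Q$, is a free module of rank $1$ over $K$. Finally, because $K$ is a field, a free module of rank $1$ over $K$ is precisely a one-dimensional vector space over $K$, which is the assertion.

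There is essentially no serious obstacle here: the argument is just the specialization of Proposition \ref{p5} to the field case, with Proposition \ref{preP39} furnishing the unit element. The only point requiring a moment's care is confirming that $\mathsf{C}$ is non-trivial, which follows immediately from the unit quantity being non-zero; indeed every orbitoid in $Q$ is of the form $\left\{ \mu\cdot\prod_{i=1}^{n}e_{i}^{k_{i}}\mid\mu\in K\right\}$ and therefore contains non-zero elements because $1\neq 0$ in the field $K$.
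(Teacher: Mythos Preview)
Your proposal is correct and follows exactly the same approach as the paper: the paper's proof is the single sentence ``$\mathsf{C}$ is a free module of rank $1$ over the field $K$ by Propositions \ref{p5} and \ref{preP39},'' and you have simply unpacked this by making explicit that Proposition \ref{preP39} supplies both the unit element and the non-triviality of $\mathsf{C}$ needed for Proposition \ref{p5}.
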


\begin{proof}
$\mathsf{C}$ is a free module of rank 1 over the field $K$ by Propositions
\ref{p5} and \ref{preP39}.
\end{proof}
\begin{prop}
\label{p26-1}Let $Q$ be a quantity space over $K$ with a basis
$E=\left\{ e_{1},\ldots,e_{n}\right\} $. The subset 
\[
U=\left\{ 1\cdot\prod_{i=1}^{n}\nolimits e_{i}^{k_{i}}\mid k_{i}\in\mathbb{Z}\right\} 
\]
 of $Q$ is a coherent system of unit quantities for $Q$.
\end{prop}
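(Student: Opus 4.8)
The plan is to verify the definition of a coherent system of unit quantities piece by piece, drawing almost entirely on results already established for quantity spaces. By Definition \ref{d12}, I must show that $U$ is a submonoid of $Q$ which is at the same time a complete system of unit elements; and by Definition \ref{d8} the latter means that $U$ is a \emph{dense} and \emph{sparse} set of unit elements of $Q$. So there are really four things to check: that $U$ is closed under multiplication and contains $1_Q$, that every element of $U$ is a unit quantity, that $U$ is dense, and that $U$ is sparse.

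First I would check that $U$ is a submonoid. By Proposition \ref{p19a}(1), $1_Q = 1 \cdot \prod_{i=1}^n e_i^0$ lies in $U$, and for $u = 1 \cdot \prod_{i=1}^n e_i^{k_i}$ and $v = 1 \cdot \prod_{i=1}^n e_i^{\ell_i}$ in $U$, Proposition \ref{p19a}(2) gives $uv = 1 \cdot \prod_{i=1}^n e_i^{k_i + \ell_i} \in U$, so $U$ is closed under multiplication. Next, each $u = 1 \cdot \prod_{i=1}^n e_i^{k_i} \in U$ has scalar coefficient $\mu = 1 \neq 0$, hence is a non-zero quantity by Proposition \ref{p20-1} and therefore a unit quantity for $[u]$ by Proposition \ref{p22}; thus $U$ consists entirely of unit elements.

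It remains to address density and sparsity, both of which reduce to the equivalence of commensurability and equality of exponents recorded in Lemma \ref{s3.3}. For density, given any $x = \mu \cdot \prod_{i=1}^n e_i^{k_i} \in Q$, the element $u = 1 \cdot \prod_{i=1}^n e_i^{k_i}$ belongs to $U$ and has the same exponents as $x$, so $u \sim x$ by Lemma \ref{s3.3} (equal exponents imply commensurability). For sparsity, if $u = 1 \cdot \prod_{i=1}^n e_i^{k_i}$ and $v = 1 \cdot \prod_{i=1}^n e_i^{\ell_i}$ in $U$ satisfy $u \sim v$, then Lemma \ref{s3.3} (commensurability implies equal exponents) forces $k_i = \ell_i$ for all $i$, whence $u = v$. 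Combining all four points, $U$ is a submonoid of $Q$ that is a dense and sparse set of unit quantities, i.e.\ a coherent system of unit quantities for $Q$.

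No step presents a genuine obstacle: the proposition is essentially an assembly of facts already proved, and the only thing to be careful about is invoking Lemma \ref{s3.3} in the correct direction for each clause, namely the implication ``equal exponents $\Rightarrow$ commensurable'' for density and the implication ``commensurable $\Rightarrow$ equal exponents'' for sparsity, both of which rest on the uniqueness of the basis expansion.
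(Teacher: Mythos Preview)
Your proof is correct and follows essentially the same approach as the paper's own proof: both verify that $U$ is a submonoid via Proposition~\ref{p19a}, that its elements are unit quantities via Propositions~\ref{p20-1} and~\ref{p22}, and that $U$ is sparse via Lemma~\ref{s3.3}. The only cosmetic difference is that for density the paper exhibits the commensurability directly from the definition by writing $1\cdot x = \mu\cdot\bigl(1\cdot\prod_{i=1}^n e_i^{k_i}\bigr)$, whereas you invoke the ``equal exponents $\Rightarrow$ commensurable'' direction of Lemma~\ref{s3.3}; these are equivalent one-line observations.
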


\begin{proof}
By Proposition \ref{p20-1}, all elements of $U$ are non-zero and
hence unit quantities by Proposition \ref{p22}. Also, $U$ is dense
in $Q$ since it follows from $x=\mu\cdot\prod_{i=1}^{n}e_{i}^{_{_{k_{i}}}}$
that $1\cdot x=\mu\cdot\left(1\cdot\prod_{i=1}^{n}e_{i}^{k_{i}}\right)$.
Finally, if $u=1\cdot\prod_{i=1}^{n}e_{i}^{k_{i}}\sim1\cdot\prod_{i=1}^{n}e_{i}^{_{_{\ell_{i}}}}=v$
then $\prod_{i=1}^{n}e_{i}^{k_{i}}=\prod_{i=1}^{n}e_{i}^{_{_{\ell_{i}}}}$
by Lemma \ref{s3.3}, so $u=v$, meaning that $U$ is sparse in $Q$.

It remains to prove that $U$ is a monoid. Clearly, $1_{Q}\in U$
since $1_{Q}=1\cdot\prod_{i=1}^{n}e_{i}^{0}$, and we have
\[
\left(1\cdot\prod_{i=1}^{n}\nolimits e_{i}^{_{_{k_{i}}}}\right)\left(1\cdot\prod_{i=1}^{n}\nolimits e_{i}^{\ell_{i}}\right)=1\cdot\prod_{i=1}^{n}\nolimits e_{i}^{\left(k_{i}+\ell_{i}\right)},
\]
so if $u,v\in U$ then $uv\in U$. Thus, $U$ is a submonoid of $Q$.
\end{proof}
In other words, every basis can be extended to a coherent system of
unit quantities, consisting of quantities that are expressed as products
of basis quantities and their inverses. As a direct consequence, we
have the following result.
\begin{prop}
\label{p310}If $Q$ is a quantity space over $K$ then $Q$ is distributive. 
\end{prop}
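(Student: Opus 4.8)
The plan is to reduce the statement to Proposition \ref{p11}, which already establishes that a scalable monoid carrying a dense closed set of unit elements is distributive. So the entire task is to exhibit such a set inside $Q$, and Proposition \ref{p26-1} hands one to us directly: fixing a basis $E=\{e_1,\ldots,e_n\}$ for $Q$, the set $U=\{1\cdot\prod_{i=1}^{n}e_i^{k_i}\mid k_i\in\mathbb{Z}\}$ is a coherent system of unit quantities for $Q$.

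The remaining step is purely definitional, so I would simply unwind Definition \ref{d12}: a coherent system of unit elements is a submonoid of $Q$ that is simultaneously a complete system of unit elements. Being a submonoid, $U$ is closed under the monoid multiplication, hence a \emph{closed} set of elements in the sense of Definition \ref{d8}; being a complete system of unit elements, it is by the same definition a \emph{dense} (and sparse) set of unit elements, in particular dense. Thus $U$ is a dense closed set of unit elements of $Q$, and Proposition \ref{p11} applies verbatim to give that $Q$ is distributive.

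There is essentially no obstacle beyond this bookkeeping with the definitions, since all the real content has already been absorbed into Propositions \ref{p11} and \ref{p26-1}. The only point worth flagging is that the sparseness half of the word ``coherent'' plays no role here: only density and closure of $U$ feed into Proposition \ref{p11}, so the hypothesis is in fact used with room to spare.
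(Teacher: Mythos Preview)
Your argument is correct and matches the paper's own proof, which simply cites Propositions \ref{p26-1} and \ref{p11} together. Your additional unpacking of Definition \ref{d12} to extract density and closure from coherence, and your observation that sparseness is unused, are accurate elaborations of that one-line citation.
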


\begin{proof}
The assertion follows from Propositions \ref{p26-1} and \ref{p11}.
\end{proof}

\subsection{\label{s25-1}Measures of quantities}
\begin{defn}
\label{d2.5}Let $Q$ be a quantity space over $K$ with a basis $E=\left\{ e_{1},\ldots,e_{n}\right\} $.
The uniquely determined scalar $\mu\in K$ in the expansion 
\[
x=\mu\cdot\prod_{i=1}^{n}\nolimits e_{i}^{k_{i}}
\]
is called the \emph{measure} of $x$ relative to \textbf{$E$} and
will be denoted by $\mu_{E}\left(x\right)$. 
\end{defn}

For example, $1_{Q}=1\cdot\prod_{i=1}^{n}e_{i}^{0}$ for any $E$,
so we have the following simple but useful fact.
\begin{prop}
\label{p25}If $Q$ is a quantity space over $K$ then $\mu_{E}\left(1_{Q}\right)=1$
for any basis $E$ for $Q$.
\end{prop}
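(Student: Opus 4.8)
The plan is to read the measure off directly from the canonical expansion of the identity. By Proposition \ref{p19a}(1), for \emph{any} basis $E=\left\{e_{1},\ldots,e_{n}\right\}$ of $Q$ we already have the expansion
\[
1_{Q}=1\cdot\prod_{i=1}^{n}\nolimits e_{i}^{0},
\]
in which the scalar coefficient is $1\in K$ and every exponent $k_{i}$ equals $0$. So the only work is to argue that this is indeed \emph{the} expansion that determines the measure, rather than merely \emph{an} expansion.

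For that I would invoke the uniqueness clause built into the notion of a quantity space: since $Q$ is a quantity space (Definition \ref{d2.4-1}), Definition \ref{def:qs-bas} guarantees that every $x\in Q$ has a \emph{unique} expansion $x=\mu\cdot\prod_{i=1}^{n}e_{i}^{k_{i}}$ relative to $E$, so in particular the scalar $\mu$ is uniquely determined by $x$ and $E$. By Definition \ref{d2.5}, $\mu_{E}\left(1_{Q}\right)$ is precisely this scalar in the case $x=1_{Q}$, and the displayed expansion identifies it as $1$. Hence $\mu_{E}\left(1_{Q}\right)=1$, and since the expansion above was available for an arbitrary choice of $E$, the conclusion holds for every basis. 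There is no genuine obstacle here: the result is an immediate specialisation of the uniqueness of basis expansions to the trivial expansion of the monoid identity, which is exactly why the surrounding text flags it as a ``simple but useful fact''.
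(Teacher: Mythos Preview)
Your argument is correct and matches the paper's own justification: the paper simply notes that $1_{Q}=1\cdot\prod_{i=1}^{n}e_{i}^{0}$ for any basis $E$ (Proposition~\ref{p19a}(1)) and reads off the measure from Definition~\ref{d2.5}, exactly as you do. Your added remark about uniqueness just makes explicit what the paper leaves implicit.
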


Relative to a fixed basis, measures of quantities can be used as proxies
for the quantities themselves. 
\begin{prop}
\label{p38}Let Q be a quantity space over $K$ with a basis $E=\left\{ e_{1},\ldots,e_{n}\right\} $.
Then
\begin{enumerate}
\item $\mu_{E}\left(xy\right)=\mu_{E}\left(x\right)\mu_{E}\left(y\right)$
for all $x,y\in Q$\emph{;}
\item $x^{-1}$ exists and $\mu_{E}\left(x^{-1}\right)=\mu_{E}\left(x\right)^{-1}$
for all $x\in Q$ such that $\mu_{E}\left(x\right)\neq0$\emph{;}
\item $\mu_{E}\left(\lambda\cdot x\right)=\lambda\,\mu_{E}\left(x\right)$
for all $\lambda\in K$ and $x\in Q$\emph{;}
\item $\mu_{E}\left(x+y\right)=\mu_{E}\left(x\right)+\mu_{E}\left(y\right)$
for all $x,y\in X$ such that $x\sim y$.
\end{enumerate}
\end{prop}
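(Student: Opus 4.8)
The plan is to prove each of the four claims by writing quantities in their unique basis expansions and reading off the measure, using Proposition~\ref{p38} heuristics already embedded in Propositions~\ref{p19a} and \ref{p20-1}. Throughout, fix the basis $E=\left\{e_{1},\ldots,e_{n}\right\}$ and write $x=\mu\cdot\prod_{i=1}^{n}e_{i}^{k_{i}}$ and $y=\nu\cdot\prod_{i=1}^{n}e_{i}^{\ell_{i}}$, so that by definition $\mu_{E}\left(x\right)=\mu$ and $\mu_{E}\left(y\right)=\nu$. The entire proof is then a matter of computing the expansion of the relevant combination and extracting its scalar coefficient, which is legitimate precisely because the expansion is unique.

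First I would dispatch (1)--(3), which follow directly from earlier propositions. For (1), Proposition~\ref{p19a}(2) gives $xy=\mu\nu\cdot\prod_{i=1}^{n}e_{i}^{\left(k_{i}+\ell_{i}\right)}$, and since this is an expansion relative to $E$, uniqueness forces $\mu_{E}\left(xy\right)=\mu\nu=\mu_{E}\left(x\right)\mu_{E}\left(y\right)$. For (2), assume $\mu_{E}\left(x\right)=\mu\neq0$; then by Proposition~\ref{p19a}(3) the inverse exists and $x^{-1}=\mu^{-1}\cdot\prod_{i=1}^{n}e_{i}^{-k_{i}}$, whose measure is $\mu^{-1}=\mu_{E}\left(x\right)^{-1}$. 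For (3), the scaling identity gives $\lambda\cdot x=\lambda\cdot\left(\mu\cdot\prod_{i=1}^{n}e_{i}^{k_{i}}\right)=\lambda\mu\cdot\prod_{i=1}^{n}e_{i}^{k_{i}}$, so $\mu_{E}\left(\lambda\cdot x\right)=\lambda\mu=\lambda\,\mu_{E}\left(x\right)$.

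The step requiring the most care is (4), additivity on a single dimension, because $x+y$ is only defined when $x\sim y$ and the definition (Definition~\ref{d3.3}) routes through a choice of unit element. The hard part is matching the abstract sum-construction to the basis-expansion bookkeeping. Given $x\sim y$, Lemma~\ref{s3.3} yields $k_{i}=\ell_{i}$ for all $i$, so $x$ and $y$ share the common monomial $m=\prod_{i=1}^{n}e_{i}^{k_{i}}$, and $u=1\cdot m$ is a unit quantity for the dimension $\left[x\right]$ by Propositions~\ref{p20-1}, \ref{p22} and \ref{p26-1}. Then $x=\mu\cdot u$ and $y=\nu\cdot u$, so Definition~\ref{d3.3} gives $x+y=\left(\mu+\nu\right)\cdot u=\left(\mu+\nu\right)\cdot m$, an expansion relative to $E$; hence $\mu_{E}\left(x+y\right)=\mu+\nu=\mu_{E}\left(x\right)+\mu_{E}\left(y\right)$.

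I would record two subtleties. First, in (4) one must confirm that the sum computed via the chosen unit $u=1\cdot m$ agrees with Definition~\ref{d3.3} regardless of which unit element is used; Lemma~\ref{L22} guarantees this independence, so the computation is well-posed. Second, the statement of (4) writes ``$x,y\in X$'' where it presumably means $x,y\in Q$; I would read it as such. With these observations the proof is a short uniqueness argument in each part, and no genuine obstacle remains beyond correctly invoking the unit-element machinery for the additive case.
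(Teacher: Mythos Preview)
Your proposal is correct and follows essentially the same approach as the paper: invoke Proposition~\ref{p19a} for parts (1) and (2), compute directly for (3), and for (4) use Lemma~\ref{s3.3} to identify the exponents, recognize $\prod_{i=1}^{n}e_{i}^{k_{i}}$ as a unit quantity via Proposition~\ref{p22}, and apply Definition~\ref{d3.3}. Your additional remarks on Lemma~\ref{L22} and the typo ``$X$'' for ``$Q$'' are accurate but not needed for the argument; the citation of Proposition~\ref{p26-1} is harmless but superfluous, since Proposition~\ref{p22} (with Proposition~\ref{p20-1}) already suffices to see that $\prod_{i=1}^{n}e_{i}^{k_{i}}$ is a unit quantity.
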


\begin{proof}
(1) This follows immediately from Proposition \ref{p19a}(2). (2)
This follows similarly from Proposition \ref{p19a}(3). (3) If $x=\mu_{E}\left(x\right)\cdot\prod_{i=1}^{n}e_{i}^{_{_{k_{i}}}}$
then $\lambda\cdot x=\lambda\cdot\left(\mu_{E}\left(x\right)\cdot\prod_{i=1}^{n}e_{i}^{_{_{k_{i}}}}\right)=\lambda\mu_{E}\left(x\right)\cdot\prod_{i=1}^{n}e_{i}^{_{_{k_{i}}}}$.
(4) If $x=\mu_{E}\left(x\right)\cdot\prod_{i=1}^{n}e_{i}^{k_{i}}$
and $y=\mu_{E}\left(y\right)\cdot\prod_{i=1}^{n}e_{i}^{\ell{}_{i}}$
are the expansions of $x$ and $y$ %
then $\prod_{i=1}^{n}e_{i}^{k_{i}}=\prod_{i=1}^{n}e_{i}^{\ell{}_{i}}$
by Lemma \ref{s3.3}. As $\prod_{i=1}^{n}e_{i}^{k_{i}}$ is non-zero,
and thus a unit quantity for $\left[\prod_{i=1}^{n}e_{i}^{k_{i}}\right]$
by Proposition \ref{p22}, we have $x+y=\left(\mu_{E}\left(x\right)+\mu_{E}\left(y\right)\right)\cdot\prod_{i=1}^{n}\nolimits e_{i}^{k_{i}}$
by Definition \ref{d3.3}.
\end{proof}
\begin{prop}
Let $Q$ be a quantity space over $K$. If $E=\left\{ e_{1},\ldots,e_{n}\right\} $
is a basis for $Q$ and $x=\mu_{E}\left(x\right)\cdot\prod_{i=1}^{n}e_{i}^{k_{i}}$
then $E'=\left\{ \lambda_{1}\cdot e_{1},\ldots,\lambda_{n}\cdot e_{n}\right\} $,
where $\lambda_{i}\neq0$, is a basis for $Q$ and $x=\mu_{E'}\left(x\right)\cdot\prod_{i=1}^{n}\left(\lambda_{i}\cdot e_{i}\right)^{k_{i}}$,
where $\mu_{E'}\left(x\right)=\prod_{i=1}^{n}\lambda_{i}^{-k_{i}}\mu_{E}\left(x\right)$.
\end{prop}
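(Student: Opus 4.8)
The plan is to verify directly that $E'$ meets the two requirements in the definition of a quantity-space basis (Definition \ref{def:qs-bas}) --- invertibility of its elements and unique expansion of every quantity --- and then to read off $\mu_{E'}(x)$ from an explicit change-of-basis identity. First I would check invertibility: each $e_i$ is a non-zero quantity, hence invertible by Proposition \ref{p20-1}, and since $\lambda_i\neq0$ in the field $K$, Lemma \ref{thm:lem1} gives
\[
\left(\lambda_i\cdot e_i\right)\left(\lambda_i^{-1}\cdot e_i^{-1}\right)=\lambda_i\lambda_i^{-1}\cdot e_ie_i^{-1}=1_{Q},
\]
so $\lambda_i\cdot e_i$ is invertible with inverse $\lambda_i^{-1}\cdot e_i^{-1}$.

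The heart of the argument is the identity
\[
\prod_{i=1}^{n}\left(\lambda_i\cdot e_i\right)^{m_i}=\left(\prod_{i=1}^{n}\lambda_i^{m_i}\right)\cdot\prod_{i=1}^{n}e_i^{m_i},\qquad m_i\in\mathbb{Z},
\]
which I would establish by first noting $\left(\lambda_i\cdot e_i\right)^{m_i}=\lambda_i^{m_i}\cdot e_i^{m_i}$ (using the inverse just computed for negative $m_i$, and $x^{0}=1_{Q}$ for $m_i=0$) and then pulling out all scalars by repeated application of Lemma \ref{thm:lem1}; commutativity of $Q$ makes the products unambiguous. With this identity in hand, existence of an $E'$-expansion for an arbitrary $y\in Q$ follows from its unique $E$-expansion $y=\mu_E(y)\cdot\prod_{i=1}^{n}e_i^{m_i}$: since $c=\prod_{i=1}^{n}\lambda_i^{m_i}\neq0$, the identity rewrites this as $y=\bigl(\prod_{i=1}^{n}\lambda_i^{-m_i}\,\mu_E(y)\bigr)\cdot\prod_{i=1}^{n}(\lambda_i\cdot e_i)^{m_i}$.

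For uniqueness I would transfer it from $E$: any two $E'$-expansions of $y$, say with exponents $m_i$ and $m_i'$ and scalars $\nu,\nu'$, convert via the identity into two $E$-expansions of $y$; uniqueness of the $E$-expansion forces $m_i=m_i'$ for all $i$, after which the common non-zero factor $\prod_{i=1}^{n}\lambda_i^{m_i}$ cancels to give $\nu=\nu'$. This shows $E'$ is a basis. Finally, specialising the existence computation to $y=x$ (whose $E$-exponents are the given $k_i$ and whose measure is $\mu_E(x)$) yields
\[
x=\left(\prod_{i=1}^{n}\lambda_i^{-k_i}\,\mu_E(x)\right)\cdot\prod_{i=1}^{n}\left(\lambda_i\cdot e_i\right)^{k_i},
\]
so by Definition \ref{d2.5} applied to $E'$ we read off $\mu_{E'}(x)=\prod_{i=1}^{n}\lambda_i^{-k_i}\,\mu_E(x)$.

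The only mildly delicate point is justifying $\left(\lambda_i\cdot e_i\right)^{m_i}=\lambda_i^{m_i}\cdot e_i^{m_i}$ uniformly across positive, zero, and negative exponents, which requires the inverse formula above together with Proposition \ref{p19a}; this is routine bookkeeping with Lemma \ref{thm:lem1} rather than a genuine obstacle, and once it is settled the rest of the proof is a direct transfer of the uniqueness already guaranteed for $E$.
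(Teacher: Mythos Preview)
Your proposal is correct and follows essentially the same route as the paper: both arguments hinge on the change-of-basis identity $\prod_i(\lambda_i\cdot e_i)^{m_i}=\bigl(\prod_i\lambda_i^{m_i}\bigr)\cdot\prod_i e_i^{m_i}$ and then reduce uniqueness of an $E'$-expansion to the already-known uniqueness of the $E$-expansion. Your version is in fact slightly more complete than the paper's, since you explicitly verify that each $\lambda_i\cdot e_i$ is invertible (as required by Definition~\ref{def:qs-bas}), whereas the paper leaves this implicit; the paper's uniqueness step also parametrizes an arbitrary $E'$-expansion in the somewhat oblique form $\mu\prod_i\lambda_i^{-\ell_i}\cdot\prod_i(\lambda_i\cdot e_i)^{\ell_i}$ before converting back, which amounts to the same reverse application of the identity that you carry out directly.
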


\begin{proof}
We have
\begin{align*}
x & =\mu_{E}\left(x\right)\cdot\prod_{i=1}^{n}\nolimits e_{i}^{k_{i}}=\mu_{E}\left(x\right)\cdot\prod_{i=1}^{n}\nolimits\left(\lambda_{i}^{-1}\cdot\left(\lambda_{i}\cdot e_{i}\right)\right)^{k_{i}}\\
 & =\mu_{E}\left(x\right)\cdot\left(\prod_{i=1}^{n}\nolimits\lambda_{i}^{-k_{i}}\cdot\prod_{i=1}^{n}\nolimits\left(\lambda_{i}\cdot e_{i}\right)^{k_{i}}\right)=\mu_{E}\left(x\right)\prod_{i=1}^{n}\nolimits\lambda_{i}^{-k_{i}}\cdot\prod_{i=1}^{n}\nolimits\left(\lambda_{i}\cdot e_{i}\right)^{k_{i}}.
\end{align*}
Hence, $x$ has an expansion in terms of $E'$. To prove uniqueness,
assume that $x=\mu\prod_{i=1}^{n}\nolimits\lambda_{i}^{-\ell_{i}}\cdot\prod_{i=1}^{n}\nolimits\left(\lambda_{i}\cdot e_{i}\right)^{\ell_{i}}$.
Changing this expansion in terms of $E'$ to an expansion in terms
of $E''=\left\{ \lambda_{1}^{-1}\cdot\left(\lambda_{1}\cdot e_{1}\right),\ldots,\lambda_{n}^{-1}\cdot\left(\lambda_{n}\cdot e_{n}\right)\right\} $
gives 
\[
x=\mu\prod_{i=1}^{n}\nolimits\lambda_{i}^{-k_{i}}\prod_{i=1}^{n}\nolimits\lambda_{i}^{k_{i}}\cdot\prod_{i=1}^{n}\nolimits\left(\lambda_{i}^{-1}\cdot\left(\lambda_{i}\cdot e_{i}\right)\right)^{\ell_{i}}=\mu\cdot\prod_{i=1}^{n}\nolimits e_{i}^{\ell_{i}},
\]
so $\mu=\mu_{E}\left(x\right)$ and $\ell_{i}=k_{i}$ for $i=1,\ldots,n$
by the uniqueness of the expansion of $x$ in terms of $E=E''$.
\end{proof}
In general, the measure of a quantity thus depends on a choice of
basis, but %
there is an important exception to this rule.
\begin{prop}
\label{s3.4}Let $Q$ be a quantity space over $K$. For every $x\in\left[1_{Q}\right]$,
$\mu_{E}\left(x\right)$ does not depend on $E$.
\end{prop}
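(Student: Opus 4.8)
The plan is to show that every $x\in\left[1_{Q}\right]$ has the form $x=\mu_{E}\left(x\right)\cdot1_{Q}$ for \emph{any} basis $E$, and then to use that $1_{Q}$ is a unit quantity to conclude that the scalar appearing in this expression is forced, hence the same for every $E$.

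First I would note that the orbitoid $\left[1_{Q}\right]$ is defined purely through the relation $\sim$ (Definition \ref{d3.1}), which refers to the scaling action alone and not to any basis; thus $\left[1_{Q}\right]$ is one and the same subset of $Q$ regardless of the choice of $E$. Now fix any basis $E=\left\{ e_{1},\ldots,e_{n}\right\} $ and write $x=\mu_{E}\left(x\right)\cdot\prod_{i=1}^{n}e_{i}^{k_{i}}$. Since $1_{Q}=1\cdot\prod_{i=1}^{n}e_{i}^{0}$ (Proposition \ref{p19a}(1), cf.\ Proposition \ref{p25}), the exponent vector of $1_{Q}$ relative to $E$ is the zero vector. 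Applying Lemma \ref{s3.3} to the pair $x,1_{Q}$, the equivalence of conditions (1) and (2) forces $k_{i}=0$ for all $i$ whenever $x\sim1_{Q}$. Hence $x=\mu_{E}\left(x\right)\cdot\prod_{i=1}^{n}e_{i}^{0}=\mu_{E}\left(x\right)\cdot1_{Q}$ for every $x\in\left[1_{Q}\right]$.

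Next I would invoke that $1_{Q}$ is a non-zero quantity (Proposition \ref{p20-1}), so by Proposition \ref{p22} it is a unit quantity for its own orbitoid $\left[1_{Q}\right]$. The uniqueness clause in the definition of a unit element (Definition \ref{d26}) then gives that $\lambda\cdot1_{Q}=\lambda'\cdot1_{Q}$ implies $\lambda=\lambda'$. Finally, taking two bases $E$ and $E'$ and combining this with the first step, I would write $\mu_{E}\left(x\right)\cdot1_{Q}=x=\mu_{E'}\left(x\right)\cdot1_{Q}$, whence $\mu_{E}\left(x\right)=\mu_{E'}\left(x\right)$, which is exactly the assertion.

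The main obstacle is the first step: recognizing that commensurability with $1_{Q}$ pins the entire exponent vector to zero in \emph{every} basis, which is precisely what Lemma \ref{s3.3} delivers. This is the ``dimensionless'' characterization of $\left[1_{Q}\right]$; once it is in hand, the remainder is the short uniqueness argument supplied by the unit-element property of $1_{Q}$, and no computation with the exponents of $E$ versus $E'$ is actually needed.
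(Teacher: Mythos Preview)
Your proof is correct and follows essentially the same approach as the paper: both arguments rest on the fact that $1_{Q}$ is a unit quantity for $\left[1_{Q}\right]$, so the scalar in $x=\lambda\cdot1_{Q}$ is forced and hence basis-independent. The only cosmetic difference is that the paper obtains $\mu_{E}(x)=\lambda$ via Propositions~\ref{p25} and~\ref{p38}(3), whereas you extract $x=\mu_{E}(x)\cdot1_{Q}$ directly from Lemma~\ref{s3.3}.
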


\begin{proof}
$1_{Q}$ is a unit quantity for $\left[1_{Q}\right]$ by Proposition
\ref{p22}, so there is a unique $\lambda\in K$ such that $x=\lambda\cdot1_{Q}$,
so $\mu_{E}\left(x\right)=\lambda\,\mu_{E}\left(1_{Q}\right)=\lambda$
for any basis $E$ for $Q$ by Propositions \ref{p25} and \ref{p38}(3). 
\end{proof}
\begin{rem}
\label{rem317}The $\pi$ theorem in dimensional analysis depends
on this result \cite{JON4}. It is common to refer to any $x\in\left[1_{Q}\right]$
as a ``dimensionless quantity'', although $x$ is not really dimensionless
\textendash{} it belongs to, or ``has'', the dimension $\left[1_{Q}\right]$.
Also, many authors (e.g., \cite{WAL,DROB,WHIT}) identify ''dimensionless
quantities'' with numbers,%
{} but Proposition \ref{s3.4} does not justify this identification.
A ''dimensionless quantity'' does not correspond to a unique number,
but to a number that depends on the choice of a quantity unit for
$\left[1_{Q}\right]$. For example, plane angles can be measured in
both radians and degrees. However, if we have a coherent system of
units $U$ then $\left[1_{Q}\right]$ contains exactly one unit $1_{Q}$
since $U$ is a submonoid of $Q$ and $u\sim1_{Q}$ implies $u=1_{Q}$.
Also, by Proposition \ref{p26-1} each choice of basis for $Q$ \textendash{}
that is, each choice of so-called base units \cite{VIM} \textendash{}
gives rise to a coherent system of units. Note that for a plane angle
$1_{Q}$ corresponds to the radian.%
\end{rem}

\subsection{\label{s26}$Q/{\sim}$ is a free abelian group}

In this section, we show that $Q/{\sim}$ regarded as a monoid has
additional properties derived from the quantity space $Q$. Below,
let $\breve{x}$ be given by $\breve{x}=1\cdot\prod_{i=1}e_{i}^{k_{i}}$,
where $x=\mu\cdot\prod_{i=1}e_{i}^{k_{i}}$ is the expansion of $x\in Q$
relative to a basis for $Q$. Note that, irrespective of the choice
of basis, $\breve{x}$ is a non-zero quantity by Proposition \ref{p20-1}
and such that $\breve{x}\sim x$ by Lemma \ref{s3.3}.
\begin{prop}
\label{p311}If $Q$ is a quantity space then $Q/{\sim}$ is an abelian
group.
\end{prop}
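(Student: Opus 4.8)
The plan is to verify that $Q/{\sim}$, which is already known to be a monoid by Proposition \ref{s3.3-1}, is moreover a commutative monoid in which every element is invertible. Two of the three ingredients are essentially immediate, so the real content lies in producing inverses, and the single point requiring care is to avoid the non-invertible zero elements when choosing representatives.

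First I would dispatch commutativity and the identity. Since $Q$ is commutative, the definition $[x][y]=[xy]$ from Definition \ref{def:214} gives $[x][y]=[xy]=[yx]=[y][x]$ for all $x,y\in Q$, so $Q/{\sim}$ is commutative; and $[1_Q]=1_{Q/{\sim}}$ is the identity by that same definition. Associativity and the identity laws are already guaranteed by the monoid structure from Proposition \ref{s3.3-1}.

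The main step is to exhibit an inverse for an arbitrary orbitoid $\mathsf{C}\in Q/{\sim}$. I would take any representative $x\in\mathsf{C}$ with expansion $x=\mu\cdot\prod_{i=1}^{n}e_{i}^{k_{i}}$ and pass to the associated quantity $\breve{x}=1\cdot\prod_{i=1}^{n}e_{i}^{k_{i}}$ introduced just above the statement. By Proposition \ref{p20-1}, $\breve{x}$ is non-zero and hence invertible in $Q$, with $\breve{x}^{-1}=1\cdot\prod_{i=1}^{n}e_{i}^{-k_{i}}$ by Proposition \ref{p19a}(3). Because $\breve{x}\sim x$ by Lemma \ref{s3.3}, we have $[\breve{x}]=[x]=\mathsf{C}$, and therefore $\mathsf{C}\,[\breve{x}^{-1}]=[\breve{x}][\breve{x}^{-1}]=[\breve{x}\,\breve{x}^{-1}]=[1_Q]=1_{Q/{\sim}}$. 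By commutativity this $[\breve{x}^{-1}]$ is a two-sided inverse of $\mathsf{C}$, completing the verification that $Q/{\sim}$ is an abelian group.

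I expect the only genuine obstacle to be the choice of representative: the orbitoid $\mathsf{C}$ also contains its zero element $0_{\mathsf{C}}$, which is not invertible in $Q$, so one cannot simply invert an arbitrary element of $\mathsf{C}$. Passing to $\breve{x}$, whose scalar coefficient is $1\neq 0$, sidesteps this difficulty entirely, and everything else follows from the already-established monoid structure together with Propositions \ref{p19a} and \ref{p20-1} and Lemma \ref{s3.3}.
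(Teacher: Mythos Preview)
Your proof is correct and follows essentially the same approach as the paper: both establish commutativity directly from $[xy]=[yx]$ and obtain inverses via $\breve{x}^{-1}$. The only cosmetic difference is that the paper computes $[x][\breve{x}^{-1}]=[x\breve{x}^{-1}]=[\mu\cdot 1_Q]=[1_Q]$ directly, whereas you first replace $[x]$ by $[\breve{x}]$ and then multiply.
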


\begin{proof}
$Q/{\sim}$ is a commutative monoid since $\left[x\right]\left[y\right]=\left[xy\right]=\left[yx\right]=\left[y\right]\left[x\right]$
for all $\left[x\right],\left[y\right]\in Q/{\sim}$.%
{} Also, $\left[x\right]\left[\breve{x}^{-1}\right]=\left[x\breve{x}^{-1}\right]=\left[\mu\cdot1_{Q}\right]=\left[\breve{x}^{-1}x\right]=\left[\breve{x}^{-1}\right]\left[x\right]$,
so $\left[\breve{x}^{-1}\right]=\left[x\right]^{-1}$ since $\left[\mu\cdot1_{Q}\right]=\left[1_{Q}\right]=1_{Q/{\sim}}$.
Thus, $Q/{\sim}$ is an abelian group.
\end{proof}
Recall that a basis for a finitely generated abelian group $G$ is
a set $\left\{ \varepsilon_{1},\ldots,\varepsilon_{n}\right\} $ of
elements of $G$ such that every $x\in G$ has a unique expansion
$x=\prod_{i=1}^{n}\varepsilon_{i}^{k_{i}}$, where $k_{i}$ are integers.
\begin{prop}
\label{s3.7-1}Let $Q$ be a quantity space with a basis $E=\left\{ e_{1},\ldots,e_{n}\right\} $.
Then $\mathsf{E}=\left\{ \left[e_{1}\right],\ldots,\left[e_{n}\right]\right\} $
is a basis for $Q/{\sim}$ with the same cardinality as $E$. 
\end{prop}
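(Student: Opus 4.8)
The plan is to verify the two defining properties of a basis for the finitely generated abelian group $Q/{\sim}$ --- which is a group by Proposition \ref{p311} --- namely that every element of $Q/{\sim}$ admits an expansion $\prod_{i=1}^{n}[e_{i}]^{k_{i}}$ with integer exponents, and that this expansion is unique. The cardinality claim, i.e.\ that the $[e_{i}]$ are pairwise distinct so that $\left|\mathsf{E}\right|=\left|E\right|=n$, will then fall out of uniqueness.

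For existence, I would start from the unique expansion $x=\mu\cdot\prod_{i=1}^{n}e_{i}^{k_{i}}$ of an arbitrary $x\in Q$ guaranteed by Definition \ref{def:qs-bas}, and apply the quotient homomorphism $\phi:x\mapsto[x]$ of Proposition \ref{s3.3-1}. Since $\mu\cdot\prod_{i=1}^{n}e_{i}^{k_{i}}\sim\prod_{i=1}^{n}e_{i}^{k_{i}}$ by Corollary \ref{c31}, the scalar factor $\mu$ is absorbed under $\phi$, leaving $[x]=\bigl[\prod_{i=1}^{n}e_{i}^{k_{i}}\bigr]$. Using that $\phi$ is a monoid homomorphism, together with the invertibility of each $e_{i}$ (so that $[e_{i}]^{-1}=[e_{i}^{-1}]$ in the group $Q/{\sim}$), I would rewrite this as $[x]=\prod_{i=1}^{n}[e_{i}]^{k_{i}}$, the desired expansion. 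The only point requiring a little care is checking $[e_{i}^{k_{i}}]=[e_{i}]^{k_{i}}$ for negative $k_{i}$, but this is immediate from $e_{i}^{k_{i}}=(e_{i}^{-1})^{-k_{i}}$ and the multiplicativity of $\phi$.

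For uniqueness, suppose $\prod_{i=1}^{n}[e_{i}]^{k_{i}}=\prod_{i=1}^{n}[e_{i}]^{\ell_{i}}$. Since these are the $\phi$-images of $\prod_{i=1}^{n}e_{i}^{k_{i}}$ and $\prod_{i=1}^{n}e_{i}^{\ell_{i}}$, this equality of classes means $\prod_{i=1}^{n}e_{i}^{k_{i}}\sim\prod_{i=1}^{n}e_{i}^{\ell_{i}}$, and Lemma \ref{s3.3} (equivalence of (1) and (2), applied with $\mu=\nu=1$) forces $k_{i}=\ell_{i}$ for every $i$. This simultaneously settles the cardinality statement: were $[e_{i}]=[e_{j}]$ for some $i\neq j$, the element $[e_{i}]$ would admit two distinct exponent vectors, contradicting uniqueness; hence the $[e_{i}]$ are distinct and $\mathsf{E}$ has exactly $n$ elements.

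I do not anticipate a genuine obstacle, as every step reduces to a result already established. The one place to stay alert is the interface between the group structure of $Q/{\sim}$ and the monoid expansions living in $Q$: specifically, ensuring that negative exponents are routed through the invertibility of the basis elements, and that the scalar $\mu$ is correctly collapsed via Corollary \ref{c31} rather than mishandled. With those two bookkeeping points observed, both existence and uniqueness follow directly from Proposition \ref{s3.3-1}, Corollary \ref{c31} and Lemma \ref{s3.3}.
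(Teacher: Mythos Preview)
Your proposal is correct and follows essentially the same route as the paper: existence via the basis expansion in $Q$ and the quotient map (collapsing the scalar via Corollary \ref{c31}), and uniqueness via Lemma \ref{s3.3}. The only organizational difference is that the paper establishes the cardinality claim first, directly from Lemma \ref{s3.3}, whereas you derive it at the end as a consequence of uniqueness; both arguments are equally valid.
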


\begin{proof}
The unique expansions of $e_{i},e_{j}\in E$ relative to $E$ are
\[
e_{i}=1\cdot\left(\cdots e_{i-1}^{0}e_{i}^{1}e_{i+1}^{0}\cdots\right),\quad e_{j}=1\cdot\left(\cdots e_{j-1}^{0}e_{j}^{1}e_{j+1}^{0}\cdots\right).
\]
Hence, if $e_{i}\neq e_{j}$ so that $i\neq j$ then $\left[e_{i}\right]\neq\left[e_{j}\right]$
by Lemma \ref{s3.3}. This means that the surjective mapping $\phi:E\rightarrow\left\{ \left[e_{1}\right],\ldots,\left[e_{n}\right]\right\} $
given by $\phi\left(e_{i}\right)=\left[e_{i}\right]$ is injective
as well and thus a bijection. It remains to show that $\mathsf{E}$
is a basis for $Q/{\sim}$.

First, let $\left[x\right]$ be an arbitrary dimension in $Q/{\sim}$.
As $E$ is a basis for $Q$, we have $x=\mu\cdot\prod_{i=1}^{n}e_{i}^{k_{i}}$
for some $\mu\in K$ and some integers $k_{1},\ldots,k_{n}$, so $\left[x\right]=\left[\mu\cdot\prod_{i=1}^{n}e_{i}^{k_{i}}\right]=\left[\prod_{i=1}^{n}e_{i}^{k_{i}}\right]=\prod_{i=1}^{n}\left[e_{i}\right]^{k_{i}}$.
Also, if $\left[x\right]=\prod_{i=1}^{n}\left[e_{i}\right]^{k_{i}}=\prod_{i=1}^{n}\left[e_{i}\right]^{\ell{}_{i}}$,
then $\left[\prod_{i=1}^{n}e_{i}^{k_{i}}\right]=\left[\prod_{i=1}^{n}e_{i}^{\ell{}_{i}}\right]$,
so $\prod_{i=1}^{n}e_{i}^{k_{i}}\sim\prod_{i=1}^{n}e_{i}^{\ell{}_{i}}$,
so $k_{i}=\ell_{i}$ for $i=1,\ldots,n$ by Lemma \ref{s3.3}.
\end{proof}
A (finitely generated) abelian group for which there exists a basis
is said to be free abelian (of finite rank). Hence, corresponding
to the fact that if $X$ is a scalable monoid then $X/{\sim}$ is
a monoid, we have the following much stronger result.
\begin{prop}
\label{314}If $Q$ is a quantity space then $Q/{\sim}$ is a free
abelian group of finite rank. 
\end{prop}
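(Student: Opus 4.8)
The plan is to assemble the statement directly from the two immediately preceding propositions together with the definition of a free abelian group of finite rank that was just recalled. Since a quantity space $Q$ comes by Definition \ref{d2.4-1} already equipped with a finite basis $E=\left\{e_{1},\ldots,e_{n}\right\}$, the substantive work has in fact been carried out in Propositions \ref{p311} and \ref{s3.7-1}; what remains is a matter of citing them in the right order and matching them against the definition.

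First I would invoke Proposition \ref{p311} to conclude that $Q/{\sim}$ is an abelian group. This is the step that draws on the full strength of the quantity-space structure, and in particular on the fact that every non-zero quantity is invertible (Proposition \ref{p20-1}); that invertibility is precisely what supplies each class $\left[x\right]$ with an inverse of the form $\left[\breve{x}^{-1}\right]$.

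Next I would fix the finite basis $E=\left\{e_{1},\ldots,e_{n}\right\}$ for $Q$ and apply Proposition \ref{s3.7-1}, which hands us a basis $\mathsf{E}=\left\{\left[e_{1}\right],\ldots,\left[e_{n}\right]\right\}$ for $Q/{\sim}$ of the same (finite) cardinality $n$. Since a finitely generated abelian group that possesses a basis is by definition free abelian of finite rank, the existence of the finite basis $\mathsf{E}$ together with the group structure from the previous step completes the argument.

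Because the whole proof reduces to the quotation of earlier results, I do not anticipate a genuine obstacle. The one point worth checking is that the word \emph{basis} in Proposition \ref{s3.7-1} is the group-theoretic notion (unique expansion $\prod_{i=1}^{n}\left[e_{i}\right]^{k_{i}}$ with integer exponents) and so matches exactly the definition of free abelian rather than the quantity-space notion of basis; this compatibility is already built into the statement and proof of Proposition \ref{s3.7-1}, so no additional verification is required.
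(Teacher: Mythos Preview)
Your proposal is correct and matches the paper's approach exactly: the paper likewise treats Proposition~\ref{314} as an immediate consequence of Propositions~\ref{p311} and~\ref{s3.7-1} together with the definition of a free abelian group of finite rank recalled just before the statement, and in fact does not even include a separate proof block. Your remark that the basis in Proposition~\ref{s3.7-1} is already the group-theoretic one is precisely the point the paper relies on implicitly.
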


Recall that any two bases for a free abelian group $G$ have the same
cardinality, the rank of $G$. Proposition \ref{s3.7-1} thus leads
to an analogue of the dimension theorem for finite-dimensional vector
spaces.
\begin{prop}
\label{s3.8}If $Q$ is a quantity space then any two bases for $Q$
have the same cardinality. 
\end{prop}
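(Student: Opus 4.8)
The plan is to transfer the question from the quantity space $Q$ to its canonical quotient $Q/{\sim}$, where the desired invariance is a classical fact about free abelian groups, already recalled in the paragraph preceding the statement.

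First I would take two arbitrary bases $E=\left\{ e_{1},\ldots,e_{n}\right\} $ and $E'=\left\{ e_{1}',\ldots,e_{m}'\right\} $ for $Q$, and apply Proposition \ref{s3.7-1} to each. That proposition yields that $\mathsf{E}=\left\{ \left[e_{1}\right],\ldots,\left[e_{n}\right]\right\} $ is a basis for the monoid $Q/{\sim}$ whose cardinality equals $\left|E\right|=n$, and likewise that $\mathsf{E}'=\left\{ \left[e_{1}'\right],\ldots,\left[e_{m}'\right]\right\} $ is a basis for $Q/{\sim}$ of cardinality $m$. The crucial feature of Proposition \ref{s3.7-1} that I would invoke is not merely that it produces \emph{some} basis, but that it produces one of the \emph{same cardinality}: the map $e_{i}\mapsto\left[e_{i}\right]$ is shown there to be a bijection, since distinct basis elements land in distinct dimensions by Lemma \ref{s3.3}, so no collapsing occurs in passing to $Q/{\sim}$.

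Next, Proposition \ref{314} tells us that $Q/{\sim}$ is a free abelian group of finite rank, and it is standard that any two bases for a free abelian group have the same cardinality, namely the rank of the group. Applying this to the two bases $\mathsf{E}$ and $\mathsf{E}'$ of $Q/{\sim}$ gives $\left|\mathsf{E}\right|=\left|\mathsf{E}'\right|$, that is $n=m$, which is exactly the assertion.

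I do not expect any genuine obstacle here: the argument is a two-line reduction once Propositions \ref{s3.7-1} and \ref{314} are available, and the only point needing attention is the cardinality-preservation remark above, which is already built into Proposition \ref{s3.7-1}. In effect, the entire content of the dimension-theorem analogue for quantity spaces is pushed onto the invariance of rank for finitely generated free abelian groups, with Proposition \ref{s3.7-1} serving as the faithful bridge between bases of $Q$ and bases of $Q/{\sim}$.
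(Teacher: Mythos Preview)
Your proposal is correct and follows essentially the same approach as the paper: take two bases, pass to $Q/{\sim}$ via Proposition~\ref{s3.7-1} (which preserves cardinality), and invoke the invariance of rank for free abelian groups to conclude $\left|E\right|=\left|\mathsf{E}\right|=\left|\mathsf{E}'\right|=\left|E'\right|$. The paper's proof is the one-line version of exactly what you wrote.
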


\begin{proof}
If $E=\left\{ e_{1},\ldots,e_{n}\right\} $ and $E'=\left\{ e_{1}',\ldots,e_{m}'\right\} $
are bases for $Q$, so that $\mathsf{E}=\left\{ \left[e_{1}\right],\ldots,\left[e_{n}\right]\right\} $
and $\mathsf{E}'=\left\{ \left[e_{1}'\right],\ldots,\left[e_{m}'\right]\right\} $
are bases for $Q/{\sim}$, then $\left|E\right|=\left|\mathsf{E}\right|=\left|\mathsf{E'}\right|=\left|E'\right|$
by Proposition \ref{s3.7-1} and the equicardinality of bases for
a free abelian group.
\end{proof}
A quantity space with bases of cardinality $n$ is said to be of \emph{rank}
$n$.
\begin{example}
The dimensions corresponding to base quantities in the International
System of Units (SI) \cite{SI}, such as the dimensions of length,
time and mass, denoted $\mathsf{L}$, $\mathsf{T}$ and $\mathsf{M}$,
respectively, are elements of a basis for some free abelian group
$Q/\!\sim$. For example, $\left\{ \mathsf{L},\mathsf{T},\mathsf{M}\right\} $
is a basis for $Q/\!\sim$, where $Q$ is a quantity space for classical
mechanics. This is not the only possible basis, however. For example,
$\left\{ \mathsf{L},\mathsf{T},\mathsf{F}\right\} ,$ where $\mathsf{F}=\mathsf{MLT}^{-2}$,
is another three-element basis for $Q/\!\sim$, and another possible
set of base dimensions for classical mechanics.
\end{example}

Let us consider quantity spaces $Q$ and $Q'$ over $K$ with bases
$E=\left\{ e_{1},\ldots,e_{n}\right\} $ and $E'=\left\{ e_{1}',\ldots,e_{n}'\right\} $.
It is easy to verify that a bijection $\phi:E\rightarrow E'$ can
be extended to an isomorphism $\phi^{*}:Q\rightarrow Q'$ by setting
$\phi^{*}\left(\mu\cdot\prod_{i=1}^{n}e_{i}^{k_{i}}\right)=\mu\cdot\prod_{i=1}^{n}\phi\left(e_{i}\right)^{k_{i}}$.
Conversely, if $\phi^{*}:Q\rightarrow Q'$ is an isomorphism then
$\left\{ \phi^{*}\left(e_{1}\right),\ldots,\phi^{*}\left(e_{n}\right)\right\} $
is clearly a basis for $Q'$ of the same cardinality as $E$. These
observations lead to the following classification theorem, similar
to a theorem in linear algebra:
\begin{prop}
Quantity spaces over the same field are isomorphic if and only if
they are of the same rank (cf. \cite{RAP}).
\end{prop}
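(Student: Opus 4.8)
The plan is to prove the biconditional by establishing the two implications separately, in each case invoking the two observations recorded immediately before the statement together with the equicardinality of bases (Proposition~\ref{s3.8}), which is precisely what makes the notion of \emph{rank} well defined and hence the statement meaningful.

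For the ``if'' direction, I would start from two quantity spaces $Q$ and $Q'$ over the same field $K$, both of rank $n$. By the definition of rank and Proposition~\ref{s3.8}, I may choose bases $E=\left\{e_{1},\ldots,e_{n}\right\}$ for $Q$ and $E'=\left\{e_{1}',\ldots,e_{n}'\right\}$ for $Q'$, each of cardinality $n$. Since $E$ and $E'$ are finite sets of equal cardinality, there is a bijection $\phi\colon E\to E'$, say $\phi\left(e_{i}\right)=e_{i}'$. I would then appeal to the first observation preceding the statement: $\phi$ extends to the map $\phi^{*}\colon Q\to Q'$ given by $\phi^{*}\left(\mu\cdot\prod_{i=1}^{n}e_{i}^{k_{i}}\right)=\mu\cdot\prod_{i=1}^{n}\phi\left(e_{i}\right)^{k_{i}}$, which is an isomorphism of scalable monoids over $K$. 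Hence $Q\cong Q'$.

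For the ``only if'' direction, I would begin with an isomorphism $\phi^{*}\colon Q\to Q'$ and fix a basis $E=\left\{e_{1},\ldots,e_{n}\right\}$ for $Q$, so that $Q$ has rank $n$. By the second observation preceding the statement, $\left\{\phi^{*}\left(e_{1}\right),\ldots,\phi^{*}\left(e_{n}\right)\right\}$ is a basis for $Q'$ of the same cardinality $n$ as $E$. Then Proposition~\ref{s3.8} forces \emph{every} basis of $Q'$ to have cardinality $n$, so $Q'$ too has rank $n$, and $Q$ and $Q'$ share the same rank.

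The argument is essentially bookkeeping once the two bridging observations are granted, so there is no serious obstacle in the proposition itself. The only point requiring care --- and the place where the real work already lies --- is the verification underlying those observations: that the formula for $\phi^{*}$ is well defined (this uses the uniqueness of the expansions from Definition~\ref{def:qs-bas}), that it respects monoid multiplication and the scaling action so as to be a genuine homomorphism, that it is bijective with inverse induced by $\phi^{-1}$, and conversely that an isomorphism carries a basis to a set again satisfying the unique-expansion property. Since these are exactly the ``easy to verify'' claims stated just before the proposition, I would treat them as established and simply cite them rather than re-derive them.
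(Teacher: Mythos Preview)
Your proposal is correct and matches the paper's approach exactly: the paper does not give a separate proof environment for this proposition but treats it as an immediate consequence of the two observations stated just before it, together with Proposition~\ref{s3.8}, precisely as you outline. Your added remark that the real work lies in verifying those two observations is apt, since the paper likewise leaves them as ``easy to verify'' and ``clearly'' true.
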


There is a reciprocal connection between bases for $Q$ and bases
for $Q/\!\sim$.
\begin{prop}
\label{p33}Let $Q$ be a quantity space, and let $\mathsf{E}=\left\{ \mathsf{e}_{1},\ldots,\mathsf{e}_{n}\right\} $
be a basis for $Q/{\sim}$. Then there is a subset $E=\left\{ e_{1},\ldots,e_{n}\right\} $
of $Q$ such that $e_{i}\in\mathsf{e}_{i}$ and $E$ is a basis for
$Q$.
\end{prop}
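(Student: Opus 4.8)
The plan is to prove the converse of Proposition \ref{s3.7-1}: that \emph{any} lift of a basis of $Q/{\sim}$ by non-zero quantities is automatically a basis of $Q$. First I would choose, for each $i$, a non-zero representative $e_i\in\mathsf{e}_i$; this is possible because every dimension contains a non-zero unit quantity (Proposition \ref{preP39}). Since the $\mathsf{e}_i$ are distinct classes, the $e_i$ are distinct, so $|E|=n$, and each $e_i$ is invertible by Proposition \ref{p20-1}. Because $e_i\in\mathsf{e}_i$ we have $[e_i]=\mathsf{e}_i$, so $\{[e_1],\ldots,[e_n]\}$ is exactly the given basis $\mathsf{E}$ of the free abelian group $Q/{\sim}$ (recall that $Q/{\sim}$ is free abelian by Proposition \ref{314}). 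It then remains to verify the two clauses of Definition \ref{def:qs-bas}, that is, existence and uniqueness of an expansion $x=\mu\cdot\prod_i e_i^{k_i}$.

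For existence, take $x\in Q$ and expand its class in the group basis $\mathsf{E}$: $[x]=\prod_{i=1}^n[e_i]^{d_i}$ for unique integers $d_i$. Using $[xy]=[x][y]$ (Definition \ref{def:214}), the identity $[e_i]^{-1}=[e_i^{-1}]$ (since $[e_i][e_i^{-1}]=[1_Q]=1_{Q/{\sim}}$), and the invertibility of the $e_i$, this rewrites as $[x]=\bigl[\prod_i e_i^{d_i}\bigr]$, so $x\sim w$ where $w=\prod_i e_i^{d_i}$. The key point is that $w$ is non-zero, since products and inverses of non-zero quantities are non-zero by Corollary \ref{cor35}. Hence $w$ is a unit quantity for its dimension $[w]=[x]$ by Proposition \ref{p22}, so there is some $\nu\in K$ with $x=\nu\cdot w=\nu\cdot\prod_i e_i^{d_i}$, the desired expansion.

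For uniqueness, suppose $\nu\cdot\prod_i e_i^{d_i}=\nu'\cdot\prod_i e_i^{d_i'}$. Applying the quotient map and using $[\lambda\cdot z]=[z]$ (Corollary \ref{c31}) gives $\prod_i[e_i]^{d_i}=\prod_i[e_i]^{d_i'}$, whence $d_i=d_i'$ for all $i$ by the uniqueness of expansions in the group basis $\mathsf{E}$. The two sides then read $\nu\cdot w=\nu'\cdot w$ with $w=\prod_i e_i^{d_i}$ a non-zero unit quantity, so $\nu=\nu'$ by the defining uniqueness property of a unit element (Proposition \ref{p22}).

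I expect the only real subtlety to be conceptual rather than computational: the exponent bookkeeping must be carried out in the group $Q/{\sim}$, where $\mathsf{E}$ is genuinely a basis, and then transported back to $Q$, rather than attempting to match exponents directly in the monoid. The one fact that makes this transport legitimate is that $\prod_i e_i^{d_i}$ never collapses to a zero quantity, which is exactly Corollary \ref{cor35} (equivalently, that $Q$ has no zero divisors); this is what licenses the use of Proposition \ref{p22} to recover both the scalar $\nu$ and its uniqueness. A more computational alternative, fixing an auxiliary basis $B$ for $Q$ and using that the transition matrix between $\mathsf{E}$ and $\{[b_j]\}$ lies in $GL_n(\mathbb{Z})$, would also work, but the group-theoretic route above avoids choosing $B$ and is cleaner.
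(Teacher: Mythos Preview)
Your proof is correct and follows essentially the same route as the paper's: choose non-zero representatives $e_i\in\mathsf{e}_i$, use the group basis $\mathsf{E}$ of $Q/{\sim}$ to obtain the exponent tuple for any $x$, observe that $\prod_i e_i^{d_i}$ is non-zero and hence a unit quantity (Proposition~\ref{p22}) to recover the scalar, and read off uniqueness of both the exponents and the scalar by passing to $Q/{\sim}$ and invoking the unit-element property. The only cosmetic differences are that the paper simply asserts one can pick non-zero representatives rather than citing Proposition~\ref{preP39}, and it does not explicitly name Corollary~\ref{cor35} for the non-vanishing of $\prod_i e_i^{d_i}$; your citations make the argument slightly more self-contained but do not change its substance.
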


\begin{proof}
We can choose a function $\psi:\mathsf{E}\rightarrow\left\{ \psi\left(\mathsf{e}{}_{1}\right),\ldots,\psi\left(\mathsf{e}{}_{n}\right)\right\} $
such that we have $0_{\mathsf{e}_{i}}\neq\psi\left(\mathsf{e}{}_{i}\right)\in\mathsf{e}_{i}$
for all $\mathsf{e}_{i}$. This is a surjective function, and $\mathsf{e}_{i}\neq\mathsf{e}_{j}$
implies $\mathsf{e}_{i}\cap\mathsf{e}_{j}=\textrm{Ø}$, so  $\psi$
is injective as well and hence a bijection. For convenience, we write
$\psi\left(\mathsf{e}{}_{i}\right)$ as $e_{i}$. Each $e_{i}$ is
invertible by Proposition \ref{p20-1}.

Let $x$ be an arbitrary quantity in $Q$. As $\mathsf{E}$ is a basis
for $Q/{\sim}$, we have $\left[x\right]=\prod_{i=1}^{n}\mathsf{e}_{i}^{k_{i}}=\prod_{i=1}^{n}\left[e_{i}\right]^{k_{i}}=\left[\prod_{i=1}^{n}e_{i}^{k_{i}}\right]$
for some integers $k_{1},\ldots,k_{n}$, and as $e_{i}\neq0_{\mathsf{e}_{i}}$
for each $e_{i}$, $\prod_{i=1}^{n}e_{i}^{k_{i}}$ is non-zero and
thus a unit quantity for $\left[x\right]$ by Proposition \ref{p22}.
Hence, there exists a unique $\mu\in K$ for $\prod_{i=1}^{n}e_{i}^{k_{i}}$
such that $x=\mu\cdot\prod_{i=1}^{n}e_{i}^{k_{i}}$. Also, if $x=\mu\cdot\prod_{i=1}^{n}e_{i}^{k_{i}}=\nu\cdot\prod_{i=1}^{n}e_{i}^{\ell_{i}}$
then $\left[\prod_{i=1}^{n}e_{i}^{k_{i}}\right]=\left[\prod_{i=1}^{n}e_{i}^{\ell_{i}}\right]$,
so $\prod_{i=1}^{n}\left[e_{i}\right]^{k_{i}}=\prod_{i=1}^{n}\left[e_{i}\right]^{\ell_{i}}$,
so $\ell_{i}=k{}_{i}$ for $i=1,\ldots,n$, since $\mathsf{E}$ is
a basis for $Q/{\sim}$, so $\nu=\mu$.
\end{proof}
We can now extend to quantity spaces the theorem that a subgroup of
a free abelian group is free abelian, using this fact.
\begin{prop}
If a subalgebra $Q'$ of a quantity space $Q$ regarded as a scalable
monoid contains all inverses of elements of $Q'$ then $Q'$ is a
quantity space.
\end{prop}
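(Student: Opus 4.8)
The plan is to reduce the statement to the classical fact that a subgroup of a free abelian group of finite rank is again free abelian of finite rank, and then to transport a basis back up to $Q'$ by the lifting device already used in Proposition \ref{p33}. Since the scalable monoids over $K$ form a variety and varieties are closed under subalgebras (Section \ref{s12}), the subalgebra $Q'$ is automatically a commutative scalable monoid over $K$, commutativity being inherited from $Q$. Hence the whole content of the proposition is the existence of a finite quantity-space basis for $Q'$.

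First I would pass to dimensions. Let $\phi\colon Q\to Q/{\sim}$ be the canonical homomorphism of Proposition \ref{s3.3-1}, and let $H$ be the set of dimensions $[x]$ with $x\in Q'$ \emph{non-zero}. I claim $H$ is a subgroup of the free abelian group $Q/{\sim}$ of Proposition \ref{314}. It contains $1_{Q/{\sim}}=[1_{Q}]$, since $1_{Q}=1_{Q'}\in Q'$ is non-zero by Proposition \ref{p20-1}; it is closed under products because the product of two non-zero elements of $Q'$ lies in $Q'$ and is again non-zero by Corollary \ref{cor35}; and it is closed under inverses precisely because each non-zero $x\in Q'$ is invertible (Proposition \ref{p20-1}) with $x^{-1}\in Q'$ by hypothesis, giving $[x]^{-1}=[x^{-1}]\in H$. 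Being a subgroup of a free abelian group of rank $n$, $H$ is free abelian of some finite rank $m\le n$, so it admits a finite basis $\{\mathsf{d}_{1},\ldots,\mathsf{d}_{m}\}$.

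Next I would lift this basis exactly as in Proposition \ref{p33}. For each $\mathsf{d}_{i}\in H$ choose a non-zero $f_{i}\in Q'$ with $[f_{i}]=\mathsf{d}_{i}$; each $f_{i}$ is invertible by Proposition \ref{p20-1} and lies in $Q'$ together with its inverse, and I set $E'=\{f_{1},\ldots,f_{m}\}$. To check that $E'$ is a quantity-space basis for $Q'$, take $x\in Q'$; if $x$ is non-zero then $[x]\in H$, so $[x]=\prod_{i=1}^{m}\mathsf{d}_{i}^{\,k_{i}}=\bigl[\prod_{i=1}^{m}f_{i}^{\,k_{i}}\bigr]$ for unique integers $k_{i}$, whence $x\sim\prod_{i=1}^{m}f_{i}^{\,k_{i}}$. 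The quantity $\prod_{i=1}^{m}f_{i}^{\,k_{i}}$ is a product of invertibles, hence non-zero by Corollary \ref{cor35} and so a unit quantity for its dimension by Proposition \ref{p22}; therefore there is a unique $\mu\in K$ with $x=\mu\cdot\prod_{i=1}^{m}f_{i}^{\,k_{i}}$, and uniqueness of the exponents follows by applying $\phi$ and invoking that $\{\mathsf{d}_{i}\}$ is a basis for $H$. All the $f_{i}$ and their products lie in $Q'$, so the expansion takes place inside $Q'$.

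The step I expect to be the crux is establishing that $H$ is a genuine subgroup rather than a mere submonoid: this is exactly where the hypothesis that $Q'$ contains the inverses of its elements is indispensable, working in tandem with the absence of zero divisors (Corollary \ref{cor35}). The truly delicate point, and the one I would scrutinise most, is the treatment of \emph{zero quantities}: the argument above expands every non-zero element of $Q'$, but $Q'$ may a priori contain the zero quantity $0_{\mathsf{C}}$ of a dimension $\mathsf{C}$ for which $Q'$ has \emph{no} non-zero representative, and such a $\mathsf{C}$ would then lie outside $H$ and outside the span of $E'$. The remaining work is therefore to show that a zero quantity in $Q'$ always forces a non-zero (hence invertible) element of the same dimension into $Q'$ — equivalently, that $\phi(Q')$ coincides with the subgroup $H$ — after which the unit-quantity argument of the previous paragraph covers $0_{\mathsf{C}}=0\cdot\prod_{i=1}^{m}f_{i}^{\,k_{i}}$ as well, completing the expansion for all of $Q'$.
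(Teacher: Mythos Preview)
Your approach is exactly the paper's: pass to $Q'/{\sim}$, argue it is a subgroup of the free abelian group $Q/{\sim}$, then lift a basis via Proposition~\ref{p33}. The paper dispatches the inverse step by writing $\breve{x}=1\cdot\prod e_i^{k_i}$ for $x=\mu\cdot\prod e_i^{k_i}$ and asserting ``$x\in Q'$ implies $\breve{x}\in Q'$'', whence $[\breve{x}^{-1}]=[x]^{-1}\in Q'/{\sim}$. For non-zero $x$ this is immediate, since $\breve{x}=\mu^{-1}\cdot x$ and $Q'$ is closed under scaling; so your subgroup $H$ of dimensions represented by \emph{non-zero} elements of $Q'$ is handled correctly and agrees with the paper's intended argument.

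The concern you raise about isolated zero quantities is, however, a genuine gap---in the paper's proof and in the proposition itself. Take $Q=K\llbracket x,x^{-1}\rrbracket$ and
\[
Q'=\{\lambda\cdot 1_Q:\lambda\in K\}\ \cup\ \{0_{[x^{k}]}:k\ge 1\}.
\]
One checks directly that $Q'$ contains $1_Q$, is closed under multiplication and scaling, and that every invertible element of $Q'$ (namely the non-zero scalars $\lambda\cdot 1_Q$) has its inverse in $Q'$; so the hypothesis holds. Yet $Q'/{\sim}=\{[x^{k}]:k\ge 0\}$ is only a monoid, not a group, and $Q'$ admits no quantity-space basis: any basis element would have to be invertible, hence lie in $[1_Q]$, so no expansion $\mu\cdot\prod f_i^{k_i}$ can ever hit $0_{[x]}\ne 0_{[x^{2}]}$. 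Thus the implication ``$x\in Q'\Rightarrow\breve{x}\in Q'$'' fails for $x=0_{[x]}$, and the ``remaining work'' you identified cannot be completed without an additional hypothesis (e.g.\ that $Q'$ contain a non-zero element in every dimension it meets, or equivalently that $Q'/{\sim}$ already be a group).
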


\begin{proof}
First note that $Q'$ is a scalable monoid, so $Q'\!/{\sim}$ is a
monoid. Also, recall from the proof of Proposition \ref{p311} that
$\left[\breve{x}^{-1}\right]=\left[x\right]^{-1}$ so if $\breve{x}\in Q'$
implies $\breve{x}^{-1}\in Q'$ then $\left[x\right]\in Q'/{\sim}$
implies $\left[x\right]^{-1}\in Q'/{\sim}$ since $x\in Q'$ implies
$\breve{x}\in Q'$. Hence, $Q'\!/{\sim}$ is a subgroup of $Q/{\sim}$,
so $Q'\!/{\sim}$ is a free abelian group with a basis $\mathsf{E}$
corresponding to a basis $E$ for $Q'$ by Proposition \ref{p33}.
\end{proof}
This result is analogous also to the simple fact that a submodule
of a vector space is a vector space, so we have found yet another
similarity between free abelian groups, quantity spaces and vector
spaces.%

\appendix

\section{Coherent systems of units in quotient spaces}
\begin{prop}
\label{p9-1}Let $X$ be a scalable monoid over $R$, $S$ a coherent
system of unit elements for $X$, and $T\subseteq S$ a normal submonoid
of $X$. Then $X/T$ is a scalable monoid, $\left[t\right]_{T}=\left[1_{\!X}\right]_{T}$
for any $t\in T$, and $S/T=\left\{ \left[s\right]_{T}\mid s\in S\right\} $
is a coherent system of unit elements for $X/T$. 
\end{prop}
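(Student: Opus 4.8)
The plan is to verify the three assertions in turn, reducing each to facts already established for the relation $\sim_T$ and for the coherent system $S$. Since $T$ is a normal submonoid of $X$, Proposition~\ref{p8} immediately gives that $X/T$ is a scalable monoid over $R$ and that $\phi_T\colon x\mapsto[x]_T$ is a surjective homomorphism, so the first claim is immediate. For the second claim, I would observe that $1_{\!X}t=t\,1_{\!X}$ with $1_{\!X}\in T$, which by Definition~\ref{def5} means $t\sim_T 1_{\!X}$, hence $[t]_T=[1_{\!X}]_T$; and $[1_{\!X}]_T=1_{X/T}$ because $[1_{\!X}]_T$ is the identity of $X/T$ under the multiplication of Definition~\ref{def:214-1}.

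The bulk of the work is the third claim, that $S/T$ is a coherent system of unit elements for $X/T$, i.e., by Definitions~\ref{d8} and~\ref{d12}, a submonoid of $X/T$ that is dense, sparse, and consists of unit elements. The submonoid property is routine: $\phi_T$ is a homomorphism and $S$ is a submonoid, so $[s]_T[s']_T=[ss']_T\in S/T$ and $1_{X/T}=[1_{\!X}]_T\in S/T$.

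The crucial step, and the main obstacle, is sparseness of $S/T$, which forces me to unwind three nested equivalences at once. Starting from $[s]_T\sim[s']_T$ with $s,s'\in S$, I would translate this into $\rho\cdot s\sim_T\sigma\cdot s'$ for suitable $\rho,\sigma\in R$, then into $t(\rho\cdot s)=t'(\sigma\cdot s')$ for some $t,t'\in T$; applying the bilinearity and commutativity from Lemma~\ref{thm:lem1} rewrites this as $\rho\cdot ts=\sigma\cdot t's'$, so that $ts\sim t's'$. Here the hypothesis $T\subseteq S$ is essential: it guarantees $ts,t's'\in S$, so sparseness of $S$ in $X$ yields $ts=t's'$ and hence $[s]_T=[s']_T$. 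I expect this interplay --- pushing the quotient's $\sim$ down to $\sim_T$, then to $\sim$ on $X$, and using $T\subseteq S$ to land back inside $S$ --- to be the delicate point.

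Finally, for density and the unit-element property I would fix an orbitoid of $X/T$ and a representative $[x]_T$; density of $S$ in $X$ supplies $s\in S$ with $s\sim x$, and since $s$ is a unit element $x=\rho\cdot s$, giving $[x]_T=\rho\cdot[s]_T$ and thus density of $S/T$. Combined with the sparseness just proved, any two elements of a single orbitoid of $X/T$ turn out to be scalar multiples of one common $[s]_T$, so $[s]_T$ is a generating element. The remaining uniqueness condition of Definition~\ref{d26} --- that $\lambda\cdot[s]_T=\lambda'\cdot[s]_T$ implies $\lambda=\lambda'$ --- I would again reduce through $\sim_T$ and Lemma~\ref{thm:lem1} to $\lambda\cdot ms=\lambda'\cdot ns$ with $ms,ns\in S$, conclude $ms=ns$ by sparseness of $S$, and then use that $ms\in S$ is itself a unit element to cancel and obtain $\lambda=\lambda'$.
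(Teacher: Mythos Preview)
Your proposal is correct and follows essentially the same route as the paper's own proof: you invoke Proposition~\ref{p8} for the quotient, use $1_{\!X}t=t\,1_{\!X}$ for $[t]_T=[1_{\!X}]_T$, push $[s]_T\sim[s']_T$ down through $\sim_T$ to $ts\sim t's'$ and exploit $T\subseteq S$ with sparseness of $S$ exactly as the paper does, and your uniqueness argument for the unit-element property (reduce to $\lambda\cdot ms=\lambda'\cdot ns$, conclude $ms=ns$ from sparseness, then cancel using that $ms\in S$ is a unit element) is precisely the paper's. One cosmetic remark: the step $t(\rho\cdot s)=\rho\cdot ts$ follows directly from the scaling axiom $\alpha\cdot xy=x(\alpha\cdot y)$ in Definition~\ref{thm:def1} rather than from Lemma~\ref{thm:lem1}, but this does not affect the validity of your argument.
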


\begin{proof}
By Proposition \ref{p8}, $X/T$ is a scalable monoid since $T$ is
a normal submonoid of $X$, and if $t\in T$ then $t\sim_{_{T}}\!1_{\!X}$
since $1_{\!X}t=t1_{\!X}$ and $1_{\!X}\in T$.

Thus, $\left[t\right]_{T}=\left[1_{\!X}\right]_{T}=1_{X/T}$ since
\[
\left[1_{\!X}\right]_{T}\left[x\right]_{T}=\left[1_{\!X}x\right]_{T}=\left[x\right]_{T}=\left[x1_{\!X}\right]_{T}=\left[x\right]_{T}\left[1_{\!X}\right]_{T}
\]
 for any $x\in X$. Also, if $\left[s\right]_{T},\left[s'\right]_{T}\in S/T$,
meaning that $s,s'\in S$, then $\left[s\right]_{T}\left[s'\right]_{T}=\left[ss'\right]_{T}\in S/T$
since $ss'\in S$. Hence, $S/T$ is a submonoid of $X/T$.

Assume that $\left[s\right]_{T}\sim\left[s'\right]_{T}$, where $s,s'\in S$.
Then $\rho\cdot\left[s\right]_{T}=\sigma\cdot\left[s'\right]_{T}$
for some $\rho,\sigma\in R$, so $\left[\rho\cdot s\right]_{T}=\left[\sigma\cdot s'\right]_{T}$,
so $\rho\cdot s\sim_{T}\sigma\cdot s'$, so $t\left(\rho\cdot s\right)=t'\left(\sigma\cdot s'\right)$
for some $t,t'\in T$, so $\rho\cdot ts=\sigma\cdot t's'$, so $ts\sim t's'$
where $ts,t's'\in S$. Hence, $ts=t's'$ since $S$ is sparse in $X$,
so $s\sim_{T}s'$, %
meaning that $\left[s\right]_{T}=\left[s'\right]_{T}$. Thus, $S/T$
is a sparse set of elements of $X/T$.

Consider any $\mathsf{C}\in X/T$ and let $x\in X$ be such that $\left[x\right]_{T}\in\mathsf{C}$.
Then there is some $s\in S$ and some $\rho\in R$ such that $x=\rho\cdot s$
since $S$ is a dense set of unit elements in $X$, and hence $\left[x\right]_{T}=\left[\rho\cdot s\right]_{T}=\rho\cdot\left[s\right]_{T}$.
If $\rho\cdot\left[s\right]_{T}=\sigma\cdot\left[s\right]_{T}$ then
$\left[\rho\cdot s\right]_{T}=\left[\sigma\cdot s\right]_{T}$, so
$\rho\cdot s\sim_{T}\sigma\cdot s$, and as in the preceding paragraph
this implies that %
$\rho\cdot ts=\sigma\cdot t's$ and $ts=t's$ for some $t,t'\in T$%
. Hence, $\rho=\sigma$ since $ts\in S$ is a unit element for $\left[ts\right]$.
This means that $\left[s\right]_{T}$ is a unit element for $\mathsf{\mathsf{C}},$
so $S/T$ is dense in $X/T$ as well as sparse.%
\end{proof}
As a simple example, $S=\left\{ 1x^{k_{1}}y^{k_{2}}z^{k_{3}}\mid k_{1},k_{2},k_{3}\in\mathbb{Z}_{\geq0}\right\} $
is a coherent system of unit elements for $\mathbb{R}\left\llbracket x;y;z\right\rrbracket $,
$T=\left\{ 1x^{k_{1}}y^{k_{2}}z^{0}\mid k_{1},k_{2}\in\mathbb{Z}_{\geq0}\right\} $
is a normal submonoid of $\mathbb{R}\left\llbracket x;y;z\right\rrbracket $,
$T\subseteq S$, and $\mathbb{R}\left\llbracket x;y;z\right\rrbracket /T$
is isomorphic to $\mathbb{R}\left\llbracket z\right\rrbracket $.

A typical application of Proposition \ref{p9-1} in physics is described
by Raposo \cite{RAP}:
\begin{quotation}
{\footnotesize{}The mechanism of taking quotients is the algebraic
tool underlying what is common practice in physics of choosing \textquotedblleft systems
of units'' such that some specified universal constants become dimensionless
and take on the numerical value 1. {[}...{]} But it has to be remarked
that the mechanism goes beyond a change of system of units; it is
indeed a change of space of quantities. {[}p. 153{]}}{\footnotesize \par}
\end{quotation}
For example, one may decide to measure both time and length by means
of a measure for length, using the universal constant $c$, thus introducing
a new system of units such that times and lengths are not distinguishable
(see further \cite{KIT}). Such an operation amounts to a projection
$x\mapsto\left[x\right]_{T}$ of the original space of quantities
$X$ onto a quotient space $X/T$. In terms of $S$ and $T$, if $S=\left\{ t^{k_{1}}\ell^{k_{2}}\mid k_{1},k_{2}\in\mathbb{Z}\right\} $,
where $t$ is a unit for time and $\ell$ a unit for length, is a
coherent system of unit elements for $X$ and we set $T=\left\{ t^{k}\mid k\in\mathbb{Z}\right\} $
then $S/T$ corresponds to $\left\{ \ell^{k}\mid k\in\mathbb{Z}\right\} $,
meaning that both time and length are measured by reference to a unit
in $X/T$ corresponding to a unit for length in $X$.

\newpage{}

\section{Notes on related work and fields of application}

\subsection{\label{a1}Theoretical approaches}

In 1945, Landolt \cite{LAN}, apparently%
{} inspired by the development of abstract algebra during the interwar
period in Europe \cite{vdWAE}, called attention to group operations
on systems of quantities. Specifically, he pointed out that the invertible
quantities form a group under \textquotedblleft \emph{qualitative
Verknüpfung}\textquotedblright , that is, multiplication of quantities,
and that quantities of the same kind form a group under \textquotedblleft \emph{intensive
Verknüpfung}\textquotedblright , that is, addition of quantities. 

In a seminal article, Fleischmann \cite{FLEI} shifted the focus from
quantities (\emph{Grössen}) to \emph{kinds} of quantities (\emph{Grössenarten}),
and suggested that kinds of quantities can themselves be multiplied,
requiring that if $q$ is of kind $K$ and $q'$ is of kind $\text{ \ensuremath{K'} }$
then $qq'$ is of kind $KK'$. He also proposed that a set of kinds
of quantities with a product defined in this way would be a finitely
generated free abelian group. %

Quade \cite{QUAD} defines systems of quantities by means of a rather
complicated construction with one-dimensional vector spaces as building
blocks. As a first step, he defines a quantity system as the union
$U$ of all vector spaces $V_{i}$ in a countably infinite set $V$
of pairwise disjoint one-dimensional vector spaces over the real or
complex numbers. He then assumes that for any $x,x'\in U$ there exists
an associative, commutative product $xx'$ satisfying $\lambda\left(xx'\right)=\left(\lambda x\right)x'$
and such that $VV'=\left\{ xx'\mid x\in V,x'\in V'\right\} $ is a
one-dimensional vector space. Next, he selects a finite number of
vector spaces $V_{1},\ldots,V_{n}\in V$ and considers the set $\mathfrak{G}$
of all products of vector spaces of the form $\prod\nolimits _{j=1}^{n}V_{j}^{k_{j}}$,
where $k_{j}$ are integers. $\mathfrak{G}$ is a finitely generated
free abelian group of rank $n$. To ensure the supply of inverses
of quantities, he embeds the set of non-zero elements of the selected
vector spaces in a group of fractions $\mathfrak{S}$, using the fact
that $U$ is a commutative semigroup.%

Quade's construction is actually even more complicated than sketched
here. The point is that $\mathfrak{S}$ corresponds to a set of quantities,
while $\mathfrak{G}$ corresponds to a set of kinds of quantities
(or dimensions). Landolts and Fleischmann's ideas are elaborated formally,
but unfortunately not clarified.

{} 

Carlson's \cite{CARL} definition of quantities is based on a set
of ''pre-units'' which is in effect a predefined basis $\Xi$ for
a finite-dimensional vector space $V_{\Xi}$ over $\mathbb{Q}$ with
multiplication as binary operation, so that $\xi+\eta$ is written
as $\xi\eta$ and $\lambda\xi$ as $\xi^{\lambda}$. Carlson then
defines a quantity as a pair $\left(r,\xi\right)$, where $r$ is
a real number and $\xi$ a pre-unit. Multiplication of quantities
by real numbers is defined by setting $a\left(r,\xi\right)=\left(ar,\xi\right)$,
multiplication of quantities is defined by setting $\left(r,\xi\right)\left(s,\eta\right)=\left(rs,\xi\eta\right)$
and fractional powers of quantities are defined by setting $\left(r,\xi\right)^{m/n}=\left(\sqrt[n]{r^{m}},\xi^{m/n}\right)$
if a (unique) positive real nth root $\sqrt[n]{r^{m}}$ of $r^{m}$
exists; we obtain a quantity structure $\mathbb{R}\times V_{\Xi}$.
A ''fundamental system of units'' for $\mathbb{R}\times V_{\Xi}$
is a set $\left\{ \left(u_{1},\xi_{1}\right),\ldots,\left(u_{m},\xi_{m}\right)\right\} $
such that $u_{i}\in\mathbb{R}_{>0}$ and $\xi_{i}\in\Xi$, where every
$q\in\mathbb{R}\times V_{\Xi}$ has a unique expansion $q=r\left(u_{1},\xi_{1}\right)^{\epsilon_{1}}\cdots\left(u_{m},\xi_{m}\right)^{\epsilon_{m}}$,
where $r\in\mathbb{R}$. The sequence of exponents $\epsilon_{1},\ldots,\epsilon_{m}$
associated with $q$, which is clearly the same for all fundamental
systems of units, Carlson calls the ''dimensions'' of $q$, and
he says that quantities are of the same kind if they have the same
dimensions.

Carlson's construction is incomplete in the sense that vector space
operations for quantities of the same kind are not considered, and
multiplication of kinds of quantities is not defined. Instead, we
have multiplication of pre-units, which can be seen as units in a
fixed, coherent system of units.

{} 

The approach introduced by Drobot \cite{DROB} and developed by Whitney
\cite{WHIT} is based on the idea that the set of quantities itself
\textendash{} rather than a set of pre-units \textendash{} is just
a vector space $V_{Q}$ under multiplication of quantities and over
a field $\mathbf{R}$%
, so there is a scalar product $q^{\lambda}$, where $\lambda\in\mathbf{R},q\in V_{Q}$.
$V_{Q}$ is also assumed to contain a set $\boldsymbol{R}$ of scalars,
so another scalar product can be defined as a usual product $rq$,
where $r\in\boldsymbol{R},q\in V_{Q}$. (Thus, the authors identify
dimensionless quantities with scalars.) %
{} Both Drobot and Whitney define, in slightly different ways, sets
of quantities of the same kind, called \textquotedblleft dimensions\textquotedblright{}
by Drobot and \textquotedblleft birays\textquotedblright{} by Whitney,
and both define addition of quantities of the same kind, $q=\alpha u$
and $r=\beta u$ where $\alpha,\beta$ are scalars and $u$ is non-zero,
by the identity $q+r=\left(\alpha+\beta\right)\cdot u$ (although
only Whitney proves that this definition is legitimate). Letting $\left[q\right]$
denote the dimension/biray containing $q$, both authors define multiplication
of dimensions/birays by the identity $\left[x\right]$$\left[y\right]$
= $\left[xy\right]$, and exponentiation by $\left[x\right]^{\lambda}=\left[x^{\lambda}\right]$.
Whitney also proves that $q\left(r+s\right)=qr+qs$ for any quantities
$q,r,s$ such that $\left[r\right]=\left[s\right]$ and that $qs=rs$,
where $s$ is non-zero, implies $q=r$.%
{} 

Unfortunately, the assumptions that $V_{Q}$ is a vector space over
$\mathbf{R}$ with scalar multiplication $\left(\lambda,q\right)\mapsto q^{\lambda}$
and that $V_{Q}$ contains a set of scalars $\boldsymbol{R}$ are
not fully compatible. In particular, $q^{\lambda}$ is not a real
number for all $q,\lambda\in\mathbb{R}$, and if we require that $\boldsymbol{R}\subseteq\mathbb{R}_{>0}$
to avoid this problem then all quantities must be positive. Anyway,
while integral powers of quantities make sense in physics, it is not
clear how to interpret $q^{0.2}$ or $q^{\pi}$, where $q$ is a ''dimensionful''
quantity rather than a number.

Kock \cite{KOCK} proposed a limited but elegant construction also
based on the 'vector-space-with-embedded-scalars' idea. Accepting
the restriction to positive quantities, he pointed out that in the
short exact sequence of vector spaces over $\mathbb{Q}$,
\[
\mathbb{Q}_{>0}\overset{\iota}{\longrightarrow}P\overset{d}{\longrightarrow}D,
\]
where $\iota$ is an inclusion map, $d$ a surjective $\mathbb{Q}$-linear
map, and the kernel of $d$ is the image of $\iota$, $P$ can be
interpreted as a set of quantities, $D$ as a set of dimensions, $Q_{>0}$
as the ''dimensionless'' quantities in $P$ and, for every $M\in D$,
$d^{-1}\left(M\right)$ as a set of quantities of the same kind. The
operations on $\mathbb{Q}_{>0}$, $P$ and $D$ are the usual operations
in (multiplicatively written) vector spaces over $\mathbb{Q}$, and
the identities in the algebraic structure containing $\mathbb{Q}_{>0}$,
$P$ and $D$ are those that follow from the%
{} vector space axioms and the $\mathbb{Q}$-linearity of $\iota$ and
$d$.

More recently, Raposo \cite{RAP} has proposed a definition of a system
or \textquotedblright space\textquotedblright{} of quantities somewhat
similar to Quade\textquoteright s but more concise and elegant. By
this definition, a space of quantities $Q$ is an algebraic fiber
bundle, with fibers of quantities attached to dimensions (kinds of
quantities) in a base space assumed to be a finitely generated free
abelian group. Each fiber is again a one-dimensional vector space,
with scalar product $\lambda q$. Multiplication of quantities and
multiplication of dimensions are defined independently, but are assumed
to be compatible in the same sense as for Quade. The quantities constitute
a commutative monoid, and it is assumed that $q\left(r+s\right)=qr+qs$
for any quantities $q,r,s$, where $r,s$ are of the same kind, and
that $\lambda(qr)=(\lambda q)r$ for any scalar $\text{\ensuremath{\lambda}}$
and quantities $q,r$. Although this is technically not part of the
definition of a space of quantities, Raposo also assumes that if $q$
and $r$ are non-zero quantities then $qr$ is a non-zero quantity. 

Raposo's theory of quantity spaces (supplemented with a 'no zero divisors'
condition) is complete and free from anomalies. Compared to the theory
presented in this article, it contains some redundant elements, since
there are more primitive notions. However, the two theories have been
shown to be completely equivalent \cite{RAP2}. This lends credence
to both theories, since they were developed independently.

The table below contains a simplified comparison of some aspects of
six of the approaches to quantity calculus reviewed above.\medskip{}
\begin{singlespace}
\noindent \begin{center}
\begin{tabular}{>{\centering}m{2.5cm}>{\centering}m{1.2cm}>{\centering}m{1.2cm}>{\centering}m{1.2cm}>{\centering}m{1.2cm}>{\centering}m{1.2cm}>{\centering}m{1.2cm}}
\toprule 
\emph{Aspects} & \multicolumn{6}{c}{\emph{Authors}}\tabularnewline
\cmidrule{2-7} 
 & {\footnotesize{}Drobot (1953)} & {\footnotesize{}Quade (1961)} & {\footnotesize{}Whitney (1968)} & {\footnotesize{}Carlson (1979)} & {\footnotesize{}Jonsson (2014)} & {\footnotesize{}Raposo (2016)}\tabularnewline
\midrule
{\footnotesize{}Addition of quantities} & {\scriptsize{}Derived} & {\scriptsize{}Primitive} & {\scriptsize{}Derived} & {\scriptsize{}None} & {\scriptsize{}Derived} & {\scriptsize{}Primitive}\tabularnewline
{\footnotesize{}Scalar product }\\
{\footnotesize{}of quantities} & {\scriptsize{}Derived} & {\scriptsize{}Primitive} & {\scriptsize{}Derived} & {\scriptsize{}Derived} & {\scriptsize{}Primitive} & {\scriptsize{}Primitive}\tabularnewline
{\footnotesize{}Product of quantities} & {\scriptsize{}Primitive} & {\scriptsize{}Primitive} & {\scriptsize{}Primitive} & {\scriptsize{}Derived} & {\scriptsize{}Primitive} & {\scriptsize{}Primitive}\tabularnewline
{\footnotesize{}Product of dimensions } & {\scriptsize{}Derived} & {\scriptsize{}Derived} & {\scriptsize{}Derived} & {\scriptsize{}(Primitive)} & {\scriptsize{}Derived} & {\scriptsize{}Primitive}\tabularnewline
{\footnotesize{}Exponent of quantities} & {\scriptsize{}Primitive} & {\scriptsize{}Derived} & {\scriptsize{}Primitive} & {\scriptsize{}Derived} & {\scriptsize{}Derived} & {\scriptsize{}Derived}\tabularnewline
{\footnotesize{}Exponent of dimensions } & {\scriptsize{}Derived} & {\scriptsize{}Derived} & {\scriptsize{}Derived} & {\scriptsize{}(Primitive)} & {\scriptsize{}Derived} & {\scriptsize{}Derived}\tabularnewline
{\footnotesize{}Ring of exponents } & {\scriptsize{}$\mathbb{R}$} & {\scriptsize{}$\mathbb{Z}$} & {\scriptsize{}$\mathbb{Q}$ or $\mathbb{R}$} & {\scriptsize{}$\mathbb{Q}$} & {\scriptsize{}$\mathbb{Z}$} & {\scriptsize{}$\mathbb{Z}$}\tabularnewline
\bottomrule
\end{tabular} 
\par\end{center}
\end{singlespace}

{\scriptsize{}Note: For Carlson, we consider products and exponents
of pre-units instead of dimensions.}{\scriptsize \par}

\subsection{\label{a2}Rules of quantity calculus}

In \cite{BOER}, de Boer lists some fundamental rules for calculation
with quantities, drawn from the literature on quantity calculus. All
these rules can be derived from the theory of quantity spaces. The
list below includes references to de Boer's rules and relevant definitions
or results from this article.
\begin{enumerate}
\item (A2.1; Definition \ref{thm:def1}, Corollary \ref{cor35}). A quantity
can be multiplied by a quantity as in a monoid, and the non-zero quantities
form an abelian group under multiplication. 
\item (A2.2; Definition \ref{thm:def1}). A quantity $q$ can be multiplied
by a number $\lambda$, and we have the identities $1\cdot q=q$ and
$\alpha\cdot\left(\beta\cdot q\right)=\left(\alpha\beta\right)\cdot q$. 
\item (A2.2; Definition \ref{thm:def1}). Multiplication of quantities by
numbers and by quantities are related by $\lambda\cdot\left(pq\right)=\left(\lambda\cdot p\right)q=p\left(\lambda\cdot q\right)$.
\item (A3.1, A6.1; Definition \ref{d3.1}, Proposition \ref{s3.1}). A system
of quantities can be partitioned into equivalence classes of quantities
of the same kind. (de Boer makes a distinction between such equivalence
classes and dimensions, but in the present theory these two notions
coincide.)
\item (A3.2; Proposition \ref{p38-1}). Quantities of the same kind can
be added and form an abelian group under addition. 
\item (A3.3; Proposition \ref{p38-1}). If $p$ and $q$ are of the same
kind then $\lambda\cdot\left(p+q\right)=\lambda\cdot p+\lambda\cdot q$
and $\left(\alpha+\beta\right)\cdot q=\alpha\cdot q+\beta\cdot q$. 
\item (A3.4; Remark \ref{rem317}.) All ''dimensionless'' quantities are
of the same kind.
\item (A4.1, A6.2; Proposition \ref{314}). Kinds of quantities can be multiplied,
forming a finitely generated free abelian group under multiplication. 
\item (A4.2, A6.2; Proposition \ref{s3.2}, Definition \ref{def:214}).
If $q$ is a quantity of kind $K$ and $q'$ a quantity of kind $K'$
then $qq'$ is a quantity of kind $KK'$. 
\item (A5.1; Proposition \ref{preP39}). For every kind of quantities and
every quantity $q$ of this kind there is a quantity $u$ of the same
kind such that $q=\mu\cdot u$, where $\mu$ is a uniquely determined
number. Such a quantity $u$ is called a unit. 
\item (A5.1, A5.2; Proposition \ref{p26-1}). It is possible to select exactly
one unit from each kind of quantities in such a way that that if $u$
is the unit of kind $K$ and $u'$ is the unit of kind $K'$ then
$uu'$ is the unit of kind $KK'$. A set of units satisfying this
condition is said to be coherent.%
\end{enumerate}
We can derive some more fundamental rules not considered by de Boer
in \cite{BOER}.
\begin{enumerate}
\item[(i)] (Corollary \ref{c31}). $\lambda\cdot q$ is a quantity of the same
kind as $q$. 
\item[(ii)] (Proposition \ref{p310}). If $q$ is a quantity and $r,s$ are quantities
of the same kind then $q\left(r+s\right)=qr+qs$. 
\item[(iii)] (Corollary \ref{cor35}.) If $q$ and $r$ are non-zero quantities
then $qr$ is non-zero.
\end{enumerate}

\subsection{\label{a3}On applications of the theory of quantity spaces}

At the heart of theoretical metrology is the relationship between
measures and what is measured \textendash{} quantities. Quantity space
theory can clarify this relationship by elucidating the nature of
quantities and systems of quantities.

In the \emph{International Vocabulary of Metrology} from 2012 (VIM3)
\cite{VIM}, one sense of ''quantity'' (1.1) is a generic one, corresponding
to \emph{Grössenart} or kind of quantity, while a ''quantity value''
(1.19) represents a particular \emph{Grösse} or a quantity as defined
here. Unit quantities are called ''(measurement) units'' (1.9) in
VIM3. A set of ''base quantities'' (1.4) is in effect a set $\mathsf{E}$
of selected kinds of quantities, or equivalently dimensions, which
is a basis for some $Q/{\sim}$; the elements of a corresponding basis
$E=\left\{ e_{1},\ldots,e_{n}\right\} $ for $Q$ are ''base units''
(1.10). Further, in VIM3 a ''derived unit'' (1.11) is some $u\in Q$,
other than a base unit, with an expansion $u=\mu\cdot\prod_{i=1}^{n}e_{i}^{k_{i}}$,
where $\mu\neq0$, while a ''coherent derived unit'' (1.12) is a
derived unit $v$ with an expansion $v=1\cdot\prod_{i=1}^{n}e_{i}^{k_{i}}$.
Following Fourier \cite{FOUR}, VIM3 defines a ''quantity dimension''
(1.7) of a quantity $q\in Q$ in terms of an integer tuple $\left(k_{1},\ldots,k_{n}\right)$
describing how $q$ is expressed as $\mu_{E}\left(q\right)\cdot\prod_{i=1}^{n}e_{i}^{k_{i}}$,
where $\left\{ e_{1},\ldots,e{}_{n}\right\} $ is a quantity-space
basis for $Q$. 

It would be of interest to fully analyze VIM3 (or the upcoming VIM4)
in the light of the theory of quantity spaces%
. The distinction between concrete and abstract quantities \cite[p. 8--12]{JON3}
should be taken into account in this connection,

The relationship between measures and the quantities that they represent
is fundamental also in dimensional analysis, which is based on a principle
of covariance: \foreignlanguage{british}{a relation between scalars
representing a relation between quantities relative to a system of
unit quantities must continue to hold when that system is changed
in a legitimate way, although individual scalars may change as unit
quantities change. Reference \cite{JON4} presents an approach to
dimensional analysis explicitly based on this principle and expressed
in terms of quantities, dimensions and quantity functions rather than
scalars, units and scalar functions.}

A measure of a quantity is usually assumed to be a real number, but
in a quantity space over $K$ a measure is an element of $K$, where
$K$ is any field. For example, a measure can be a complex number.
This makes the present theory of quantity spaces well suited for applications
to problems of quantum physics.%


\begin{thebibliography}{10}
\bibitem[1]{BIRK}Birkhoff, G. (1935). On the structure of abstract
algebras. \emph{Proc. Cambridge Philos. Soc.} \textbf{31}, 433\textendash 454.

\bibitem[2]{BOER}de Boer, J. (1994). On the history of quantity calculus
and the international system, \emph{Metrologia} \textbf{31}, 405\textendash 429.

\bibitem[3]{CARL}Carlson, D.E. (1979). A mathematical theory of physical
units, dimensions and measures. \emph{Archive for Rational Mechanics
and Analysis}, 70, 289\textendash 304. 

\bibitem[4]{DESC}Descartes, R. (1637). La Geometrie, \emph{Discours
de la Méthode}. Leiden.

\bibitem[5]{DROB}Drobot, S. (1953). On the foundations of dimensional
analysis. \emph{Stud. Math.} \textbf{14}, 84\textendash 99.

\bibitem[6]{EUCL}Euclid (of Alexandria) (c 300 BC). \emph{Stoicheia
(The Elements)}.

\bibitem[7]{EUL}Euler, L. (1740). Einleitung zur Rechen-Kunst. Bd.
2. St Petersburg.

\bibitem[8]{FLEI}Fleischmann, R. (1951). Die Struktur des physikalischen
Begrippssystemes, \emph{Z. Phys}. \textbf{129}, 377\textendash 400.

\bibitem[9]{FOUR}Fourier, J.B.J. (1822). \emph{Théorie analytique
de la Chaleur}. Paris. 

\bibitem[10]{FOUR2}Fourier, J.B.J (2009). \emph{The Analytical Theory
of Heat} (translation of {[}7{]}). Cambridge University Press.

\bibitem[11]{GOV1}Gowers, W.T. Two definitions of 'definition'. \\
https://www.dpmms.cam.ac.uk/\textasciitilde{}wtg10/definition.html,
retrieved 25-09-2019.

\bibitem[12]{GOW2}Gowers W.T. (2002). \emph{Mathematics. A Very Short
Introduction}. Oxford University Press.

\bibitem[13]{HASS}Hasse, H. \& Scholz, H. (1928). Die Grundlagenkrisis
der griechischen Mathematik, \emph{Kant-Studien} \textbf{33}, 4-34.

\bibitem[14]{JON1}Jonsson, D. (2014). \emph{Quantities, Dimensions
and Dimensional Analysis.} arXiv:1408.5024 \textbf{{[}math.HO{]}.}

\bibitem[15]{JON3}Jonsson, D. (2019). \emph{Magnitudes Reborn: Quantity
Spaces as Scalable Monoids}. arXiv:1911.07236 \textbf{{[}math.RA{]}.}

\bibitem[16]{JON4}Jonsson, D. (2022). \emph{Theory and Application
of Augmented Dimensional Analysis. }arXiv:2211.04267\textbf{ {[}math-ph{]}}.

\bibitem[17]{KIT}Kitano, M. (2013). Mathematical structure of unit
systems. \emph{J. Math. Phys.} \textbf{54}, 052901.

\bibitem[18]{KOCK}Kock, A. (1989). Mathematical structure of physical
quantities. \emph{Archive for Rational Mechanics and Analysis}, 107,
99\textendash 104.

\bibitem[19]{LAN}Landolt, M. (1943). Grösse, Masszahl und Einheit.
Zürich. 

\bibitem[20]{LOD}Lodge, A. (1888). The multiplication and division
of concrete quantities. \emph{Nature} \textbf{38} 281\textendash 283.

\bibitem[21]{MAL}Malet, A. (2006). Renaissance notions of number
and magnitude, \emph{Hist. Math.} \textbf{33}, 63\textendash 81.

\bibitem[22]{MAXW}Maxwell, J. (1873). \emph{Treatise on Electricity
and Magnetism}. Oxford University Press.

\bibitem[23]{NEWT}Newton, I. (1720). \emph{Universal Arithmetick}.
London.

\bibitem[24]{QUAD} Quade, W. (1961). Über die algebraische Struktur
des Größenkalküls der Physik, \emph{Abhandlungen der Braunschweigischen
Wissenschaftlichen Gesellschaft} \textbf{13}, 24\textendash 65.

\bibitem[25]{RAP}Raposo, A.R. (2018). The algebraic structure of
quantity calculus, \emph{Measurement Science Review}\textbf{ 18},
147\textendash 157.

\bibitem[26]{RAP2}Raposo, A.R. (2021), Equivalence of two algebraic
structures underlying quantity calculus. Manuscript.

\bibitem[27]{RICH}Richman, F. (1988). Nontrivial uses of trivial
rings. \emph{Proc. Am. Math. Soc.} \textbf{103}, 1012\textendash 1014.

\bibitem[28]{STEV}Stevin (de Bruges), S. (1585). \emph{L'Arithmétique.
}Leiden\emph{.}

\bibitem[29]{WAL}Wallot, J. (1926). Dimensionen, Einheiten, Massysteme,
\emph{Handbuch der Physik II}. Springer.

\bibitem[30]{vdWAE}van der Waerden, B. L. (1930). Moderne Algebra.
Teil I. Springer.

\bibitem[31]{WHIT}Whitney, H. (1968). The mathematics of physical
quantities: Part II: Quantity structures and dimensional analysis.\emph{
Amer. Math. Monthly} \textbf{75}, 227\textendash{} 256.

\bibitem[32]{VIM} (2012). \emph{International Vocabulary of Metrology
\textendash{} Basic and General Concepts and Associated Terms (VIM).}
3rd edition. Bureau International des Poids et Mesures.

\bibitem[33]{SI}(2019) \emph{International System of Units (SI)}.
9th edition. Bureau International des Poids et Mesures.\newpage{}
\end{thebibliography}
\end{document}